\newtheorem{lemma}{Lemma}[section]
\newtheorem{proposition}[lemma]{Proposition}
\newtheorem{theorem}[lemma]{Theorem}
\newtheorem{corollary}[lemma]{Corollary}
\newtheorem{conjecture}[lemma]{Conjecture}
\theoremstyle{definition}
\newtheorem{remark}[lemma]{Remark}
\newtheorem{definition}[lemma]{Definition}
\newtheorem{example}[lemma]{Example}
\DeclareMathOperator{\id}{id}
\DeclareMathOperator{\ir}{ir}
\DeclareMathOperator{\im}{im}
\DeclareMathOperator{\Kh}{Kh}
\DeclareMathOperator{\CKh}{CKh}
\DeclareMathOperator{\rCKh}{\widetilde{CKh}}
\DeclareMathOperator{\rKh}{\widetilde{Kh}}
\DeclareMathOperator{\rCBLT}{\widetilde{C}_{BLT}}
\DeclareMathOperator{\BN}{BN}
\newcommand{\Z}{\mathbb{Z}}
\newcommand{\Q}{\mathbb{Q}}
\newcommand{\F}{\mathbb{F}}
\newcommand{\tomega}{\tilde{\omega}}
\newcommand{\talpha}{\tilde{\alpha}}
\newcommand{\tcal}[1]{\tilde{\mathcal{#1}}}
\newcommand{\tbar}[1]{\bar{\mathcal{#1}}}
\newcommand{\CBN}{C_{\BN}}
\newcommand{\Cob}{\mathcal{C}ob^{\Z[h]}_{\bullet/l}}
\newcommand{\cob}{\mathcal{C}ob_\bullet}
\newcommand{\plane}[2]{
\shade[color = gray!40, opacity = 0.3] (#1,#2-0.5) -- (#1+1,#2) -- (#1+1,#2+1.5) -- (#1,#2+1) -- (#1,#2-0.5);
\draw (#1,#2-0.5) -- (#1+1,#2) -- (#1+1,#2+1.5) -- (#1,#2+1) -- (#1,#2-0.5);
}
\newcommand{\sphere}[3]{
\shade[ball color = gray!40, opacity = 0.3] ({#1},{#2}) circle ({#3});
\draw (#1,#2) circle ({#3});
\draw (#1-#3,#2) arc (180:360:#3 and 0.3*#3);
\draw[dashed] (#1+#3,#2) arc (0:180:#3 and 0.3*#3);
}
\newcommand{\spheredot}[3]{
\sphere{#1}{#2}{#3}
\node at (#1-0.3*#3,#2+0.45*#3) [scale = 1.5*#3] {$\bullet$};
}
\newcommand{\bowl}[4]{
\shade[ball color = gray!40, opacity = 0.3] (#1+0.8*#3,#2) arc (0:180:0.4*#3 and -0.2*#3) -- (#1,#2-#4*#3) arc (180:360:0.4*#3 and 0.4*#3) -- (#1+0.8*#3,#2);
\draw (#1+0.8*#3,#2) arc (0:180:0.4*#3 and -0.2*#3) -- (#1,#2-#4*#3) arc (180:360:0.4*#3 and 0.4*#3) -- (#1+0.8*#3,#2);
\draw (#1,#2) arc (180:360:0.4*#3 and -0.2*#3);
\shade[ball color = gray!40, opacity = 0.15] (#1,#2) arc (180:360:0.4*#3 and -0.2*#3) arc (0:180:0.4*#3 and -0.2*#3);
}
\newcommand{\bowlud}[4]{
\shade[ball color = gray!40, opacity = 0.3] (#1,#2) arc (180:360:0.4*#3 and 0.2*#3) -- (#1+0.8*#3,#2+#4*#3) arc (0:180:0.4*#3 and 0.4*#3) -- (#1,#2);
\draw (#1,#2) arc (180:360:0.4*#3 and 0.2*#3) -- (#1+0.8*#3,#2+#4*#3) arc (0:180:0.4*#3 and 0.4*#3) -- (#1,#2);
\draw[dashed] (#1+0.8*#3,#2) arc (0:180:0.4*#3 and 0.2*#3);
}
\newcommand{\cylinder}[4]{
\shade[ball color = gray!40, opacity = 0.3] (#1,#2) arc (180:360:0.4*#3 and 0.2*#3) -- (#1+0.8*#3,#2+#4*#3) 
arc (0:180:0.4*#3 and -0.2*#3);
\shade[ball color = gray!40, opacity = 0.15] (#1,#2+#4*#3) arc (180:360:0.4*#3 and -0.2*#3) arc (0:180:0.4*#3 and -0.2*#3);
\draw (#1,#2) arc (180:360:0.4*#3 and 0.2*#3) -- (#1+0.8*#3,#2+#4*#3) 
arc (0:180:0.4*#3 and 0.2*#3) -- (#1,#2);
\draw[dashed] (#1+0.8*#3,#2) arc (0:180:0.4*#3 and 0.2*#3);
\draw (#1,#2+#4*#3) arc (180:360:0.4*#3 and 0.2*#3);
}
\newcommand{\drawblack}[4] {
\draw[->] (#1) -- node [above, scale = 0.7, #3] {$#4$} (#2);
}
\newcommand{\drawblackw}[4] {
\draw[-, line width = 6pt, color = white] (#1) -- (#2);
\draw[->] (#1) -- node [above, scale = 0.7, #3] {$#4$} (#2);
}
\newcommand{\drawgray}[2] {
\draw[->, color = lightgray] (#1) -- (#2);
}
\newcommand{\drawgrayw}[2] {
\draw[-, line width = 6pt, color = white] (#1) -- (#2);
\draw[->, color = lightgray] (#1) -- (#2);
}
\newcommand{\leftcl}[4]{
\draw[#4] (#1,#2) to [out = 270, in = 270] (#1 - 0.2 * #3, #2) -- ((#1 - 0.2 * #3, #2+#3) to [out = 90, in = 90] (#1, #2+#3);
}
\newcommand{\smomega}[5]{
\draw[#5] (#1,#2) -- (#1, #2+#4);
\draw[#5] (#1+0.5 * #3, #2) -- (#1+0.5 * #3, #2+#4);
\draw[#5] (#1+#3, #2) -- (#1+#3, #2+#4);
}
\newcommand{\smalpha}[5]{
\draw[#5] (#1,#2) -- (#1, #2 + 0.2 *#4) to [out = 90, in = 90] (#1+0.5*#3, #2 + 0.2 *#4) -- (#1+0.5*#3, #2);
\draw[#5] (#1, #2+#4) -- (#1, #2+ 0.8 * #4) to [out = 270, in = 270] (#1+0.5*#3, #2 + 0.8 *#4) -- (#1+0.5*#3, #2+#4);
\draw[#5] (#1+#3, #2) -- (#1+#3, #2+#4);
}
\newcommand{\smbeta}[5]{
\draw[#5] (#1,#2) -- (#1, #2+#4);
\draw[#5] (#1+0.5 * #3, #2) -- (#1+0.5 * #3, #2 + 0.2 *#4) to [out = 90, in = 90] (#1+#3, #2 + 0.2 *#4) -- (#1+#3, #2);
\draw[#5] (#1+0.5 * #3, #2+#4) -- (#1+0.5 * #3, #2+ 0.8 * #4) to [out = 270, in = 270] (#1+#3, #2 + 0.8 *#4) -- (#1+#3, #2+#4);
}
\newcommand{\smdelta}[5] {
\draw[#5] (#1, #2) -- (#1, #2+ 0.2 * #4) to [out = 90, in = 270] (#1+#3, #2+0.8 * #4) -- (#1+#3, #2+#4);
\draw[#5] (#1, #2+#4) -- (#1, #2+ 0.8 * #4) to [out = 270, in = 270] (#1+0.5*#3, #2 + 0.8 *#4) -- (#1+0.5*#3, #2+#4);
\draw[#5] (#1+0.5 * #3, #2) -- (#1+0.5 * #3, #2 + 0.2 *#4) to [out = 90, in = 90] (#1+#3, #2 + 0.2 *#4) -- (#1+#3, #2);
}
\newcommand{\smgamma}[5] {
\draw[#5] (#1, #2+#4) -- (#1, #2+0.8*#4) to [out = 270, in = 90] (#1+#3, #2+0.2 * #4) -- (#1+#3, #2);
\draw[#5] (#1,#2) -- (#1, #2 + 0.2 *#4) to [out = 90, in = 90] (#1+0.5*#3, #2 + 0.2 *#4) -- (#1+0.5*#3, #2);
\draw[#5] (#1+0.5 * #3, #2+#4) -- (#1+0.5 * #3, #2+ 0.8 * #4) to [out = 270, in = 270] (#1+#3, #2 + 0.8 *#4) -- (#1+#3, #2+#4);
}
\newcommand{\smeps}[5] {
\draw[#5] (#1+0.5*#3, #2+#4) -- (#1+0.5*#3, #2+0.8*#4) to [out = 270, in = 90] (#1+#3, #2+0.2 * #4) -- (#1+#3, #2);
\draw[#5] (#1,#2) -- (#1, #2 + 0.2 *#4) to [out = 90, in = 90] (#1+0.5*#3, #2 + 0.2 *#4) -- (#1+0.5*#3, #2);
}
\newcommand{\smepsd}[5] {
\draw[#5] (#1+0.5*#3, #2) -- (#1+0.5*#3, #2+ 0.2 * #4) to [out = 90, in = 270] (#1+#3, #2+0.8 * #4) -- (#1+#3, #2+#4);
\draw[#5] (#1, #2+#4) -- (#1, #2+ 0.8 * #4) to [out = 270, in = 270] (#1+0.5*#3, #2 + 0.8 *#4) -- (#1+0.5*#3, #2+#4);
}
\newcommand{\smzet}[5] {
\draw[#5] (#1, #2) -- (#1, #2+ 0.2 * #4) to [out = 90, in = 270] (#1+0.5*#3, #2+0.8 * #4) -- (#1+0.5*#3, #2+#4);
\draw[#5] (#1+0.5 * #3, #2) -- (#1+0.5 * #3, #2 + 0.2 *#4) to [out = 90, in = 90] (#1+#3, #2 + 0.2 *#4) -- (#1+#3, #2);
}
\newcommand{\smzetd}[5] {
\draw[#5] (#1, #2+#4) -- (#1, #2+0.8*#4) to [out = 270, in = 90] (#1+0.5*#3, #2+0.2 * #4) -- (#1+0.5*#3, #2);
\draw[#5] (#1+0.5 * #3, #2+#4) -- (#1+0.5 * #3, #2+ 0.8 * #4) to [out = 270, in = 270] (#1+#3, #2 + 0.8 *#4) -- (#1+#3, #2+#4);
}
\newcommand{\smcup}[4] {
\draw[#4] (#1, #2+0.7*#3) -- (#1, #2+0.5*#3) to [out = 270, in = 270] (#1+0.5*#3, #2+0.5*#3) -- (#1+0.5*#3, #2+0.7*#3);
}
\newcommand{\smcap}[4] {
\draw[#4] (#1,#2) -- (#1, #2 + 0.2 *#3) to [out = 90, in = 90] (#1+0.5*#3, #2 + 0.2 *#3) -- (#1+0.5*#3, #2);
}
\newcommand{\tmomega}[4]{
\draw[#4] (#1,#2) to [out = 90, in = 270] (#1+0.15*#3, #2+0.5*#3) to [out = 90, in = 270] (#1, #2+#3);
\draw[#4] (#1+#3, #2) to [out = 90, in = 270] (#1+0.85*#3, #2+0.5*#3) to [out = 90, in = 270] (#1+#3, #2+#3);
}
\newcommand{\tmalpha}[4]{
\draw[#4] (#1, #2+#3) -- (#1, #2+0.95*#3) to [out = 270, in = 270] (#1+#3, #2+0.95*#3) -- (#1+#3, #2+#3);
\draw[#4] (#1,#2) -- (#1, #2 + 0.05 *#3) to [out = 90, in = 90] (#1+#3, #2 + 0.05 *#3) -- (#1+#3, #2);
}
\newcommand{\dmomega}[4]{
\draw[#4] (#1,#2) to [out = 90, in = 270] (#1+0.15*#3, #2+0.5*#3) to [out = 90, in = 270] (#1, #2+#3);
}
\newcommand{\crossing}[4] {
\draw[#4](#1,#2) -- (#1+#3,#2+#3);
\draw[#4](#1+#3, #2) -- (#1+0.6 * #3, #2+0.4 * #3);
\draw[#4](#1,#2+#3) -- (#1+0.4 * #3, #2 + 0.6 * #3);
}
\newcommand{\mcrossing}[4] {
\draw[#4](#1+#3,#2) -- (#1,#2+#3);
\draw[#4](#1+#3, #2+#3) -- (#1+0.6 * #3, #2+0.6 * #3);
\draw[#4](#1,#2) -- (#1+0.4 * #3, #2 + 0.4 * #3);
}
\begin{document}
\parindent0em
\setlength\parskip{.1cm}
\thispagestyle{empty}
\title{On the Khovanov homology of 3-braids}
\author[Dirk Sch\"utz]{Dirk Sch\"utz}
\address{Department of Mathematical Sciences\\ Durham University\\ Durham DH1 3LE\\ United Kingdom}
\email{dirk.schuetz@durham.ac.uk}
\subjclass[2020]{primary: 57K18; secondary: 57K10}
\keywords{Khovanov homology, 3-braids}

\begin {abstract}
We prove the conjecture of Przytycki and Sazdanovi\'c that the Khovanov homology of the closure of a 3-stranded braid only contains torsion of order 2. This conjecture has been known for six out of seven classes in the Murasugi-classification of 3-braids and we show it for the remaining class. Our proof also works for the other classes and relies on Bar-Natan's version of Khovanov homology for tangles as well as his delooping and cancellation techniques, and the reduced integral Bar-Natan--Lee--Turner spectral sequence. We also show that the Knight-move conjecture holds for 3-braids.
\end{abstract}

\maketitle

\section{Introduction}
In the last two decades, Khovanov homology has become an indispensable tool in knot theory. Despite this, the occurrence of torsion is still very mysterious. For some classes of knots it is now understood that only torsion of order $2$ can appear, see \cite{MR4407084}, but other torsion orders occur, for example in torus knots, compare \cite{MR2320156}. Particularly for torus knots there seems to be a relation to the number of strands in the minimal braid representation.  

Based on computations, Przytycki and Sazdanovi\'c \cite{MR3205574} made several conjectures for torsion in the Khovanov homology of closures of braids. However, counterexamples have been found for most, see \cite{MR3894728, MR4477421}. The remaining conjecture involves $3$-braids and can be stated as follows.

\begin{conjecture}[Przytycki--Sazdanovi\'c \cite{MR3205574}]\label{con:przysazd}
The Khovanov homology of a closed $3$-braid can only have torsion of order $2$.
\end{conjecture}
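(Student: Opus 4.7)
The plan is to reduce Conjecture~\ref{con:przysazd} to the one remaining Murasugi class and to handle that class by a direct computation in Bar-Natan's tangle category. Every 3-braid is conjugate to a braid in one of seven Murasugi normal forms, namely a power of the full twist $\Delta^{2n}$ times a short word in $\sigma_1,\sigma_2^{\pm 1}$ together with a few degenerate cases. For six of these families the conjecture is already in the literature, so I would argue that the same circle of ideas settles the outstanding generic family $\Delta^{2n}\sigma_1^{p_1}\sigma_2^{-q_1}\cdots\sigma_1^{p_s}\sigma_2^{-q_s}$ with all $p_i,q_i>0$.

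The next step is to cut the braid open and view $\beta$ itself as a tangle in Bar-Natan's dotted cobordism category $\Cob$ modulo the local relations. The resulting complex $\CBN(\beta)$ lives in the subcategory generated by the handful of crossingless matchings on six boundary points, which is combinatorially tractable. I would compute $\CBN(\beta)$ inductively along the braid word: start from the identity tangle and tensor on each generator $\sigma_1^{\pm 1}$ or $\sigma_2^{\pm 1}$ in turn, after every step applying delooping (replacing a circle by its dotted/undotted direct-sum decomposition) followed by Gaussian elimination to cancel identity components. The key hope is that once one period $\sigma_1^{p}\sigma_2^{-q}$ and the full twist $\Delta^{2n}$ have been processed, the reduced complex stabilises into a recognisable form that is preserved, up to a predictable grading shift, by the addition of each further period.

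With such a normal form for $\CBN(\beta)$ in hand, I would close the tangle up and feed the simplified complex into the reduced integral Bar-Natan--Lee--Turner spectral sequence. Because the reduced BLT homology of any link is free abelian of a known rank, the $E_\infty$ page pins down the free part of $\widetilde{Kh}$ exactly, and any $p$-torsion in $\widetilde{Kh}$ must appear as the source or target of some BLT differential between surviving generators. The plan is then to read these differentials off the structural description and check that each nontrivial component is divisible only by $2$, so that the unreduced Khovanov homology inherits at most $2$-torsion. Passing from reduced to unreduced is standard once the reduced case is controlled.

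The main obstacle is that $n$ and the exponents $p_i,q_i$ are unbounded, so no finite computer check can suffice. The technical heart of the argument will therefore be a stability lemma asserting that appending one more period $\sigma_1^p\sigma_2^{-q}$ modifies the simplified complex only by a controlled direct summand of the same type, with any new BLT differentials again $2$-divisible on the nose. Establishing this stability precisely, handling the bookkeeping for arbitrary $n$ and $s$, and verifying the $h$-adic orders of the induced differentials is where I expect the real difficulty of the paper to lie; the spectral sequence and delooping/cancellation machinery themselves are by now standard tools.
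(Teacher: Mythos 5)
Your high-level plan --- reduce to the remaining Murasugi class, push the braid word through Bar-Natan's tangle complex with delooping and Gaussian elimination, find a stable normal form, and finish with the reduced integral BLT spectral sequence --- is essentially the strategy of the paper. Two points, however, are not merely bookkeeping but carry the real content, and your sketch either misstates them or treats them as standard when they are not.

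First, the passage from reduced to unreduced. You write that once the reduced case is controlled, passing to the unreduced is standard, and you suggest that torsion in $\rKh$ should be read off from BLT differentials that are ``divisible only by $2$.'' That is not how the mechanism works. The paper first shows that $\rKh(L;\Z)$ is entirely torsion-free (the $E_1$ page of the reduced BLT spectral sequence is free abelian), and also that the $E_2$ page is free abelian. Freeness of $E_1$ and $E_2$ forces the first BLT differential $d_1$ to have Smith Normal Form with only $0$'s and $1$'s. The actual factor of $2$ enters through the long exact sequence relating reduced and unreduced Khovanov homology: the connecting map $\beta$ satisfies $\beta = \pm 2 d_1$ (Proposition~\ref{prp:concludeconjectures}, citing \cite{MR4873797}), so the unreduced group is an extension whose torsion comes from the cokernel of a map with elementary divisors dividing $2$. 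Nothing in the BLT differentials themselves is ``$2$-divisible''; the $2$ lives in the comparison between the reduced and unreduced theories. Without this identification your argument does not bound torsion orders in $\Kh$, only constrains torsion in $\rKh$ --- which, in the proper alternating case, is already known to vanish for other reasons.

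Second, the role of the earlier six Murasugi sets. You take the previously published results for those sets as given and only need to treat $\Omega_6$. In fact the paper re-derives explicit Bar-Natan tangle complexes for the full-twist pieces $(ab)^{3k}$, $\mathcal{D}_{a^{-l}}$, and $\mathcal{D}_{b^l}$ precisely because they are the building blocks whose contracted tensor products feed into $\Omega_6$; the key contraction lemma ($\mathcal{B}\otimes\varepsilon\simeq 0\simeq\mathcal{B}\otimes\zeta$, and the truncated versions Lemma~\ref{lm:threektensor}) is what makes the ``stability'' happen, and is not available if you only cite the earlier torsion results as black boxes. So the claim that the six families are handled in the literature, while true for the torsion statement, misses that the present proof needs the structural decompositions, not just the conclusion.

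Finally, a smaller point: your stability ansatz --- appending one period $\sigma_1^p\sigma_2^{-q}$ modifies the simplified complex by a controlled direct summand --- is not what happens in the alternating part of $\Omega_6$. The paper instead establishes a short exact sequence of tangle complexes (Proposition~\ref{prp:niceproperalt}(5) and Proposition~\ref{prp:altBarNatan}) in which only the subquotient $\mathcal{D}_{a^{-n(w)}}$ is controlled explicitly, with the remaining finite free piece handled via the known thin reduced homology of non-split alternating links \cite{MR2509750}. Trying to prove a period-by-period direct-sum stability would likely overreach; the paper deliberately settles for a short exact sequence rather than a splitting.
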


Murasugi \cite{MR356023} listed seven sets with conjugacy classes for words in the braid group $B_3$. For the first four sets Conjecture \ref{con:przysazd} was shown to hold in \cite{MR4430925}, and for the next two sets it was shown to hold in \cite{kelo2024discrete}. This leaves only one set in the Murasugi classification, and we show that the conjecture also holds for those braids.

\begin{theorem}\label{thm:maintorsion}
{\em Conjecture \ref{con:przysazd}} is true.
\end{theorem}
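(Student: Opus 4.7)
The plan is to develop a unified framework, built from Bar-Natan's tangle category and his delooping/cancellation machinery, that computes the Khovanov homology of an arbitrary closed 3-braid sufficiently explicitly to read off its torsion. First, I would present any 3-braid $\beta\in B_3$ as a word in $\sigma_1^{\pm 1}$ and $\sigma_2^{\pm 1}$ and build its Bar-Natan complex $\CBN(\beta)$ in the category $\Cob$ of dotted cobordisms modulo the local relations; on a 3-braid this replaces the chain groups by formal sums of flat 3-tangles, of which there are only a handful up to planar isotopy.

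Second, I would apply Bar-Natan's delooping isomorphism to every closed circle appearing in a resolution and then iteratively Gaussian-cancel invertible scalar entries of the differential. Because the pool of underlying flat tangles is so small, I expect this to yield a relatively rigid simplified complex whose shape and differentials depend on the Murasugi class of $\beta$ via the exponents in its normal form, with complexity growing only linearly in the length of that normal form. I would then close the tangle to recover a complex computing $\CBN$ of the closure and, from it, $\rKh$.

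Third, to extract torsion information I would use the reduced integral Bar-Natan--Lee--Turner spectral sequence converging to $\rKh$. For a link, the $E_\infty$-page has a controlled shape, and since all differentials are $h$-linear perturbations of the Khovanov differential they interact transparently with the delooping/cancellation reduction carried out above. Showing that only 2-torsion occurs in $\rKh$ then amounts to showing that each higher differential on the reduced complex has cokernel annihilated by $2$; the long exact sequence relating $\rKh$ and $\Kh$ will upgrade this to the unreduced statement of Conjecture \ref{con:przysazd}.

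The main obstacle --- and the reason the other six Murasugi classes were already accessible --- is that the remaining class is the generic family of the form $\Delta^{2n}\sigma_1^{-p_1}\sigma_2^{q_1}\cdots\sigma_1^{-p_s}\sigma_2^{q_s}$ with variable length $s$ and variable exponents $p_i,q_i\geq 1$, so the simplified Bar-Natan complex has unbounded size and a single finite computation will not suffice. I expect the crux to be an induction on $s$, or equivalently a mapping-cone argument in which one appends another factor $\sigma_1^{-p}\sigma_2^{q}$ to the word and controls how the BLT differentials evolve. The delicate point will be to verify that no odd torsion can be introduced at any inductive stage; this is where the $\Z[h]$-linearity of $\Cob$, the explicit action of delooping on dotted cobordisms, and the exactness properties of $\tHBN$ must all conspire to force every nontrivial torsion element to be killed by $2$.
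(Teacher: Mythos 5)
Your plan matches the paper's strategy in every structural respect: Bar-Natan tangle complexes over $\Cob$, delooping and Gaussian elimination to produce small models depending on the Murasugi class, the reduced integral BLT spectral sequence, and the reduced--unreduced long exact sequence, with the hard class $\Omega_6$ handled by an induction on the alternating part of the word (after first analyzing the $(ab)^{3k}$ torus part). The one substantive imprecision is the criterion you propose at the spectral-sequence step: the paper does not argue that $\rKh$ merely has $2$-torsion, but that $\rKh$ is free abelian (as is the $E_2$-page of the reduced BLT spectral sequence), and then invokes the identity $\beta = \pm 2 d_1$ relating the connecting map $\beta$ of the long exact sequence to the first BLT differential $d_1$. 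Freeness of $\rKh$ is what makes the extension in the long exact sequence split and thereby forces only $2$-torsion in $\Kh$; if $\rKh$ itself carried $2$-torsion, a nonsplit extension could in principle produce $4$-torsion, so showing ``only $2$-torsion in $\rKh$'' as you phrase it is genuinely weaker than what the argument needs and what is actually proved.
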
 

Our techniques are different from the ones used in \cite{MR4430925} and \cite{kelo2024discrete}, they are mostly based on the delooping and cancellation techniques developed by Bar-Natan for tangle diagrams, see \cite{MR2174270, MR2320156}. Indeed, we have to apply these techniques to all of Murasugi's sets in order to derive the conjecture for the last set. Furthermore, for the first six sets we get a nice decomposition result for the Khovanov cochain complexes which can be used to simply read off the Khovanov homology for these links. For the last set we do not quite get such a nice decomposition, and we have to resort to additional techniques involving the reduced integral Bar-Natan--Lee--Turner spectral sequence.

We can also say something about the free part in the Khovanov homology, namely, we can prove the {\em Knight Move Conjecture} for closures of $3$-strand braids.

\begin{conjecture}[Knight Move Conjecture \cite{MR1917056}]\label{con:knight_move}
Given a knot $K$, its Khovanov homology over $\Q$ is the direct sum of a single \em pawn move \em piece
\[
q^{s-1}\Q \oplus q^{s+1}\Q
\]
for an even number $s$, and several \em knight move \em pieces
\[
u^iq^j \Q \oplus u^{i+1}q^{j+4}\Q
\]
for $i,j\in \Z$, $j$ odd.
\end{conjecture}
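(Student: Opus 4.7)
The plan is to deduce Conjecture \ref{con:knight_move} for closures of 3-braids from the same decomposition results that drive the proof of Theorem \ref{thm:maintorsion}. For a knot $K$, Conjecture \ref{con:knight_move} over $\Q$ is equivalent to the statement that the rational Bar--Natan homology $\HBN(K;\Q[h])$ splits as $\Q[h]^2$ (the pawn move, in homological degree zero at quantum gradings $s\pm 1$) plus a direct sum of $\Q[h]/(h)$-summands (one per knight move); equivalently, the Bar--Natan--Lee--Turner spectral sequence from $\Kh(K;\Q)$ to $\HL(K;\Q)\cong\Q^2$ collapses at its $E_2$-page. Since Theorem \ref{thm:maintorsion} ensures that all torsion in $\Kh$ is of order $2$, the rational Khovanov homology agrees with the free part of the integral one, so no universal-coefficient contribution obstructs the computation.

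For the first six of Murasugi's classes, the proof of Theorem \ref{thm:maintorsion} already produces, via the Bar-Natan delooping and cancellation techniques, a decomposition of $\CBN$ into elementary summands. Each summand is either a single free generator (together these furnish the $\Q[h]^2$ pawn-move part) or a length-$1$ complex on which multiplication by $h$ acts invertibly after inverting $h$ (a knight-move piece). For these six classes Conjecture \ref{con:knight_move} therefore follows immediately by collecting pieces.

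For the exceptional seventh class, the decomposition obtained in the course of Theorem \ref{thm:maintorsion} is not as clean, and here I would bring in the reduced integral Bar--Natan--Lee--Turner spectral sequence. The reduced Lee homology of a knot has rational rank $1$ while the unreduced has rank $2$, and combining the reduced and unreduced data pins down the rational $\HBN$ of the residual summand. Since the differential $d_r$ on the $E_r$-page shifts quantum grading by $4r$, matching the available bigradings in the residual piece against these rank constraints leaves no room for any $d_r$ with $r\geq 2$, and one again arrives at the desired knight-move decomposition.

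The main obstacle will be precisely this last step: without the clean direct-sum decomposition enjoyed by the other six classes, ruling out higher differentials on the residual piece requires a careful bigrading analysis tying together the reduced and unreduced Bar--Natan--Lee--Turner spectral sequences, and will likely rely on a case analysis keyed to the specific structure of the partial cancellation established in the proof of Theorem \ref{thm:maintorsion}.
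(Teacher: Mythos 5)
Your high-level plan---use the elementary decomposition of $\CBN$ for the first six Murasugi classes, and fall back on the reduced Bar-Natan--Lee--Turner spectral sequence for $\Omega_6$---does match the paper's structure (Corollary~\ref{cor:finalstep} for $\Omega_0,\dots,\Omega_5$; Proposition~\ref{prp:concludeconjectures} together with Corollary~\ref{cor:properaltcase} for $\Omega_6$). But the algebraic reduction you set up is off by a power of $h$, and carried out literally it would not close. A knight move does \emph{not} correspond to a $\Q[h]/(h)$-summand of $\HBN(K;\Q[h])$: such a summand in bidegree $(n,b)$ contributes a $\Q$ to $\Kh$ in $(n,b)$ and another in $(n-1,b-2)$, a $q$-shift of $2$, not $4$. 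The knight move corresponds to $h^2$-torsion. Concretely, the paper's $A(1)=\bigl[A\xrightarrow{\,2X-h\,}q^2A\bigr]$ has $\HBN(A(1);\Q[h])\cong\Q[h]/(h^2)$ in homological degree $1$, and $A(1)\otimes\Q$ is exactly one knight-move pair. Correspondingly, the BLT spectral sequence over $\Q$ (whose $d_r$ has bidegree $(1,2r)$) must collapse at $E_3$, not $E_2$, for the Knight Move Conjecture to hold; the step from $E_3$-collapse to Lee $E_2$-collapse is exactly what \cite{schuetz2024extortion} supplies inside Proposition~\ref{prp:concludeconjectures}. With the $\Q[h]/(h)$ criterion as you state it, the argument would not even apply to the decompositions the paper actually produces.

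Two further gaps. The criterion ``length-$1$ complex on which multiplication by $h$ acts invertibly after inverting $h$'' is satisfied by every $A(j)$ with $j\geq 1$, but only $A(1)$ and $A(2)$ yield knight moves: $A(3)\otimes\Q$ is supported in bidegrees $(0,\pm1),(1,5),(1,7)$, which cannot be partitioned into $(1,4)$-shifted pairs. The $j\leq 2$ restriction in Corollary~\ref{cor:finalstep} is essential, and establishing that the decompositions stay within $j\leq 2$ is the substantive content of Theorems~\ref{thm:khovhomtorus}, \ref{thm:decoomega5} and~\ref{thm:decoomega4}. Finally, for $\Omega_6$, ``matching bigradings against rank constraints'' is a placeholder rather than a proof: the argument in Corollary~\ref{cor:properaltcase} rests on the explicit short exact sequence of Theorem~\ref{thm:decoomega6}, which exhibits the reduced integral Khovanov homology as free abelian and supported on a single diagonal $j=2i+\text{const}$, apart from one extra class in odd homological degree when $k\geq 2$; it is precisely this thin support that kills all higher reduced BLT differentials by parity. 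Reduced-versus-unreduced rank counting by itself does not give you that structure.
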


The Knight Move Conjecture is known to be false \cite{MR4042864} in general, but it still holds for some classes of knots (such as quasi-alternating knots \cite{MR2509750} and knots with unknotting number less than $3$ \cite{MR3894208}), and most knots for which calculations have been done.

One can also state it over other fields, although counterexamples are then easier to come by. In characteristic $2$ one also allows pieces $u^iq^j\F_2\oplus u^{i+1}q^{j+2}\F_2$ to avoid triviality. It can be extended to links by allowing more pawn moves ($2^{c-1}$-many, if the link has $c$ components).

The Knight Move Conjecture holds whenever the Lee spectral sequence degenerates after the first page and this is a typical strategy to prove it. Indeed, for many knots, particularly with a small number of crossings, the second differential in the Lee spectral sequence $d_2$ of bidegree $(1, 8)$ is $0$ simply because at least one of the two groups $\Kh^{i,j}(K)$ and $\Kh^{i+1, j+8}(K)$ is always $0$. There exist $3$-braids for which both groups can be non-zero (in fact, one can get non-zero groups in bidegrees $(i,j)$ and $(i+1,j+4n)$ for arbitrarily large $n$), yet we show that the Lee spectral sequence collapses after the first page.

\begin{theorem}\label{thm:main_knight}
Let $\F$ be a field of characteristic different from $2$, and $L$ the closure of a $3$-braid.  Then the Lee differential $d_n$ for $L$ of bidegree $(1,4n)$ is $0$ for $n\geq 2$. In particular, the Knight Move Conjecture over $\F$ holds for $L$.
\end{theorem}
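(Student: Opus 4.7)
The plan is to reduce the theorem to the structural results established in the proof of Theorem \ref{thm:maintorsion}. Since $d_1 = 0$ and $\HL(L;\F) \cong \F^{2^c}$ with $c$ the number of components of $L$, collapse at $E_2$ immediately yields the Knight Move Conjecture via the standard pairing of bigraded generators surviving to $E_\infty$. Thus the task reduces to showing $d_n = 0$ for $n \geq 2$, which I would address case-by-case through Murasugi's seven conjugacy classes.

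For the first six Murasugi classes, the proof of Theorem \ref{thm:maintorsion} produces an explicit direct sum decomposition (up to chain homotopy) of the Bar-Natan cochain complex into a finite menu of small indecomposable pieces whose Khovanov homology is easy to enumerate. Within each such piece the nonzero bigradings lie in a narrow enough band that no differential of bidegree $(1,4n)$ with $n \geq 2$ can connect two nontrivial classes. A case check across the menu of summand types then establishes $d_n = 0$ for $n \geq 2$ on each piece, hence on the full complex. As noted in the introduction, a naive global bidegree argument is insufficient for $3$-braid closures, so it is essential that the vanishing is checked \emph{within} each summand of the decomposition rather than on the total homology directly.

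For the seventh Murasugi class no clean decomposition is available, and here I would argue via the reduced integral Bar-Natan--Lee--Turner spectral sequence already used in the proof of Theorem \ref{thm:maintorsion}. There one establishes that every nontrivial higher differential of the reduced integral BLT spectral sequence has image contained in the $2$-torsion subgroup of the current page. Consequently, after base change to a field $\F$ with $\chr \F \neq 2$, the reduced integral BLT spectral sequence collapses at $E_2$. A comparison between the reduced integral BLT and the unreduced Lee spectral sequences over $\F$---using that the BLT deformation with appropriate parameters specialises to Lee when $\chr \F \neq 2$, and that the unreduced theory differs from the reduced one only by contributions of disjoint unknotted components---then forces $d_n = 0$ in the unreduced Lee spectral sequence for $n \geq 2$.

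The main obstacle, and the only genuinely new ingredient beyond Theorem \ref{thm:maintorsion}, is the comparison step for the seventh class: one must verify that any hypothetical higher Lee differential in the unreduced theory is really detected (and hence obstructed) by the $E_2$-collapse of the reduced integral BLT spectral sequence. This amounts to checking that the free part of the unreduced theory contributes no extra unobstructed differentials, which may require a small additional chain-level argument specific to the braids in the seventh Murasugi class.
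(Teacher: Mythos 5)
Your degree-counting argument for the first six Murasugi classes is correct and in fact more elementary than the route taken in the paper. The decompositions in Theorems \ref{thm:khovhomtorus}, \ref{thm:decoomega5} and \ref{thm:decoomega4} into shifted copies of $A$, $A(1)$, $A(2)$ are chain homotopy equivalences over $\Z[h]$; specialising $h$ to a nonzero element of $\F$ with $\chr\F\neq 2$ produces a filtered homotopy equivalence of Lee complexes, so the unreduced Lee spectral sequence splits as the direct sum of the spectral sequences of the pieces. Each piece spans at most two consecutive homological degrees and a $q$-window of width at most $6$, so a bidegree $(1,4n)$ differential with $n\geq 2$ has nowhere to go; as you correctly stress, the decomposition is essential because the global bidegree range is unbounded. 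The paper instead verifies the hypotheses of Proposition \ref{prp:concludeconjectures} for $A$, $A(1)$, $A(2)$ and then passes through the reduced BLT spectral sequence and the comparison theorem of \cite{schuetz2024extortion}; your direct argument avoids that citation for these cases. (One small slip: with the paper's indexing $d_1$ has bidegree $(1,4)$ and is generically nonzero, so ``$d_1=0$'' is not what you mean; what matters, as you say next, is that $d_n=0$ for $n\geq 2$.)

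Your treatment of $\Omega_6$ does not work as written. Corollary \ref{cor:properaltcase} does \emph{not} say that the higher differentials of the reduced integral BLT spectral sequence land in $2$-torsion; it says that $E_1$, $E_2$ and $E_3=E_\infty$ are all free abelian. Consequently the reduced BLT spectral sequence over $\F$ collapses at page $3$, \emph{not} at page $2$: for $k\geq 2$ the isolated summand $E_2^{4k-3,s-4}\cong\Z$ must die, so $d_2\neq 0$. The passage from this $E_3$-collapse of the \emph{reduced} BLT spectral sequence over $\F$ (with $\chr\F\neq 2$) to $E_2$-collapse of the \emph{unreduced Lee} spectral sequence is not an issue of disjoint unknotted components; it is the substantive general theorem cited from \cite{schuetz2024extortion} and used inside Proposition \ref{prp:concludeconjectures}, and it holds for all links. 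The gap you flag at the end is therefore real, but you have mislocated it: it is not a small chain-level check specific to $\Omega_6$ braids, it is the external BLT-to-Lee comparison result that the paper relies on.
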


In characteristic $2$ we have the Bar-Natan--Turner differential $d_n$ of bidegree $(1,2n)$, and it is $0$ for $n\geq 3$. In particular, the appropriately adapted Knight Move Conjecture for $\F_2$ also holds for $3$-braids.

\section{Generalities on 3-braids}

A word $w$ in letters $\{a,a^{-1},b,b^{-1}\}$ gives rise to a tangle diagram $T_w$ by using the diagrams in Figure \ref{fig:basic_tangle} and stacking them on top of each other. We note that we picture the resulting braid to move from bottom to top, with $a$ and $b$ leading to positive crossings (and $a^{-1}$, $b^{-1}$ leading to negative crossings) when all strands are oriented in the same direction. This agrees with \cite{MR356023}, if we identify $a$ with $\sigma_1^{-1}$ and $b$ with $\sigma_2^{-1}$.

\begin{figure}[ht]
\begin{tikzpicture}
\crossing{-6}{0.6}{0.6}{very thick}
\draw[very thick] (-4.8,0.6) -- (-4.8,1.2);
\draw[very thick] (-3, 0.6) -- (-3, 1.2);
\crossing{-2.4}{0.6}{0.6}{very thick}
\crossing{0}{0}{0.6}{very thick}
\draw[very thick] (1.2,0) -- (1.2,0.6);
\crossing{0.6}{0.6}{0.6}{very thick}
\mcrossing{0.6}{1.2}{0.6}{very thick}
\draw[very thick] (0,0.6) -- (0, 1.8);
\end{tikzpicture}
\caption{\label{fig:basic_tangle}The tangles for $a$, $b$, and $abb^{-1}$.}
\end{figure}

Two such words $w,w'$ represent the same tangle if and only if they represent the same element in the braid group $B_3 = \langle\, a,b\mid aba = bab\,\rangle$. Furthermore, conjugate elements  represent the same link.

Murasugi \cite[Prop.2.1]{MR356023} gave normal forms for the conjugacy classes in $B_3$, in the form of seven sets. To describe these sets, let us first introduce the notion of an alternating word.

\begin{definition}
A word $w$ in $\{a^{-1}, b\}$ is called an {\em alternating word}. If it starts with $a^{-1}$ and ends with $b$, it is called a {\em proper alternating word}.
\end{definition}

It is easy to see that alternating words give rise to alternating links, although if we use at most one letter, we get a split link with at least one unknot component. We could also use words in the letters $a$ and $b^{-1}$ to get alternating links, but these are conjugate to alternating words. 

Murasugi's normal forms for conjugacy classes are now given by
\begin{align*}
\Omega_0 &= \{ (ab)^{3k}\mid k\in \Z\},\\
\Omega_1 &= \{ (ab)^{3k+1}\mid k\in \Z\},\\
\Omega_2 &= \{ (ab)^{3k+2}\mid k\in \Z\},\\
\Omega_3 &= \{ (ab)^{3k+1}a\mid k\in \Z\},\\
\Omega_4 &= \{ (ab)^{3k} a^{-l}\mid k,l\in \Z, l>0\},\\
\Omega_5 &= \{ (ab)^{3k} b^l \mid k,l\in \Z, l>0\},\\
\Omega_6 &= \{(ab)^{3k} w \mid k\in \Z, w \mbox{ a proper alternating word.}\}.
\end{align*}

Inverting these normal forms keep $\Omega_0$, $\Omega_3$, and $\Omega_6$ invariant, while flipping $\Omega_1$ with $\Omega_2$, and $\Omega_4$ with $\Omega_5$. Since Khovanov homology behaves well with respect to mirroring, we will only be considering the cases $k\geq 1$ and write $\Omega^+_i$ for the corresponding sets. The cases with $k=0$ are mostly trivial and will be treated separately.

\section{The Bar-Natan complex for links and tangles}

A (commutative) \em Frobenius system \em is a tuple $\mathcal{F}=(R,A,\varepsilon, \Delta)$ with $A$ a commutative ring and a subring $R$, $\varepsilon\colon A\to R$ an $R$-module map, $\Delta\colon A\to A\otimes_R A$ an $A$-bimodule map that is co-associative and co-commutative, such that $(\varepsilon\otimes \id)\circ \Delta = \id$.

Given a Frobenius system $\mathcal{F}= (R,A,\varepsilon,\Delta)$ such that $A$ is free of rank $2$ over $R$, Khovanov  \cite{MR2232858} showed that for a link diagram $D$ one can define a cochain complex $C(D;\mathcal{F})$ over $R$ whose homology is a link invariant.

Khovanov homology can now be defined using the Frobenius system $\mathcal{F}_{\Kh}$ where $R=\Z$, $A=\Z[X]/(X^2)$, $\varepsilon\colon \Z[X]/(X^2)\to \Z$ sends $1$ to $0$ and $X$ to $1$, and $\Delta(1) = 1\otimes X+X\otimes 1$. The ring $\Z[X]/(X^2)$ has a grading defined by $|1| = 0$ and $|X| = -2$. This grading can be used to give the \em Khovanov complex of $D$\em, $\CKh(D;\Z) = C(D;\mathcal{F}_{\Kh})$ a second grading, called the $q$-grading. We sometimes write $\CKh^{i,j}(D;\Z)$ with $i$ referring to the homological grading and $j$ to the $q$-grading.

Another Frobenius system $\mathcal{F}_{\mathrm{BN}}$, named after Bar-Natan who considered it in characteristic 2 \cite{MR2174270}, is given by $R=\Z[h]$, $A= \Z[X,h]/(X^2-Xh)$, $\varepsilon(1) = 0$, $\varepsilon(X) = 1$, and $\Delta(1) = X\otimes 1 + 1 \otimes X - h\otimes 1$. Again, $\Z[X,h]/(X^2-Xh)$ can be graded using $|1| = 0$, $|h| = -2 = |X|$, leading to a bigraded cochain complex $\CBN(D;\Z[h]) = C(D;\mathcal{F}_{\mathrm{BN}})$.

The Khovanov complex can be recovered from the Bar-Natan complex using the ring homomorphism $\eta\colon \Z[h]\to \Z$ which sends $h$ to $0$. That is,
\[
\CKh(D;\Z) = \CBN(D;\Z[h])\otimes_{\Z[h]}\Z,
\]
where $h$ acts as $0$ on $\Z$. We denote Khovanov homology by $\Kh^{i,j}(L;\Z)$ with $i$ the homological degree and $j$ the $q$-degree.

If the link diagram has a chosen basepoint, the Bar-Natan complex $\CBN(D;\Z[h])$ has an $A$-action corresponding to multiplication by $X$ on the copy of $A$ corresponding to the basepoint. We can think of $\CBN(D;\Z[h])$ as a cochain complex over the category $\mathfrak{Mod}^q_{\Z[X,h]/(X^2-Xh)}$ of finitely generated free graded modules over $\Z[X,h]/(X^2-Xh)$.

If $M$ is an object of $\mathfrak{Mod}^q_{\Z[X,h]/(X^2-Xh)}$, that is, a finitely generated free graded $\Z[X,h]/(X^2-Xh)$-module, and $j\in \Z$, let $q^j M$ be the object of $\mathfrak{Mod}^q_{\Z[X,h]/(X^2-Xh)}$ whose underlying module is the same as $M$, but with a grading shift that adds $j$ to the $q$-grading. It will be convenient for us to let $A= q\Z[X,h]/(X^2-Xh)$, that is, $|1| = 1$, and $|h| = -1 = |X|$, when viewed as elements of $A$.

For a link diagram $D$ with a basepoint we now define the reduced Khovanov complex as
\[
\rCKh(D;\Z) = q^{-1}\CBN(D;\Z[h])\otimes_{\Z[X,h]/(X^2-Xh)} \Z,
\]
where $X$ and $h$ act on $\Z$ as $0$. The resulting homology is again bigraded and a link invariant, denoted by $\rKh^{i,j}(L;\Z)$.

If we let $h$ act on $\Z$ as $1$ and $X$ as $0$, we get another complex, the {\em reduced Bar-Natan--Lee--Turner complex} $\rCBLT(D;\Z)$. This complex is no longer graded, but it has a filtration
\[
0 \subset \cdots \subset F_{2j} \subset F_{2j-2} \subset \cdots \subset \rCBLT(D;\Z),
\]
such that $F_{2j}/F_{2j+2} \cong \rCKh^{\ast, 2j}(D;\Z)$. In particular, there is a spectral sequence, the reduced BLT-spectral sequence $E^{i,j}_k$ with $E^{i,j}_1 = \rKh^{i,j}(L;\Z)$ which converges to the cohomology of $\rCBLT(L;\Z)$.

Our proof of Conjecture \ref{con:przysazd} and Conjecture \ref{con:knight_move} relies on the following proposition.

\begin{proposition}\label{prp:concludeconjectures}
Let $L$ be a link such that for the reduced BLT-spectral sequence we have that $E^{i,j}_k$ is free abelian for $k = 1,2$ and all $i,j\in \Z$. Then $\Kh^{i,j}(L;\Z)$ has only torsion of order $2$.  Furthermore, if $E^{i,j}_3 = E^{i,j}_\infty$ is also free abelian, the Lee spectral sequence collapses at page $2$ and $L$ satisfies the Knight Move Conjecture in every characteristic.
\end{proposition}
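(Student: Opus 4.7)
My plan splits the proposition into two parts, first controlling the $2$-torsion of $\Kh(L;\Z)$ and then deriving collapse of the Lee spectral sequence. Both parts proceed by using freeness of the $E_k$-pages to pass between integral and field coefficients via universal coefficients, and then comparing the reduced BLT spectral sequence either with the Bockstein spectral sequence on $\rKh(L;\F_2)$ or with the reduced Lee spectral sequence over $\Q$.

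For the first claim, I would identify the integral differential $d_1$ of the reduced BLT spectral sequence, which has bidegree $(1,2)$, with multiplication by $h$ at the chain level of $\tCBN$. Its reduction modulo $2$ then agrees with the Bockstein $\beta$ on $\rKh(L;\F_2)$ used by Shumakovitch to detect the $2$-torsion orders of $\Kh(L;\Z)$. Freeness of $E_1$ and universal coefficients give $\rKh(L;\F_2) = E_1\otimes\F_2$ without any Tor correction; freeness of $E_2$ then yields that the mod-$2$ homology of $d_1$ is exactly $E_2\otimes\F_2$. Hence the Bockstein spectral sequence on $\rKh(L;\F_2)$ collapses at its second page, and Shumakovitch's analysis then shows that every $2$-torsion element of $\Kh(L;\Z)$ has order exactly $2$.

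For the Knight Move assertion I would pass to $\Q$-coefficients. Freeness of $E_1$, $E_2$ and $E_3=E_\infty$ implies, by universal coefficients, that the $\Q$-version of the reduced BLT spectral sequence is $E_k\otimes\Q$ throughout and still collapses at page $3$. Over $\Q$ the algebra $\Q[X,h]/(X^2-Xh)$ is semisimple via the substitution $Y=X-h/2$, so the reduced BLT complex over $\Q$ is (a shifted copy of) the reduced Lee complex. The BLT filtration has $q$-steps of size $2$ while the Lee filtration has steps of size $4$, which forces BLT differentials on odd pages to vanish on parity grounds and matches BLT $d_{2n}$ with reduced Lee $d_n$. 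Consequently the BLT collapse at $E_3$ translates into collapse of reduced Lee at $E_2$, and hence of the unreduced Lee spectral sequence, which gives the Knight Move Conjecture in every characteristic different from $2$. In characteristic $2$ one runs the analogous argument with the Bar-Natan--Turner spectral sequence in place of Lee.

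The main obstacle I anticipate is the identification of BLT $d_1$ with the Bockstein used by Shumakovitch: this requires carefully tracking the $h$-filtration through a short exact sequence of the form $0\to \tCBN\xrightarrow{h-1}\tCBN\to\rCBLT\to 0$ (or its mod-$2$ reduction) and matching the resulting connecting homomorphism with the $X$-action on $\rKh(L;\F_2)$ appearing in Shumakovitch's analysis of integral $2$-torsion. The Knight Move portion should then be mostly bookkeeping once this BLT-to-Bockstein and BLT-to-Lee dictionary is in place.
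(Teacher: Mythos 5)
Your torsion argument takes a genuinely different route from the paper, but as written it has two real gaps, one of which is an actual error. First, the mod-$2$ reduction of the BLT differential $d_1$ is \emph{not} the Bockstein $\beta$: they do not even have the same bidegree ($d_1$ has $q$-degree $2$, the Bockstein has $q$-degree $0$). The mod-$2$ reduction of $d_1$ is the Bar-Natan/Turner differential $\nu$. Shumakovitch's detection of $2$-torsion does not proceed by identifying $\nu$ with $\beta$, but by the algebraic relation $\nu\beta + \beta\nu = \id$ on reduced $\F_2$-Khovanov homology, together with an acyclicity hypothesis; so the claim that collapse of the BLT spectral sequence at $E_2$ forces collapse of the Bockstein spectral sequence does not follow by the route you propose. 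Second, even if this identification were repaired, a Bockstein argument only controls $2$-power torsion, whereas the proposition asserts that \emph{all} torsion has order $2$. You would still need to rule out $p$-torsion for odd $p$ (which does follow from freeness of $E_1 = \rKh(L;\Z)$ plus Shumakovitch's $\Z[1/2]$-splitting $\Kh \cong q\,\rKh\oplus q^{-1}\rKh$, but you never invoke that).

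The paper's proof is shorter and avoids both issues. It uses the long exact sequence relating reduced and unreduced Khovanov homology, with connecting map $\beta\colon \rKh^{i-1,j}(L;\Z)\to\rKh^{i,j+2}(L;\Z)$. Freeness of $E_1 = \rKh(L;\Z)$ splits this sequence, so $\Kh^{i,j+1}(L;\Z)\cong \coker\beta \oplus \ker\beta$. The key cited lemma is that $\beta = \pm 2 d_1$, where $d_1$ is the integral BLT differential. Freeness of $E_1$ and $E_2$ forces $d_1$ to have Smith normal form with entries $0$ and $1$, hence $\coker\beta = \coker(2d_1)$ has only $\Z/2$-torsion, and $\ker\beta$ is free. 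This handles all primes simultaneously with no Bockstein machinery. Your Knight Move portion is closer in spirit to the paper's (both pass from $\Z$ to arbitrary fields via freeness of all pages and then invoke the BLT-to-Lee degeneration implication, which the paper outsources to a citation), and the $Y = X - h/2$ substitution and $q$-step parity remark are a reasonable heuristic for why that implication should hold; you would just want to phrase it over an arbitrary field of odd characteristic rather than only over $\Q$.
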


\begin{proof}
Let $d_1\colon E^{i,j}_1\to E^{i+1,j+2}_1$ be the boundary homomorphism that calculates $E^{i,j}_2$. Since all $E_1^{i,j}$ are free abelian, we can find a basis of them so that $d_1$ is represented by a matrix in Smith Normal form. Furthermore, since all of the $E^{i,j}_2$ are free abelian, the Smith Normal form only has entries $0$ and $1$.

Consider the long exact sequence relating reduced and unreduced Khovanov homology
\[
\cdots \longrightarrow \rKh^{i-1,j}(L;\Z) \stackrel{\beta}{\longrightarrow} \rKh^{i,j+2}(L;\Z) \longrightarrow \Kh^{i,j+1}(L;\Z) \longrightarrow \rKh^{i,j}(L;\Z) \stackrel{\beta}{\longrightarrow}\cdots
\]
Since the reduced Khovanov homology is free abelian, we get
\[
\Kh^{i,j+1}(L;\Z) \cong \rKh^{i,j+2}(L;\Z)/\im \beta \oplus \ker \beta.
\]
By \cite[Lm.5.6]{MR4873797} $\beta = \pm 2d_1$, so $\Kh^{i,j+1}(L;\Z)$ only has torsion of order $2$.

Since all the pages in the integral reduced BLT-spectral sequence are free abelian, we also get that the reduced BLT-spectral sequence over any field collapses at page $3$. In characteristic $2$ this implies the modified Knight Move Conjecture for this characteristic. In characteristic different from $2$ this implies that the Lee spectral sequence collapses at page 2 by \cite{schuetz2024extortion}, which in turn implies the Knight Move Conjecture for this characteristic.
\end{proof}

If $C$ is a cochain complex over $\mathfrak{Mod}^q_{\Z[X,h]/(X^2-Xh)}$, and $i,j\in \Z$, we write $u^iq^jC$ for the cochain complex over $\mathfrak{Mod}^q_{\Z[X,h]/(X^2-Xh)}$ such that $u^iq^j C^{u,v} = C^{u-i,v-j}$. 

\begin{definition}
Let $j\geq 1$. The cochain complex $A(j)$ over $\Z[X,h]/(X^2-Xh)$ is defined by
\[
A(j)^0 = A, \hspace{0.5cm}A(j)^1 = q^{2j}A, \hspace{0.5cm} \partial(a) = (2X-h)^j\cdot a.
\]
and $0$ in all other homological degrees.
\end{definition}

It is worth noting that for $j = 2i$ we have $(2X-h)^j = h^{2i}$. If we work over a field $\F$ instead of $\Z$, the Bar-Natan complex $\CBN(D;\F[h])$ decomposes into a direct sum of suitably shifted copies of $A(j)\otimes\F$ and copies of $A\otimes\F$, see \cite{schuetz2024extortion}. Over $\Z$ this is too much too expect in general, but we will see that this holds with $j\leq 2$ for all braid words in $\Omega_i$ with $i\leq 5$. It is an interesting question whether this also holds for $i=6$.

\begin{corollary}\label{cor:finalstep}
Let $L$ be a link with diagram $D$ such that $\CBN(D;\Z[h])$ decomposes into a direct sum of cochain complexes, each of which is either a shifted single copy of $A$, or a shifted copy of $A(j)$ with $j=1$ or $2$. Then $L$ only has torsion of order $2$, the Lee spectral sequence collapses at page $2$, and $L$ satisfies the Knight Move Conjecture in every characteristic.
\end{corollary}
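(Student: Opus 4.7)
The plan is to apply Proposition~\ref{prp:concludeconjectures}, so it suffices to show that the pages $E^{i,j}_k$ of the reduced BLT-spectral sequence of $L$ are free abelian for $k=1,2$ and that $E^{i,j}_3 = E^{i,j}_\infty$ is also free abelian.

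First I will observe that the given decomposition of $\CBN(D;\Z[h])$ is one of bigraded cochain complexes, so it descends to a decomposition of $\rCBLT(D;\Z)$ as a direct sum of filtered cochain complexes, the filtration on each summand being inherited from the $q$-grading of the corresponding summand of $\CBN$. The associated spectral sequence therefore splits as a direct sum of spectral sequences, and it is enough to verify the freeness and collapse claims summand by summand.

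A shifted copy of $A$ reduces to a single copy of $\Z$ in one bidegree in both $\rCKh$ and $\rCBLT$, so every $E_k$-term coming from such a summand is just $\Z$. For a shifted $A(j)$ with $j\in\{1,2\}$, the differential $(2X-h)^j$ becomes $0$ after setting $X=h=0$, while it becomes $(-1)^j=\pm 1$ after setting $X=0$ and $h=1$. Hence such a summand contributes $\Z\oplus\Z$ to $\rKh=E_1$ at bidegrees $(i_0,j_0)$ and $(i_0+1,j_0+2j)$, but contributes $0$ to the cohomology of $\rCBLT$. Since $d_k$ has bidegree $(1,2k)$, the only page on which the two surviving generators can be linked is $E_j$. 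For $j=1$ the map $d_1$ is therefore an isomorphism and $E_2=E_\infty=0$, while for $j=2$ we get $d_1=0$, so $E_2=\Z\oplus\Z$ is free, $d_2$ is an isomorphism, and $E_3=E_\infty=0$. In every case the pages $E_1,E_2,E_3$ are free abelian and $E_3=E_\infty$.

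Summing over all summands yields precisely the hypotheses of Proposition~\ref{prp:concludeconjectures}, from which the three conclusions follow. The only point that needs care is the decomposition of the spectral sequence itself; this is also where the bound $j\le 2$ enters, since it is exactly what ensures that the $A(j)$-summands are annihilated by page $3$ and hence that $E_3=E_\infty$ summand-wise. Once this bookkeeping is in place there is nothing further to compute.
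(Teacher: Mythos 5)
Your proof is correct and takes essentially the same approach as the paper: the paper's one-line proof simply asserts that the hypotheses of Proposition~\ref{prp:concludeconjectures} can be verified summand by summand for $A$, $A(1)$, $A(2)$, and your write-up carries out exactly that verification (splitting the filtered complex, computing $E_1$ from $\rCKh$ and the abutment from $\rCBLT$, and noting that the bound $j\le 2$ forces $E_3=E_\infty$).
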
 

\begin{proof}
It is easy to see that the conditions of Proposition \ref{prp:concludeconjectures} are satisfied for the complexes $A$, $A(1)$ and $A(2)$.
\end{proof}

Now assume we have a tangle diagram $T$ in a planar disc $D$ with $2k$ marked endpoints on $\partial D$. In \cite{MR2174270}, Bar-Natan constructs a cochain complex $\Kh(T)$ over an appropriate additive category which is a tangle invariant up to chain homotopy. There are in fact several variations and we will describe the one suitable for our considerations.

Let $D_{2k}$ be a disc with $2k$ points on its boundary. We denote by $\cob(D_{2k})$ the category whose objects are pairs $(S,j)$, where $S$ is a smooth compact $1$-manifold embedded in $D$ such that $S$ intersects $\partial D$ transversally in $\partial S$, and this intersection agrees with the $2k$ points. Also, $j\in \Z$. Morphisms between $(S_0,j_0)$ and $(S_1,j_1)$ are oriented cobordisms $C$ embedded in $D_{2k}\times [0,1]$ which admit finitely many dots and which fix the $2k$ points on $\partial D_{2k}$, up to boundary preserving isotopy. We require $C\cap D_{2k}\times \{i\} = S_i$ for $i=0,1$, and 
\[
\chi(C) - \chi(S_0) = j_0 - j_1 + 2\# \{\mbox{dots on }C\}.
\]
We now denote by $\Cob(D_{2k})$ the additive category whose objects are finitely generated based free $\Z[h]$-modules where basis elements are objects $(S,j)$ from $\cob(D_{2k})$. Morphisms are given by matrices $(M_{nm})$ with each matrix entry $M_{nm}$ an element of the free $\Z[h]$-module generated by the morphism set between objects $(S_n,j_n)$ and $(S_m,j_m)$ in $\cob(D_{2k})$, modulo the following local relations:
\begin{equation}\label{eq:localrels}
\begin{tikzpicture}[baseline={([yshift=-.5ex]current bounding box.center)}]
\sphere{0}{0}{0.5}
\node at (1.1,0) {$= 0$,};
\spheredot{3}{0}{0.5}
\node at (4.1,0) {$= 1$,};
\plane{5}{-0.5}
\plane{7.1}{-0.5}
\node at (5.5,0.2) [scale = 0.75] {$\bullet$};
\node at (5.5,-0.2) [scale = 0.75] {$\bullet$};
\node at (7.6,0) [scale = 0.75] {$\bullet$};
\node at (6.6,0) {$= h$};
\end{tikzpicture}
\end{equation}
and
\begin{equation}\label{eq:localrelb}
\begin{tikzpicture}[baseline={([yshift=-.5ex]current bounding box.center)}]
\cylinder{0}{0}{1}{1.5}
\node at (1.25,0.75) {$=$};
\bowl{1.6}{1.5}{1}{0.1}
\bowlud{1.6}{0}{1}{0.1}
\node at (2,0.1) [scale = 0.75] {$\bullet$};
\node at (2.85,0.75) {$+$};
\bowl{3.2}{1.5}{1}{0.1}
\bowlud{3.2}{0}{1}{0.1}
\node at (3.6,1.2) [scale = 0.75] {$\bullet$};
\node at (4.45,0.75) {$-\,h$};
\bowl{4.8}{1.5}{1}{0.1}
\bowlud{4.8}{0}{1}{0.1}
\end{tikzpicture}
\end{equation}
Here multiplication by $h$ has the same effect on the $q$-grading as a dot.

Bar Natan \cite{MR2174270} showed that these categories have excellent gluing properties. Since we are only interested in tangles arising from braids, we do not need the full power of their gluing behaviour, and instead we settle for a simpler version.

Let $n,m$ be non-negative integers with $n+m$ even, and let $D^n_m$ be a rectangle with $n$ points on the top boundary and $m$ points on the bottom boundary. Bar-Natan \cite{MR2174270} describes the gluing operation as a functor 
\[
D\colon \Cob(D^n_m)\times \Cob(D^p_n)\to \Cob(D^p_m)
\]
which we are going to write as a tensor product. Furthermore, there are closing operations
\[
C_L, C_R \colon \Cob(D^n_m)\to \Cob(D^{n-1}_{m-1}),
\]
which can be described as tensor product with an object that is an interval between the two left-most points (in case of $C_L$) or the two right-most points (in case of $C_R$) on the top and bottom of $D^n_m$.

Recall the tangle diagrams $T_a$, $T_b$ for the letters $a$ and $b$ from Figure \ref{fig:basic_tangle}. The {\em Bar-Natan complexes} $\CBN(T_a;\Z[h])$ and $\CBN(T_b;\Z[h])$ are then given by the cochain complexes over $\Cob(D^3_3)$ given as
\[
\begin{tikzpicture}
\node at (0,1) {$\CBN(T_a;\Z[h])=q$};
\smomega{1.6}{0.7}{0.6}{0.6}{very thick}
\drawblack{2.4,1}{3,1}{}{S}
\node at (3.4,1) {$q^2$};
\smalpha{3.7}{0.7}{0.6}{0.6}{very thick}
\node at (5.9,1) {, $\CBN(T_b;\Z[h])=q$};
\smomega{7.6}{0.7}{0.6}{0.6}{very thick}
\drawblack{8.4,1}{9,1}{}{S}
\node at (9.4,1) {$q^2$};
\smbeta{9.7}{0.7}{0.6}{0.6}{very thick}
\end{tikzpicture}
\]
where we write $q^jS$ for the object $(S,j)$, and the morphism $S$ is the standard saddle cobordism. The objects are in homological degrees $0$ and $1$. The complexes for $T_{a^{-1}}$ and $T_{b^{-1}}$ are obtained by dualizing, which involves negating homological and $q$-degrees.

The general Bar-Natan complex $\CBN(T_w;\Z[h])$ for a braid word $w$ is then obtained from these four basic complexes using the tensor product operation of Bar-Natan.

Closed circles in a smoothing $S$ can be removed using the {\em Delooping} operation, see Bar-Natan \cite{MR2320156} and Naot \cite[Prop.5.1]{MR2263052} for the generality required here.

\begin{lemma}[Delooping]\label{lm:deloop}
Let $S$ be a compact $1$-manifold in $D_{2k}$ which has a circle component $C$, and let $S' = S - C$. Then $S$, viewed as an object in $\Cob(D_{2k})$ is isomorphic to $q S'\oplus q^{-1} S'$. \hfill\qedsymbol
\end{lemma}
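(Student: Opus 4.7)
The plan is to exhibit the isomorphism explicitly via Bar-Natan's delooping morphisms, adapted to the Frobenius structure $X^2 = Xh$ governing $\Cob(D_{2k})$. Concretely, I would define four morphisms
\[
f_1\colon S \to qS',\qquad f_2\colon S \to q^{-1}S',\qquad g_1\colon qS' \to S,\qquad g_2\colon q^{-1}S' \to S,
\]
each consisting of a product cobordism on $S'$ disjointly united with a single disk that either caps off $C$ (for the $f_i$) or births a new copy of $C$ (for the $g_j$). Explicitly, $f_2$ is the undotted cap on $C$, $g_1$ is the undotted birth of $C$, $g_2$ is the birth of $C$ carrying one dot, and $f_1$ is the combination (dotted cap) $-\,h\cdot$ (undotted cap). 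The degree-preservation condition $\chi(C)-\chi(S_0) = j_0 - j_1 + 2\#\{\text{dots}\}$ is routine to verify: each disk contributes $\chi = 1$, and each dot (or factor of $h$) shifts the $q$-grading by $-2$.

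The core of the argument is then to check the four composition identities $f_i \circ g_j = \delta_{ij}\,\id$ together with the completeness identity $g_1 \circ f_1 + g_2 \circ f_2 = \id_S$. Each composite $f_i \circ g_j$ glues the birth disk of $g_j$ to the cap disk of $f_i$ to form a sphere carrying the combined dot count, disjoint from the product $S' \times [0,1]$. The relations in (\ref{eq:localrels}) then evaluate each sphere: undotted sphere $=0$, dotted sphere $= 1$, and a doubly-dotted sphere $= h$ after applying $X^2 = Xh$ locally. The $-h$ correction in $f_1$ is calibrated so that $f_1 \circ g_2 = h - h\cdot 1 = 0$, while $f_1 \circ g_1 = 1$, $f_2 \circ g_2 = 1$, and $f_2 \circ g_1 = 0$ follow directly.

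The completeness identity $g_1 \circ f_1 + g_2 \circ f_2 = \id_S$ is precisely the content of the neck-cutting relation (\ref{eq:localrelb}) applied to a cylindrical neighbourhood of $C\times[0,1]$ inside $S\times[0,1]$. Expanding the sum produces three terms: a dotted cap disk paired with an undotted birth disk, an undotted cap disk paired with a dotted birth disk, and $-h$ times an undotted pair. These match the three terms on the right-hand side of (\ref{eq:localrelb}) exactly, while the product $S'\times[0,1]$ appears identically on both sides.

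The main obstacle is essentially bookkeeping: placing the dots correctly and arranging the single $-h$ correction so that all five identities hold simultaneously. Once the morphisms are fixed as above, everything reduces to direct applications of the local relations (\ref{eq:localrels}) and (\ref{eq:localrelb}), with no further input required.
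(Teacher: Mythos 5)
Your construction of the four morphisms matches exactly what the paper describes after the lemma ($\Phi_+$ and $\Phi_-$ being the dotted-cap-minus-$h$-cap and the plain cap, with the inverse given by a birth and a dotted birth), and your verification of the five composite identities via the relations~(\ref{eq:localrels}) and~(\ref{eq:localrelb}) is the standard argument the paper delegates to Bar-Natan and Naot. Correct, and essentially the same route.
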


Let us denote this isomorphism by $\Phi\colon S \to qS'\oplus q^{-1}S'$. This is a $2\times 1$-matrix with entries $\Phi_+$ and $\Phi_-$ given by
\[
\begin{tikzpicture}
\node at (0,0) {$\Phi_+ = $};
\bowlud{0.6}{-0.1}{0.5}{0.1}
\node at (0.8,0.05) {$\boldsymbol{\cdot}$};
\node at (1.4,0) {$-\,h$};
\bowlud{1.8}{-0.1}{0.5}{0.1}
\node at (4,0) {$\Phi_- = $};
\bowlud{4.6}{-0.1}{0.5}{0.1}
\end{tikzpicture}
\]
where the cap represents death of the circle as a cobordism. Similarly, the inverse isomorphism is a $1\times 2$-matrix with entries a birth and a dotted birth.

Because of delooping, every object in $\Cob(D^0_0)$ is isomorphic to a direct sum of objects with empty $1$-manifold. Furthermore, cobordisms between empty sets are closed surfaces, which can be simplified using the relations (\ref{eq:localrels}) and (\ref{eq:localrelb}) until the cobordism is empty as well. It follows that $\Cob(D^0_0)$ can be identified with the category $\mathfrak{Mod}^q_{\Z[h]}$ of finitely generated free graded modules over $\Z[h]$. Similarly, the category $\Cob(D^1_1)$ can be identified with the category $\mathfrak{Mod}^q_{\Z[X,h]/(X^2-Xh)}$ of finitely generated free graded modules over $\Z[X,h]/(X^2-Xh)$.

We also need Bar-Natan's Gaussian elimination in cochain complexes, see \cite[Lm.4.2]{MR2320156}.

\begin{lemma}[Gaussian Elimination]\label{lm:gausselim}
Let $\mathfrak{C}$ be an additive category and $(C,c)$ be a cochain complex over $\mathfrak{C}$ such that $C^n=A^n\oplus B^n$, $C^{n+1}=A^{n+1}\oplus B^{n+1}$, and the coboundary $c_k\colon C^k\to C^{k+1}$ for $k=n-1,n,n+1$ are represented by matrices
\[
c_{n-1} = \begin{pmatrix} \alpha \\ \beta \end{pmatrix}, \hspace{0.4cm}
c_n=\begin{pmatrix} \varphi & \delta \\ \gamma & \varepsilon \end{pmatrix},\hspace{0.4cm}
c_{n+1} = \begin{pmatrix} \mu & \nu \end{pmatrix},
\]
with $\varphi\colon A^n \to A^{n+1}$ an isomorphism. Then $C$ is chain homotopy equivalent to a cochain complex $(D,d)$ with $D^k=C^k$ for $k\not=n,n+1$, $D^k=B^k$ for $k=n,n+1$, $d_k=c_k$ for $k\not=n-1,n,n+1$, and
\[
\pushQED{\qed}
d_{n-1} = \beta, \hspace{0.4cm} d_n = \varepsilon - \gamma\varphi^{-1}\delta, \hspace{0.4cm} d_{n+1} = \nu. \qedhere
\]
\end{lemma}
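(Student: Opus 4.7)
The plan is to exhibit explicit chain maps $f\colon C\to D$ and $g\colon D\to C$ together with an explicit chain homotopy $H\colon C^*\to C^{*-1}$ realizing $g\circ f\simeq \id_C$, and to verify $f\circ g=\id_D$ directly. The whole argument is block matrix arithmetic made possible by the assumed invertibility of $\varphi$.

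Outside degrees $n$ and $n+1$, set both $f$ and $g$ equal to the identity, and let $H$ vanish. In the affected degrees define
\[
g_n=\begin{pmatrix}-\varphi^{-1}\delta \\ \id\end{pmatrix},\quad g_{n+1}=\begin{pmatrix}0 \\ \id\end{pmatrix},\quad f_n=\begin{pmatrix}0 & \id\end{pmatrix},\quad f_{n+1}=\begin{pmatrix}-\gamma\varphi^{-1} & \id\end{pmatrix},
\]
and set the single non-zero component of $H$ to be
\[
H_{n+1}=\begin{pmatrix}\varphi^{-1} & 0 \\ 0 & 0\end{pmatrix}\colon A^{n+1}\oplus B^{n+1}\longrightarrow A^n\oplus B^n.
\]

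First I would check that $f$ and $g$ are chain maps; the only non-obvious identities occur near degrees $n$ and $n+1$. The identity $f_{n+1}c_n=d_nf_n$ is an immediate matrix computation producing $(0,\varepsilon-\gamma\varphi^{-1}\delta)$ on both sides, and $c_ng_n=g_{n+1}d_n$ works symmetrically. The chain map checks for $f$ at level $n+2$ and for $g$ at level $n-1$ require a bit more: they reduce to the relations $\mu\varphi+\nu\gamma=0$ and $\varphi\alpha+\delta\beta=0$ respectively, both of which are components of $c\circ c=0$ in the original complex. Next, $f\circ g=\id_D$ is immediate from the definitions in the two affected degrees.

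Finally I would verify $\id_C-g\circ f=cH+Hc$. Outside degrees $n$ and $n+1$ both sides vanish. In degree $n$ the right-hand side collapses to $H_{n+1}c_n=\begin{pmatrix}\id & \varphi^{-1}\delta \\ 0 & 0\end{pmatrix}$, which matches $\id-g_nf_n$; in degree $n+1$ it collapses to $c_nH_{n+1}=\begin{pmatrix}\id & 0 \\ \gamma\varphi^{-1} & 0\end{pmatrix}$, which matches $\id-g_{n+1}f_{n+1}$.

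No serious obstacle is anticipated: once the correcting terms $-\varphi^{-1}\delta$, $-\gamma\varphi^{-1}$ and the homotopy entry $\varphi^{-1}$ have been written down, the proof is a purely formal verification. The main thing to keep straight is the indexing convention and the observation that $d_n=\varepsilon-\gamma\varphi^{-1}\delta$ automatically satisfies $d_{n+1}d_n=0$ and $d_nd_{n-1}=0$, both of which follow from $c\circ c=0$ in the original complex after expanding the appropriate block products.
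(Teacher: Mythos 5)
Your proof is correct, and it is the standard argument: the paper does not supply its own proof of this lemma (it is quoted as known, with a terminal $\qed$, from Bar-Natan \cite[Lm.4.2]{MR2320156}), and the explicit maps and homotopy you write down are precisely the ones Bar-Natan gives there. All the block-matrix identities you invoke ($\mu\varphi+\nu\gamma=0$, $\varphi\alpha+\delta\beta=0$, etc.) are indeed the relevant components of $c\circ c=0$, and the verifications of $f_{n+1}c_n=d_nf_n$, $c_ng_n=g_{n+1}d_n$, $fg=\id$, and $\id-gf=cH+Hc$ all check out.
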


A good example how this works in practice can be found in \cite[\S 6]{MR2320156}. Since we work over $\Z[h]$, we give an explicit example here, which will also highlight all the techniques that we are going to need further below in this paper.

\begin{example}\label{ex:contract}
Consider the following, infinitely generated, cochain complex $C_\infty$ over $\Cob(D^2_2)$. We set
\[
\begin{tikzpicture}
\node at (0,0) {$C^0_\infty =$};
\tmomega{0.6}{-0.3}{0.6}{very thick}
\node at (4,0) {$C^k_\infty = \, q^{2k-1}$};
\tmalpha{5}{-0.3}{0.6}{very thick}
\node at (6.7,0) {for $k\geq 1$};
\end{tikzpicture}
\]
and let the boundary be given by $d_0\colon C^0_\infty\to C^1_\infty$ as the surgery, while for $k\geq 1$ we set
\[
\begin{tikzpicture}
\node at (0,0) {$d_{2k-1} = $};
\node at (1.6,0) {$-$};
\tmalpha{0.8}{-0.2}{0.4}{thick}
\tmalpha{2}{-0.2}{0.4}{thick}
\node at (1,-0.075) {$\bullet$};
\node at (2.2, 0.075) {$\bullet$};
\node at (5,0) {$d_{2k} = $};
\node at (6.6, 0) {$+$};
\tmalpha{5.8}{-0.2}{0.4}{thick}
\tmalpha{7}{-0.2}{0.4}{thick}
\node at (6,-0.075) {$\bullet$};
\node at (7.2, 0.075) {$\bullet$};
\node at (7.9,0) {$- \,\,h$};
\end{tikzpicture}
\]
We claim that
\[
\begin{tikzpicture}
\node at (0,0) {$C_\infty\,\otimes $};
\tmalpha{0.6}{-0.2}{0.4}{thick}
\node at (1.4,0) {$\simeq 0.$};
\end{tikzpicture}
\]
To see this note that the tensor product turns it into a cochain complex
\[
\begin{tikzpicture}
\tmomega{0}{0}{0.6}{very thick}
\tmalpha{0}{0.6}{0.6}{very thick}
\tmalpha{3.5}{0}{0.6}{very thick}
\tmalpha{3.5}{0.6}{0.6}{very thick}
\tmalpha{7}{0}{0.6}{very thick}
\tmalpha{7}{0.6}{0.6}{very thick}
\tmalpha{10.5}{0}{0.6}{very thick}
\tmalpha{10.5}{0.6}{0.6}{very thick}
\node at (3.2,0.6) {$q$};
\node at (6.7, 0.6) {$q^3$};
\node at (10.2, 0.6) {$q^5$};
\node at (11.6,0.6) {$\cdots$};
\drawblack{0.8,0.6}{3,0.6}{ }{S}
\drawblack{4.3,0.6}{6.4,0.6}{ }{ }
\drawblack{7.8,0.6}{9.9,0.6}{ }{ }
\tmalpha{4.8}{0.7}{0.3}{thick}
\tmalpha{4.8}{1}{0.3}{thick}
\tmalpha{5.7}{0.7}{0.3}{thick}
\tmalpha{5.7}{1}{0.3}{thick}
\node[scale = 0.7] at (4.95,0.79) {$\bullet$};
\node[scale = 0.7] at (5.85,0.9) {$\bullet$};
\node[scale = 0.7]  at (5.4, 1) {$-$};
\tmalpha{8}{0.7}{0.3}{thick}
\tmalpha{8}{1}{0.3}{thick}
\tmalpha{8.9}{0.7}{0.3}{thick}
\tmalpha{8.9}{1}{0.3}{thick}
\node[scale = 0.7] at (8.15,0.79) {$\bullet$};
\node[scale = 0.7] at (9.05,0.9) {$\bullet$};
\node[scale = 0.7]  at (8.6, 1) {$+$};
\node[scale = 0.7]  at (9.55, 1) {$-\,\,h$};
\end{tikzpicture}
\]
This is then delooped to
\[
\begin{tikzpicture}
\tmalpha{0}{0.7}{0.6}{very thick}
\tmalpha{3.5}{0.7}{0.6}{very thick}
\tmalpha{3.5}{-0.3}{0.6}{very thick}
\tmalpha{7}{0.7}{0.6}{very thick}
\tmalpha{7}{-0.3}{0.6}{very thick}
\tmalpha{10.5}{0.7}{0.6}{very thick}
\tmalpha{10.5}{-0.3}{0.6}{very thick}
\node at (3.2,1) {$q^2$};
\node at (6.7, 1) {$q^4$};
\node at (10.2, 1) {$q^6$};
\node at (6.7, 0) {$q^2$};
\node at (10.2, 0) {$q^4$};
\node at (11.6, 0.5){$\cdots$};
\drawblack{0.8,1}{3,1}{ }{\bullet \hspace{4pt}-h}
\drawblack{0.8,0.9}{3, 0}{sloped}{\id}
\smcap{1.525}{1.035}{0.4}{thick}
\drawblack{4.3, 1}{6.4, 1}{ }{\bullet}
\drawblack{4.3, 0.9}{6.4, 0.1}{sloped}{-\id}
\drawblack{4.3, 0}{6.4, 0}{ }{\bullet \hspace{4pt}-h}
\smcap{5.245}{1.035}{0.4}{thick}
\smcap{4.975}{0.035}{0.4}{thick}
\drawblack{7.8, 0}{9.9, 0}{ }{\bullet}
\drawblack{7.8, 0.9}{9.9, 0.1}{sloped}{\id}
\drawblack{7.8, 1}{9.9, 1}{ }{\bullet \hspace{4pt}-h}
\smcap{8.745}{0.035}{0.4}{thick}
\smcap{8.475}{1.035}{0.4}{thick}
\end{tikzpicture}
\]
To see that we get these particular morphisms, notice that a split surgery followed by death of that new circle is the identity, so the first two morphisms have this form because of the Delooping Lemma \ref{lm:deloop} using $\Phi$. The next morphisms require us to first birth a circle using $\Phi^{-1}$ from Lemma \ref{lm:deloop}, then dot, then remove the circle. Notice that if the birth of the circle is followed by dotting this component, composition with $\Phi_+$ is $0$. Similarly, if the birth is not followed by a dot, composing with $\Phi_-$ is $0$. Also, two dots after the birth followed by $\Phi_-$ is multiplication by $h$. The remaining morphisms are similar. The reader may want to check that the boundary composed with itself is indeed $0$.

We can now use the Gaussian Elimination Lemma \ref{lm:gausselim} to all of the diagonal identities to get the required chain homotopy contraction.

If we let $C_{n,\infty}$ for $n\geq 0$ be the subcomplex generated by all objects of homological degree greater than $n$, we get that 
\[
\begin{tikzpicture}
\node at (0,0) {$C_\infty/C_{n,\infty} \,\otimes $};
\tmalpha{1}{-0.2}{0.4}{thick}
\node at (2,0) {$\simeq q^{2n}$};
\tmalpha{2.5}{-0.2}{0.4}{thick}
\end{tikzpicture}
\]
This is because we can still do the Gaussian eliminations, but after $n$ cancellations, we are left with one object.
\end{example}

We now show how to get the cochain complex over $\Cob(D^2_2)$ for the tangle $T_n$ corresponding to the braid word $a^n$ for $n\geq 1$. This is well known, a version of this already appeared in \cite[\S 6.2]{MR1740682}, but we prove it mostly to showcase the techniques.

\begin{proposition}\label{prp:toruslink2}
Let $n\geq 1$ and $T_n$ the tangle corresponding to the braid word $a^n$. Then $\CBN(T_n;\Z[h])$ is chain homotopy equivalent to the complex $q^nC_\infty/C_{n,\infty}$.
\end{proposition}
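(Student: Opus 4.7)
The plan is to argue by induction on $n$. For $n=1$, one observes that $\CBN(T_a;\Z[h])$, as defined in the text, is the two-term complex $q\tomega\to q^2\talpha$ with the saddle as coboundary, which agrees on the nose with $qC_\infty/C_{1,\infty}$: the truncated complex $C_\infty/C_{1,\infty}$ has $\tomega$ in degree $0$, $q\talpha$ in degree $1$ and $d_0$ equal to the saddle, and the outer $q$-shift supplies the correct bidegrees.

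For the inductive step, $T_{n+1}$ arises from stacking one additional $a$-crossing on top of $T_n$, so Bar-Natan's tensor product together with the induction hypothesis yields a chain homotopy equivalence
\[
\CBN(T_{n+1};\Z[h]) \;\simeq\; (q^nC_\infty/C_{n,\infty}) \otimes \CBN(T_a;\Z[h]).
\]
I would regard the right-hand side as a bicomplex whose row $0$ equals $q(q^nC_\infty/C_{n,\infty}) = q^{n+1}C_\infty/C_{n,\infty}$ (arising from the $q\tomega$ summand of $\CBN(T_a)$) and whose row $1$ equals $q^2\talpha \otimes (q^nC_\infty/C_{n,\infty})$ (from the $q^2\talpha$ summand), the two rows being linked by the vertical map $(\text{saddle})\otimes \id$. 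The tangle identities $\tomega\otimes X = X$ for any $X$ and $\talpha\otimes\talpha = \talpha\sqcup\bigcirc$ imply that every newly created closed circle sits in row $1$.

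Next, I would apply the Delooping Lemma \ref{lm:deloop} to each circle in row $1$ and cascade Gaussian eliminations exactly as in Example \ref{ex:contract}; this is the computation that establishes $(C_\infty/C_{n,\infty})\otimes \talpha \simeq q^{2n}\talpha$, and row $0$ does not interfere because it contributes no new circles to deloop. After all cancellations, the only surviving object from row $1$ is $q^{3n+2}\talpha$ in total homological degree $n+1$, so what remains is $q^{n+1}C_\infty/C_{n,\infty}$ extended by this extra generator, with an induced coboundary $q^{3n}\talpha\to q^{3n+2}\talpha$. Tracking this residual map through the cascade of eliminations, using the $\Phi_\pm$ identities spelled out in Example \ref{ex:contract}, reproduces the boundary $d_n$ of $C_\infty$ with the correct sign depending on the parity of $n$; hence the total complex is exactly $q^{n+1}C_\infty/C_{n+1,\infty}$. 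The main obstacle is the careful sign bookkeeping through the cascading Gaussian eliminations (matching the alternating pattern between $d_{2k-1}$ and $d_{2k}$ in $C_\infty$), but this is essentially already carried out in Example \ref{ex:contract}.
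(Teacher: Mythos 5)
Your proposal follows essentially the same route as the paper: induct on $n$, realize $\CBN(T_{n+1};\Z[h])$ as the tensor product of $q^nC_\infty/C_{n,\infty}$ with the two-term complex $\CBN(T_a;\Z[h])$, deloop the circles appearing when $\talpha$ meets $\talpha$, and then cancel via Gaussian elimination as in Example~\ref{ex:contract}. The one place you are lighter than the paper is the final step: Example~\ref{ex:contract} establishes the chain homotopy equivalence $(C_\infty/C_{n,\infty})\otimes\talpha\simeq q^{2n}\talpha$ but does not by itself compute the residual map from the top of row~$0$ into the lone survivor of row~$1$; the paper devotes the second half of its proof to explicitly working out the two morphisms $f$ and $g$ (birth--$\tilde{d}_{n-1}$--death on the one hand, and the dotted-death contributions of $\Phi_\pm$ on the other) and then combining them after the final elimination to recover $d_n$, with the parity dependence falling out of the neck-cutting relation. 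Your plan correctly flags this as the main obstacle and the $\Phi_\pm$ bookkeeping is indeed what is needed, so there is no gap in the approach, only in the level of detail at that last step.
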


\begin{proof}
Let us write $C_n = C_\infty/C_{n,\infty}$. We need to show that $C_n\otimes C_1\simeq C_{n+1}$ for all $n\geq 1$. We do this by induction, but leave the induction start to the reader. For $n\geq 2$ the complex $C_n\otimes C_1$ ends in
\[
\begin{tikzpicture}
\tmalpha{0}{1.5}{0.6}{very thick}
\tmomega{0}{2.1}{0.6}{very thick}
\tmalpha{3.5}{1.5}{0.6}{very thick}
\tmomega{3.5}{2.1}{0.6}{very thick}
\tmalpha{3.5}{0}{0.6}{very thick}
\tmalpha{3.5}{0.6}{0.6}{very thick}
\tmalpha{7}{0}{0.6}{very thick}
\tmalpha{7}{0.6}{0.6}{very thick}
\node at (-0.5,2.1) {$q^{2n-3}$};
\node at (3, 2.1) {$q^{2n-1}$};
\node at (3, 0.6) {$q^{2n-2}$};
\node at (6.6, 0.6) {$q^{2n}$};
\drawblack{0.8, 2.1}{2.5,2.1}{ }{d_{n-1}}
\drawblack{0.8, 2}{2.5, 0.6}{sloped}{(-1)^{n-1}S}
\drawblack{4.3, 2.1}{6.2, 0.7}{sloped}{(-1)^nS}
\drawblack{4.3, 0.6}{6.2, 0.6}{ }{\tilde{d}_{n-1}}
\end{tikzpicture}
\]
After delooping and $n-1$ Gaussian eliminations as in Example \ref{ex:contract} this turns into
\[
\begin{tikzpicture}
\tmalpha{0}{1.5}{0.6}{very thick}
\tmalpha{3.5}{1.5}{0.6}{very thick}
\tmalpha{3.5}{0}{0.6}{very thick}
\tmalpha{7}{1.5}{0.6}{very thick}
\tmalpha{7}{0}{0.6}{very thick}
\node at (-0.5,1.8) {$q^{2n-3}$};
\node at (3, 1.8) {$q^{2n-1}$};
\node at (3, 0.3) {$q^{2n-1}$};
\node at (6.5, 1.8) {$q^{2n+1}$};
\node at (6.5, 0.3) {$q^{2n-1}$};
\drawblack{0.8,1.8}{2.5, 1.8}{ }{d_{n-1}}
\drawgray{0.8, 1.7}{2.5, 0.4}
\drawblack{4.3, 1.8}{6, 1.8}{ }{(-1)^n(\,\,\bullet\hspace{3pt}-\,h)}
\drawblack{4.3, 0.3}{6, 0.3}{ }{f}
\drawblack{4.3, 0.4}{6, 1.7}{sloped, near start}{g}
\drawblackw{4.3, 1.7}{6, 0.4}{sloped, near end}{(-1)^n\id}
\smcup{5.155}{1.9}{0.3}{thick}
\end{tikzpicture}
\]
We do not need to work out the gray morphism, as it will not survive the next Gaussian elimination, but we need to work out $f$ and $g$.

The morphism $f$ is obtained by birthing a circle, then performing $\tilde{d}_{n-1}$, then performing death on the circle. Only the part of $\tilde{d}_{n-1}$ that puts a dot on the circle leads to a sphere with a dot on it. Hence $f = (-1)^{n-1}\id$.

For $g$, notice that putting a dot on the circle the summand of  $\tilde{d}_{n-1}$ leads to $0$ after composition with $\Phi_+$. For the other summand (or summands if $n$ is even) only the dotted death composes to something non-zero. Therefore
\[
\begin{tikzpicture}
\node at (0,0) {$g=\hspace{3pt}\bullet$\hspace{1cm}$n$ even,};
\node at (5,0) {$g=\hspace{3pt}\bullet \hspace{6pt}-\,\,h$\hspace{1cm}$n$ odd.};
\smcap{-0.825}{-0.15}{0.6}{thick}
\smcap{3.75}{-0.185}{0.6}{thick}
\end{tikzpicture}
\]
If we do Gaussian elimination along $f$, the morphism between the remaining objects is obtained from the previous morphism by adding $g$. If $n$ is odd, we get the difference of dottings with two $h$ cancelling each other, and if $n$ is even, we get the sum of dottings minus $h$.
\end{proof}

For $3$-braids, the relevant quotient category is $\Cob(D^3_3)$. After delooping, there are only five possible smoothings, and for simplicity we use greek letters as in Figure \ref{fig:cheatsheet}, which also lists several objects over $D^1_3, D^3_1$, and $D_2^2$, as well as several common morphisms.

\begin{figure}[ht]
\begin{center}
\begin{tikzpicture}
\node at (0,0) {$\omega =$};
\smomega{0.5}{-0.3}{0.6}{0.6}{very thick}
\node at (2,0) {$\alpha =$};
\smalpha{2.5}{-0.3}{0.6}{0.6}{very thick}
\node at (4,0) {$\beta =$};
\smbeta{4.5}{-0.3}{0.6}{0.6}{very thick}
\node at (6,0) {$\gamma =$};
\smgamma{6.5}{-0.3}{0.6}{0.6}{very thick}
\node at (8,0) {$\delta =$};
\smdelta{8.6}{-0.3}{0.6}{0.6}{very thick}
\end{tikzpicture}
\\[0.2cm]
\begin{tikzpicture}
\node at (0,0) {$\varepsilon =$};
\smeps{0.5}{-0.3}{0.6}{0.6}{very thick}
\node at (2,0) {$\varepsilon^\ast = $};
\smepsd{2.6}{-0.3}{0.6}{0.6}{very thick}
\node at (5,0) {$\zeta =$};
\smzet{5.5}{-0.3}{0.6}{0.6}{very thick}
\node at (7,0) {$\zeta^\ast =$};
\smzetd{7.6}{-0.3}{0.6}{0.6}{very thick}
\end{tikzpicture}
\\[0.2cm]
\begin{tikzpicture}
\node at (0,0) {$\tomega = $};
\node at (4,0) {$\talpha = $};
\tmomega{0.5}{-0.3}{0.6}{very thick}
\tmalpha{4.5}{-0.3}{0.6}{very thick}
\end{tikzpicture}
\\[0.2cm]
\begin{tikzpicture}
\node at (0.5,1) {$\omega$};
\node at (2.3,2) {$\alpha$};
\node at (2.3,0) {$\beta$};
\node at (4.3,2) {$\gamma$};
\node at (4.3,0) {$\delta$};
\smomega{6}{0.7}{0.6}{0.6}{very thick}
\smalpha{8}{1.7}{0.6}{0.6}{very thick}
\smbeta{8}{-0.3}{0.6}{0.6}{very thick}
\smgamma{10}{1.7}{0.6}{0.6}{very thick}
\smdelta{10}{-0.3}{0.6}{0.6}{very thick}
\draw (0.8,1.1) -- node [above, sloped, scale = 0.7] {$S$} (2,1.9);
\draw (0.8, 0.9) -- node [above, sloped, scale = 0.7] {$S$} (2, 0.1);
\draw (2.6, 2) -- node [above, scale = 0.7] {$S$} (4, 2);
\draw (2.6, 0) -- node [above, scale = 0.7] {$S$} (4, 0);
\draw (2.3, 0.3) -- node [right, scale = 0.7] {$D$} (2.3, 1.7);
\draw (4.3, 0.3) -- node [right, scale = 0.7] {$D$} (4.3, 1.7);
\draw (2.6, 0.3) -- node [above, sloped, near end, scale = 0.7] {$S$} (4, 1.7);
\draw[-, line width=6pt, color = white] (2.6, 1.7) -- (4, 0.3);
\draw (2.6, 1.7) -- node [above, sloped, near end, scale = 0.7] {$S$} (4, 0.3);
\draw (6.8,1.1) -- node [above, sloped, scale = 0.7] {$S$} (7.8,1.9);
\draw (6.8, 0.9) -- node [above, sloped, scale = 0.7] {$S$} (7.8, 0.1);
\draw (8.8, 2) -- node [above, scale = 0.7] {$S$} (9.8, 2);
\draw (8.8, 0) -- node [above, scale = 0.7] {$S$} (9.8, 0);
\draw (8.3, 0.4) -- node [right, scale = 0.7] {$D$} (8.3, 1.6);
\draw (10.3, 0.4) -- node [right, scale = 0.7] {$D$} (10.3, 1.6);
\draw (8.8, 0.5) -- node [above, sloped, near end, scale = 0.7] {$S$} (9.8, 1.5);
\draw[-, line width=6pt, color = white] (8.8, 1.5) -- (9.8, 0.5);
\draw (8.8, 1.5) -- node [above, sloped, near end, scale = 0.7] {$S$} (9.8, 0.5);
\end{tikzpicture}
\\[0.2cm]
\begin{tikzpicture}
\node at (-3,0.2) {$c = $};
\smcap{-2.6}{0}{0.6}{thick}
\node at (-2,0.2) {$-$};
\smcup{-1.7}{0}{0.6}{thick}
\node at (-2.45,0.2) {$\bullet$};
\node at (-1.55, 0.2) {$\bullet$};
\node at (-1.2, 0) {,};
\node at (0,0.2) {$d = $};
\smcap{0.5}{0}{0.6}{thick}
\node at (1.1,0.2) {$+$};
\smcup{1.4}{0}{0.6}{thick}
\node at (2.2, 0.2) {$-\,h,$};
\node at (0.65,0.2) {$\bullet$};
\node at (1.55, 0.2) {$\bullet$};
\node at (4,0.2) {$e = 2 $};
\tmomega{4.5}{0}{0.4}{thick}
\node at (4.55, 0.2) {$\bullet$};
\node at (5.4,0.2) {$-\,h.$};
\end{tikzpicture}
\end{center}
\caption{\label{fig:cheatsheet}The standard smoothings and their morphisms. The letter $S$ stands for a surgery, and $D$ for two surgeries. The morphisms $c,d$, and $e$ have the same domain and codomain, which can be $\alpha,\beta,\gamma,\delta$, or $\talpha$ for $c,d$, and $\tomega$ for $e$.}
\end{figure}

With our convention for the tensor product we get
\[
\alpha = \varepsilon\otimes \varepsilon^\ast, \hspace{0.3cm} \beta = \zeta\otimes \zeta^\ast, \hspace{0.3cm} \gamma = \varepsilon\otimes \zeta^\ast, \hspace{0.3cm} \delta = \zeta\otimes\varepsilon^\ast.
\]
Notice that $\omega$ works as a unit where applicable. After delooping we also get
\begin{equation}\label{eq:tensor_greeks}
\begin{alignedat}{2}
\mbox{ }\alpha\otimes \varepsilon &\cong q\varepsilon \oplus q^{-1}\varepsilon & \hspace{2cm} & \alpha\otimes \zeta \cong \varepsilon \\
\beta\otimes \varepsilon &\cong \zeta & & \beta\otimes \zeta \cong q\zeta \oplus q^{-1}\zeta \\
\gamma\otimes \varepsilon &\cong \varepsilon  & & \gamma\otimes\zeta \cong q\varepsilon \oplus q^{-1}\varepsilon \\
\delta\otimes\varepsilon &\cong q\zeta\oplus q^{-1}\zeta & & \delta\otimes\zeta \cong \zeta.
\end{alignedat}
\end{equation}

\section{The Bar-Natan complex for torus tangles on three strands}

We denote by $\mathcal{B}$ the following cochain complex over $\Cob(D^3_3)$, where we omit the homological degrees to save space:
\[
\begin{tikzpicture}
\node at (0,3.6) {$\omega$};
\node at (2,4.4) {$q\alpha$};
\node at (2,2.8) {$q\beta$};
\node at (4,4.4) {$q^2\gamma$};
\node at (4,2.8) {$q^2\delta$};
\node at (6,4.4) {$q^4\gamma$};
\node at (6,2.8) {$q^4\delta$};
\node at (8,4.4) {$q^5\alpha$};
\node at (8,2.8) {$q^5\beta$};
\node at (10,4.4) {$q^7\alpha$};
\node at (10,2.8) {$q^7\beta$};
\node at (11,3.6) {$\cdots$};
\draw[->] (0.3, 3.7) -- node [above, scale = 0.7, sloped] {$S$} (1.6, 4.4);
\draw[->] (0.3, 3.5) -- node [above, scale = 0.7, sloped] {$S$} (1.6, 2.8);
\draw[->] (2.4, 4.4) -- node [above, scale = 0.7] {$-S$} (3.6, 4.4);
\draw[->] (2.4, 2.8) -- node [above, scale = 0.7] {$-S$} (3.6, 2.8);
\draw[->] (4.4, 4.4) -- node [above, scale = 0.7] {$d$} (5.6, 4.4);
\draw[->] (4.4, 2.8) -- node [above, scale = 0.7] {$d$} (5.6, 2.8);
\draw[->] (6.4, 4.4) -- node [above, scale = 0.7] {$-S$} (7.6, 4.4);
\draw[->] (6.4, 2.8) -- node [above, scale = 0.7] {$-S$} (7.6, 2.8);
\draw[->] (8.4, 4.4) -- node [above, scale = 0.7] {$d$} (9.6, 4.4);
\draw[->] (8.4, 2.8) -- node [above, scale = 0.7] {$d$} (9.6, 2.8);
\draw[->] (2.4, 3) -- node [above, scale = 0.7, sloped, near start] {$S$} (3.6, 4.2);
\draw[->] (4.4, 3) -- node [above, scale = 0.7, sloped, near start] {$D$} (5.6, 4.2);
\draw[->] (6.4, 3) -- node [above, scale = 0.7, sloped, near start] {$S$} (7.6, 4.2);
\draw[->] (8.4, 3) -- node [above, scale = 0.7, sloped, near start] {$D$} (9.6, 4.2);
\draw[-, line width=6pt, color = white] (2.4, 4.2) -- (3.6, 3);
\draw[-, line width=6pt, color = white] (4.4, 4.2) -- (5.6, 3);
\draw[-, line width=6pt, color = white] (6.4, 4.2) -- (7.6, 3);
\draw[-, line width=6pt, color = white] (8.4, 4.2) -- (9.6, 3);
\draw[->] (2.4, 4.2) -- node [above, scale = 0.7, sloped, near end] {$S$} (3.6, 3);
\draw[->] (4.4, 4.2) -- node [above, scale = 0.7, sloped, near end] {$D$} (5.6, 3);
\draw[->] (6.4, 4.2) -- node [above, scale = 0.7, sloped, near end] {$S$} (7.6, 3);
\draw[->] (8.4, 4.2) -- node [above, scale = 0.7, sloped, near end] {$D$} (9.6, 3);
\node at (0,1) {$\cdots$};
\node at (0.8, 1.8) {$q^{1+6m}\alpha$};
\node at (3.2, 1.8) {$q^{2+6m}\gamma$};
\node at (5.6, 1.8) {$q^{4+6m}\gamma$};
\node at (8, 1.8) {$q^{5+6m}\alpha$};
\node at (10.4, 1.8) {$q^{7+6m}\alpha$};
\node at (0.8, 0.2) {$q^{1+6m}\beta$};
\node at (3.2, 0.2) {$q^{2+6m}\delta$};
\node at (5.6, 0.2) {$q^{4+6m}\delta$};
\node at (8, 0.2) {$q^{5+6m}\beta$};
\node at (10.4, 0.2) {$q^{7+6m}\beta$};
\node at (11.2, 1) {$\cdots$};
\draw[->] (1.5, 1.8) -- node [above, scale = 0.7] {$-S$} (2.5, 1.8);
\draw[->] (1.5, 0.2) -- node [above, scale = 0.7] {$-S$} (2.5, 0.2);
\draw[->] (3.9, 1.8) -- node [above, scale = 0.7] {$d$} (4.9, 1.8);
\draw[->] (3.9, 0.2) -- node [above, scale = 0.7] {$d$} (4.9, 0.2);
\draw[->] (6.3, 1.8) -- node [above, scale = 0.7] {$-S$} (7.3, 1.8);
\draw[->] (6.3, 0.2) -- node [above, scale = 0.7] {$-S$} (7.3, 0.2);
\draw[->] (8.7, 1.8) -- node [above, scale = 0.7] {$d$} (9.7, 1.8);
\draw[->] (8.7, 0.2) -- node [above, scale = 0.7] {$d$} (9.7, 0.2);
\draw[->] (1.4, 0.4) -- node [above, scale = 0.7, near start, sloped] {$S$} (2.6, 1.6);
\draw[->] (3.8, 0.4) -- node [above, scale = 0.7, near start, sloped] {$D$} (5, 1.6);
\draw[->] (6.2, 0.4) -- node [above, scale = 0.7, near start, sloped] {$S$} (7.4, 1.6);
\draw[->] (8.6, 0.4) -- node [above, scale = 0.7, near start, sloped] {$D$} (9.8, 1.6);
\draw[-, line width=6pt, color = white] (1.4, 1.6) -- (2.6, 0.4);
\draw[-, line width=6pt, color = white] (3.8, 1.6) -- (5, 0.4);
\draw[-, line width=6pt, color = white] (6.2, 1.6) -- (7.4, 0.4);
\draw[-, line width=6pt, color = white] (8.6, 1.6) -- (9.8, 0.4);
\draw[->] (1.4, 1.6) -- node [above, scale = 0.7, near end, sloped] {$S$} (2.6, 0.4);
\draw[->] (3.8, 1.6) -- node [above, scale = 0.7, near end, sloped] {$D$} (5, 0.4);
\draw[->] (6.2, 1.6) -- node [above, scale = 0.7, near end, sloped] {$S$} (7.4, 0.4);
\draw[->] (8.6, 1.6) -- node [above, scale = 0.7, near end, sloped] {$D$} (9.8, 0.4);
\end{tikzpicture}
\]
The object labelled $\omega$ has homological degree $0$, and the objects in $q$-degree $1+6m$ have homological degree $1+4m$. Morphisms are as in Figure \ref{fig:cheatsheet}.

This cochain complex is infinitely generated, but is periodic in that $\mathcal{B}^{i+4} = q^6\mathcal{B}^i$ and $\partial^{i+4} = \partial^i$ for $i>0$. We can form the following finitely generated quotient complexes which turn out to have the right chain homotopy type for $3$-torus braids, apart from a shift in $q$-degree. We can compare this complex with the categorification of the third Jones-Wenzl projector given in \cite[\S 4.4]{MR2901969} and think of it as a version for Bar-Natan homology.

For $k\geq 0$ let $\mathcal{B}_{3k}$ be the quotient complex of $\mathcal{B}$ by the subcomplex generated by all objects of homological degree greater than $4k$. Note that for $k=0$, this means there is only the generator $\omega$ in bidegree $(0,0)$. For $k\geq 1$ the objects of highest homological degree are $u^{4k}q^{5+6(k-1)}\alpha$ and $u^{4k}q^{5+6(k-1)}\beta$.

We also let $\mathcal{B}_{3k+1}$ be the quotient complex of $\mathcal{B}$ by the subcomplex generated by all objects of homological degree greater than $2+4k$ and the object $u^{2+4k}q^{2+6k}\delta$.  So this complex ends with
\[
\begin{tikzpicture}
\node at (0,1.2) {$u^{1+4k}q^{1+6k}\alpha$};
\node at (0,0.2) {$u^{1+4k}q^{1+6k}\beta$};
\node at (4,1.2) {$u^{2+4k}q^{2+6k}\gamma$};
\draw[->] (1.1, 1.2) -- node [above, scale = 0.7] {$-S$} (2.9, 1.2);
\draw[->] (1.1, 0.3) -- node [above, scale = 0.7, sloped] {$S$} (2.9, 1.1);
\end{tikzpicture}
\]

Because of $\Omega_3$ and for proof-technical purposes, let $\mathcal{B}_{3k+1}^a$ be the quotient complex of $\mathcal{B}$ by the subcomplex generated by all objects of homological degree greater than $2+4k$.

Finally, we let $\mathcal{B}_{3k+2}$ be the quotient complex of $\mathcal{B}$ by the subcomplex generated by all objects of homological degree greater than $3+4k$ and the object $u^{3+4k}q^{4+6k}\delta$. So this complex ends with
\[
\begin{tikzpicture}
\node at (0,1.2) {$u^{2+4k}q^{2+6k}\gamma$};
\node at (0,0.2) {$u^{2+4k}q^{2+6k}\delta$};
\node at (4,1.2) {$u^{3+4k}q^{4+6k}\gamma$};
\draw[->] (1.1, 1.2) -- node [above, scale = 0.7] {$d$} (2.9, 1.2);
\draw[->] (1.1, 0.3) -- node [above, scale = 0.7, sloped] {$D$} (2.9, 1.1);
\end{tikzpicture}
\]

\begin{theorem}\label{thm:torusbraid}
Let $w=(ab)^m$ with $m\geq 0$. Then 
\begin{enumerate}
\item $\CBN(T_w;\Z[h])$ is chain homotopy equivalent to $q^{2m}\mathcal{B}_m$.
\item for $m = 3k+1$, $\CBN(T_{wa};\Z[h])$ is chain homotopy equivalent to $q^{6k+3}\mathcal{B}_{3k+1}^a$.
\end{enumerate}
\end{theorem}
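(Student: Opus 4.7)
My strategy is induction on $m$ for part (1), with a parallel case analysis on $m \bmod 3$ at the inductive step. Part (2) then follows by one more application of the same simplification procedure to the output of part (1).

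For the base case $m=0$, the word $(ab)^0$ is empty and the corresponding tangle is three parallel strands, whose Bar-Natan complex is the single object $\omega$ in bidegree $(0,0)$. By definition $\mathcal{B}_0$ consists of the single generator $\omega$ in this bidegree, so $\CBN(T_{(ab)^0};\Z[h]) = q^0\mathcal{B}_0$.

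For the inductive step of part (1), I use the factorization
\[
\CBN(T_{(ab)^m};\Z[h]) \simeq q^{2(m-1)}\mathcal{B}_{m-1} \otimes \CBN(T_a;\Z[h]) \otimes \CBN(T_b;\Z[h])
\]
coming from the inductive hypothesis and the tensor product structure, then expand by distributing over the two saddles of $T_a$ and $T_b$, apply Delooping (Lemma~\ref{lm:deloop}) to every closed circle using the simplifications in \eqref{eq:tensor_greeks}, and cancel every identity morphism in the resulting boundary via Gaussian Elimination (Lemma~\ref{lm:gausselim}), exactly as in Example~\ref{ex:contract} and the proof of Proposition~\ref{prp:toruslink2}. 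Because $\mathcal{B}$ is periodic with $\mathcal{B}^{i+4} = q^6\mathcal{B}^i$ for $i > 0$, and because $\mathcal{B}_{m-1}$ and $\mathcal{B}_m$ agree on all columns strictly below the top of $\mathcal{B}_{m-1}$, the simplifications are essentially local to the top few homological degrees, with the lower degrees of the complex unaffected. For part (2), the identical routine is applied to $q^{6k+2}\mathcal{B}_{3k+1} \otimes \CBN(T_a;\Z[h])$: one extra column is added, and the cancellation that in the subsequent $b$-step of part (1) would remove a top $\delta$-generator does not occur, which is precisely the distinction between $\mathcal{B}_{3k+1}^a$ and $\mathcal{B}_{3k+2}$.

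The main obstacle is the case analysis: the three transitions $3k \to 3k+1$, $3k+1 \to 3k+2$, and $3k+2 \to 3(k+1)$ each produce different local patterns of cancellation near the top of the complex, and recovering the surviving boundary morphisms $-S$, $d$, $D$ with the correct signs and $h$-corrections requires careful bookkeeping. The $h$-term in $d$ arises, as in the proof of Proposition~\ref{prp:toruslink2}, from the subtraction defining $\Phi_+$ in the Delooping Lemma, which contributes an $h$ whenever a dotted element of a delooped component is subsequently merged via $\Phi_-$.
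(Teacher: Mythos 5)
Your proposal follows essentially the same route as the paper: induction on $m$, the case split modulo $3$, tensoring with $T_a$ and $T_b$, delooping, and Gaussian elimination, with part~(2) obtained by stopping after the $T_a$-step — this is precisely the content of Lemmas~\ref{lm:3kto3k+1}, \ref{lm:3k+1to3k+2}, and \ref{lm:3k+2to3k+3}, which the paper's proof of Theorem~\ref{thm:torusbraid} invokes. The one ingredient your sketch does not isolate is the contraction result Lemma~\ref{lm:threektensor} (that $\mathcal{B}_{3k}\otimes\varepsilon$ and $\mathcal{B}_{3k}\otimes\zeta$ collapse, by controlled Gaussian eliminations, to a single top-degree generator), which is what actually justifies your heuristic that ``the simplifications are essentially local to the top'' — without it, the claim that the second tensor factor collapses while the $\mathcal{B}_{m-1}$ part is untouched remains an assertion. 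The ``careful bookkeeping'' of surviving morphisms $-S$, $d$, $D$ that you flag as the main obstacle is indeed where nearly all of the paper's work lies (Figures~\ref{fig:contract_eps}--\ref{fig:contract_epsdet} and the three transition lemmas), so your sketch is correct in outline but leaves the substantive part of the argument to be filled in.
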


The proof is a lengthy induction starting with $m = 0$, then cycling through the four quotient complexes that we defined. The key observation is that $\mathcal{B}\otimes \chi$ is contractible for $\chi$ either being $\varepsilon$ or $\zeta$. To see this use (\ref{eq:tensor_greeks}) with $\mathcal{B}\otimes \varepsilon$ to get Figure \ref{fig:contract_eps}.
\begin{figure}[ht]
\begin{tikzpicture}
\node at (0,1) {$\cdots$};
\node at (0.8, 2) {$q^{2+6m}\varepsilon$};
\node at (3.2, 2) {$q^{2+6m}\varepsilon$};
\node at (5.6, 2) {$q^{4+6m}\varepsilon$};
\node at (8, 2) {$q^{6+6m}\varepsilon$};
\node at (10.4,2) {$q^{8+6m}\varepsilon$};
\node at (0.8,1) {$q^{6m}\varepsilon$};
\node at (3.2,1) {$q^{3+6m}\zeta$};
\node at (5.6, 1) {$q^{5+6m}\zeta$};
\node at (8, 1) {$q^{4+6m}\varepsilon$};
\node at (10.4,1) {$q^{6+6m}\varepsilon$};
\node at (0.8,0) {$q^{1+6m}\zeta$};
\node at (3.2,0) {$q^{1+6m}\zeta$};
\node at (5.6, 0) {$q^{3+6m}\zeta$};
\node at (8, 0) {$q^{5+6m}\zeta$};
\node at (10.4,0) {$q^{7+6m}\zeta$};
\node at (11.4, 1) {$\cdots$};
\drawgray{1.5, 0.1}{2.5, 0.9}
\drawgray{1.5, 0.2}{2.5, 1.8}
\drawgrayw{1.5, 0.9}{2.5,0.1}
\drawgrayw{1.5, 1}{2.5, 1}
\drawgrayw{1.5, 1.1}{2.5, 1.9} 
\drawgrayw{1.5, 1.9}{2.5, 1.1}
\drawblack{1.5, 0}{2.5, 0}{ }{-\id}
\drawblack{1.5, 2}{2.5, 2}{ }{-\id}
\drawgray{3.9, 0}{4.9, 0}
\drawgray{3.9, 0.2}{4.9, 1.8}
\drawgrayw{3.9, 1}{4.9, 1}
\drawgray{3.9, 1.1}{4.9, 1.9}
\drawgrayw{3.9, 1.8}{4.9, 0.2}
\drawgrayw{3.9, 1.9}{4.9, 1.1}
\drawgray{3.9, 2}{4.9,2}
\drawblackw{3.9, 0.9}{4.9, 0.1}{sloped}{\id}
\drawgray{6.3, 0}{7.3, 0}
\drawgray{6.3, 0.1}{7.3, 0.9}
\drawgray{6.3, 0.2}{7.3, 1.8}
\drawgray{6.3, 1.1}{7.3, 1.9}
\drawgrayw{6.3, 1.8}{7.3, 0.2}
\drawgray{6.3, 2}{7.3, 2}
\drawblackw{6.3, 0.9}{7.3, 0.1}{sloped}{-\id}
\drawblackw{6.3, 1.9}{7.3, 1.1}{sloped}{-\id}
\drawgray{8.7, 0}{9.7, 0}
\drawgray{8.7, 0.1}{9.7, 0.9}
\drawgray{8.7, 0.2}{9.7, 1.8}
\drawgrayw{8.7, 0.9}{9.7, 0.1}
\drawgrayw{8.7, 1}{9.7, 1}
\drawgrayw{8.7, 1.8}{9.7, 0.2}
\drawgray{8.7, 2}{9.7, 2}
\drawblackw{8.7, 1.9}{9.7, 1.1}{sloped}{\id}
\end{tikzpicture}
\caption{\label{fig:contract_eps}The complex $\mathcal{B}\otimes \varepsilon$ after delooping.}
\end{figure}
The $-\id$-morphisms are either the result of a birth composed with a surgery, or a surgery composed with a death. The $\id$-morphisms arise as a composition of birth, dot, and death. The lightly shown arrows can be worked out in detail, but are irrelevant since they do not survive the cancellations.

We can now perform Gaussian elimination along the various $\pm\id$-morphisms. It is important to note that there are no morphisms which lower the $q$-degree. This allows us to cancel the two $-\id$-morphisms in the first and third column in parallel. Also note that for $m = 0$ the remaining $\varepsilon$ on the left can be cancelled with $\omega\otimes \varepsilon$, as the surgery will turn into an identity.

The argument for $\mathcal{B}\otimes \zeta$ is similar, and we will see a bit more detail in the proof of Lemma \ref{lm:threektensor}.

\begin{lemma}
\label{lm:threektensor}
For $k\geq 0$ and $i,j\in \Z$ we have
\begin{align*}
\mathcal{B}_{3k}\otimes u^iq^j\varepsilon &\simeq u^{4k+i}q^{6k+j}\varepsilon\\
\mathcal{B}_{3k}\otimes u^iq^j\zeta & \simeq u^{4k+i}q^{6k+j}\zeta
\end{align*}
Furthermore, the chain homotopy equivalences can be obtained by finitely many Gauss eliminations, each cancelling a pair of objects in increasing homological degrees.
\end{lemma}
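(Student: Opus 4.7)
The plan is to tensor $\mathcal{B}_{3k}$ on the right with $u^iq^j\varepsilon$ (respectively $u^iq^j\zeta$), deloop every closed circle via Lemma \ref{lm:deloop}, and then cascade through Gaussian eliminations (Lemma \ref{lm:gausselim}) along the $\pm\id$ morphisms, proceeding in order of increasing homological degree. The extra shifts $u^iq^j$ are inert under these operations, so I will take $i=j=0$ without loss of generality and restore the shifts at the end.

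Using the isomorphisms (\ref{eq:tensor_greeks}), the delooped complex $\mathcal{B}_{3k}\otimes\varepsilon$ is exactly the truncation of Figure \ref{fig:contract_eps} at homological degree $4k$. The paragraph following the figure already identifies the $\pm\id$ morphisms and observes that no surviving arrow lowers the $q$-degree, so each pair of cancellations inside one period (four consecutive columns) can be performed without interference with later eliminations. I would argue by induction on $k$: for $k=0$ the complex $\mathcal{B}_0$ is just $\omega$, and $\omega\otimes\varepsilon\cong\varepsilon$ directly. For the inductive step, all cancellations in homological degrees $< 4k$ proceed exactly as in the infinite case, which the text above has already handled.

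The only nontrivial point is what survives at the truncated top. The generators $u^{4k}q^{5+6(k-1)}\alpha$ and $u^{4k}q^{5+6(k-1)}\beta$ of $\mathcal{B}_{3k}$ deloop under tensor with $\varepsilon$ to
\[
u^{4k}q^{6k}\varepsilon\,\oplus\,u^{4k}q^{6k-2}\varepsilon\,\oplus\,u^{4k}q^{6k-1}\zeta.
\]
Reading off the incoming arrows from homological degree $4k-1$ from the figure, one finds a $-\id$ landing in the $q^{6k-2}\varepsilon$ summand and an $\id$ landing in the $q^{6k-1}\zeta$ summand; Gaussian elimination along these two cancels both unwanted summands and leaves precisely $u^{4k}q^{6k}\varepsilon$, as claimed. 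The ordering requirement in the lemma is automatic, since we eliminated strictly in increasing homological degree.

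The $\zeta$ case is entirely symmetric: tensoring with $\zeta$ and using the right-hand column of (\ref{eq:tensor_greeks}) produces the analogue of Figure \ref{fig:contract_eps} with the roles of $\varepsilon$ and $\zeta$ interchanged in each column, and the same cascade of cancellations leaves $u^{4k}q^{6k}\zeta$ at the top. The main obstacle I anticipate is bookkeeping: one must verify explicitly, by unwinding births, surgeries, dots, and deaths against the delooping isomorphism $\Phi = (\Phi_+,\Phi_-)$, that the displayed arrows really are $\pm\id$ with the signs shown, and that the $q$-grading argument forbidding downward morphisms continues to apply after each elimination modifies the complex.
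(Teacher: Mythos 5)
Your proof is correct and follows the same route as the paper: reduce to $i=j=0$, deloop $\mathcal{B}_{3k}\otimes\varepsilon$ (resp.\ $\mathcal{B}_{3k}\otimes\zeta$) to the truncation of Figure \ref{fig:contract_eps} (resp.\ Figure \ref{fig:contract_zet}), and cancel along the $\pm\id$ arrows in increasing homological degree, leaving $u^{4k}q^{6k}\varepsilon$ (resp.\ $u^{4k}q^{6k}\zeta$). The only small inaccuracies are cosmetic: both cancelling arrows into homological degree $4k$ in Figure \ref{fig:contract_eps} carry sign $-\id$ (you wrote $\id$ for one), and the induction on $k$ is unnecessary since truncation by itself guarantees the lower-degree eliminations proceed exactly as in the infinite complex.
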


\begin{proof}
We may assume $i = 0 = j$. For $\varepsilon$ this follows from Figure \ref{fig:contract_eps}, where we stop with $q^{6+6(k-1)}\varepsilon$. For $\zeta$ this is similar, see Figure \ref{fig:contract_zet}.
\end{proof}

\begin{figure}[ht]
\begin{tikzpicture}
\node at (-0.4,1) {$\cdots$};
\node at (0.8, 2) {$q^{3+6m}\varepsilon$};
\node at (4.4, 2) {$q^{5+6m}\varepsilon$};
\node at (8, 2) {$q^{5+6m}\varepsilon$};
\node at (0.8, 1) {$q^{1+6m}\varepsilon$};
\node at (4.4, 1) {$q^{3+6m}\varepsilon$};
\node at (8, 1) {$q^{6+6m}\zeta$};
\node at (0.8, 0) {$q^{2+6m}\zeta$};
\node at (4.4, 0) {$q^{4+6m}\zeta$};
\node at (8, 0) {$q^{4+6m}\zeta$};
\drawgray{1.5,0}{3.7,0}
\drawgray{1.5,0.1}{3.7, 0.9}
\drawgray{1.5,0.2}{3.7, 1.8}
\drawgrayw{1.5,0.9}{3.7, 0.1}
\drawgrayw{1.5,1}{3.7,1}
\drawgray{1.5,1.1}{3.7,1.9}
\drawgrayw{1.5,1.8}{3.7,0.2}
\drawgray{1.5,2}{3.7,2}
\drawblackw{1.5, 1.9}{3.7, 1.1}{sloped}{\id}
\drawblack{5.1, 0}{7.3,0}{ }{-\id}
\drawblack{5.1, 2}{7.3,2}{ }{-\id}
\drawgray{5.1, 0.9}{7.3, 0.1}
\drawgray{5.1, 1}{7.3, 1}
\drawgray{5.1, 1.1}{7.3, 1.9}
\drawblackw{5.1, 0.2}{7.3, 1.8}{sloped}{S}
\drawblackw{5.1, 0.1}{7.3, 0.9}{sloped}{-/\hspace{-7pt}\bullet+h}
\drawblackw{5.1, 1.9}{7.3, 1.1}{sloped}{S}
\end{tikzpicture}
\caption{\label{fig:contract_zet}The complex $\mathcal{B}_{3k}\otimes \zeta$ after delooping.}
\end{figure}
We note that Figure \ref{fig:contract_zet} contains slightly more information in the morphisms between the last two columns. This will come in handy later.  For this reason, we also add this information for $\mathcal{B}_{3k}\otimes \varepsilon$ in Figure \ref{fig:contract_epsdet}.
\begin{figure}[ht]
\begin{tikzpicture}
\node at (-0.4,1) {$\cdots$};
\node at (0.8, 2) {$q^{2+6m}\varepsilon$};
\node at (4.4, 2) {$q^{4+6m}\varepsilon$};
\node at (8, 2) {$q^{6+6m}\varepsilon$};
\node at (0.8, 1) {$q^{3+6m}\zeta$};
\node at (4.4, 1) {$q^{5+6m}\zeta$};
\node at (8, 1) {$q^{4+6m}\varepsilon$};
\node at (0.8, 0) {$q^{1+6m}\zeta$};
\node at (4.4, 0) {$q^{3+6m}\zeta$};
\node at (8, 0) {$q^{5+6m}\zeta$};
\drawgray{1.5,0}{3.7,0}
\drawgray{1.5,0.1}{3.7, 0.9}
\drawgray{1.5,0.2}{3.7, 1.8}
\drawgrayw{1.5,1.9}{3.7, 1.1}
\drawgrayw{1.5,1}{3.7,1}
\drawgray{1.5,1.1}{3.7,1.9}
\drawgrayw{1.5,1.8}{3.7,0.2}
\drawgray{1.5,2}{3.7,2}
\drawgray{5.1, 0}{7.3, 0.}
\drawgray{5.1, 0.1}{7.3, 0.9}
\drawgray{5.1, 0.2}{7.3, 1.8}
\drawblackw{5.1, 1.1}{7.3, 1.9}{sloped, near end}{S}
\drawblackw{1.5, 0.9}{3.7, 0.1}{sloped}{\id}
\drawblackw{5.1, 0.9}{7.3,0.1}{sloped}{-\id}
\drawblackw{5.1, 1.9}{7.3,1.1}{sloped, near end}{-\id}
\drawblack{5.1, 2}{7.3, 2}{ }{-\backslash\hspace{-7pt}\bullet+h}
\drawblackw{5.1, 1.8}{7.3, 0.2}{sloped, near end}{S}
\end{tikzpicture}
\caption{\label{fig:contract_epsdet}The complex $\mathcal{B}_{3k}\otimes \varepsilon$ after delooping.}
\end{figure}

\begin{lemma}\label{lm:3kto3k+1}
Let $k\geq 0$. If $\CBN(T_{(ab)^{3k}};\Z[h])$ is chain homotopy equivalent to $q^{6k}\mathcal{B}_{3k}$, then $\CBN(T_{(ab)^{3k+1}};\Z[h])$ is chain homotopy equivalent to $q^{6k+2}\mathcal{B}_{3k+1}$.
\end{lemma}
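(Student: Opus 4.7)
The plan is to tensor the inductive hypothesis $\CBN(T_{(ab)^{3k}};\Z[h])\simeq q^{6k}\mathcal{B}_{3k}$ with $\CBN(T_{ab};\Z[h])$ and simplify. First I would make $\CBN(T_{ab};\Z[h])$ explicit: from $\CBN(T_a)=(q\omega \to q^2\alpha)$ and $\CBN(T_b)=(q\omega \to q^2\beta)$ one obtains (using Figure~\ref{fig:cheatsheet} to identify the composed smoothing with $\gamma$) the three-term complex
\[
q^2\omega \longrightarrow q^3\alpha \oplus q^3\beta \longrightarrow q^4\gamma,
\]
whose differentials are (signed) saddles. Consequently, $\CBN(T_{(ab)^{3k+1}};\Z[h])$ is chain homotopy equivalent to the total complex of $q^{6k}\mathcal{B}_{3k}\otimes\CBN(T_{ab};\Z[h])$, a double complex with three columns $q^{6k+2}\mathcal{B}_{3k}$, $q^{6k+3}(\mathcal{B}_{3k}\otimes\alpha)\oplus q^{6k+3}(\mathcal{B}_{3k}\otimes\beta)$, and $q^{6k+4}\mathcal{B}_{3k}\otimes\gamma$.

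Next, I would simplify Columns~1 and~2. By Lemma~\ref{lm:threektensor}, $\mathcal{B}_{3k}\otimes\varepsilon\simeq u^{4k}q^{6k}\varepsilon$ and $\mathcal{B}_{3k}\otimes\zeta\simeq u^{4k}q^{6k}\zeta$, via finitely many Gauss eliminations cancelling pairs of objects in increasing homological degree. Since $\alpha=\varepsilon\otimes\varepsilon^\ast$, $\beta=\zeta\otimes\zeta^\ast$, and $\gamma=\varepsilon\otimes\zeta^\ast$, right-tensoring with the appropriate dual object yields $\mathcal{B}_{3k}\otimes\alpha\simeq u^{4k}q^{6k}\alpha$, $\mathcal{B}_{3k}\otimes\beta\simeq u^{4k}q^{6k}\beta$, and $\mathcal{B}_{3k}\otimes\gamma\simeq u^{4k}q^{6k}\gamma$. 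Applying these equivalences inside the double complex replaces each of the last two columns by a single object at homological degree $4k$ of the original column. Accounting for the $+1$ and $+2$ shifts from their positions in the $\CBN(T_{ab})$-direction, the complex becomes $q^{6k+2}\mathcal{B}_{3k}$ together with two additional objects $u^{4k+1}q^{6k+3}\alpha\oplus u^{4k+1}q^{6k+3}\beta$ in homological degree $4k+1$ and a single additional object $u^{4k+2}q^{6k+4}\gamma$ in degree $4k+2$, matching the object content of $q^{6k+2}\mathcal{B}_{3k+1}$ precisely.

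It remains to check that the induced differentials match those of $\mathcal{B}_{3k+1}$. The morphism from the two new $\alpha,\beta$ generators in degree $4k+1$ to the new $\gamma$ in degree $4k+2$ descends directly from the $\CBN(T_{ab})$-differential and, up to an overall sign, is the prescribed pair $(-S, S)$. The more delicate morphisms are those connecting the old top of $q^{6k+2}\mathcal{B}_{3k}$ (the $\alpha$- and $\beta$-generators in homological degree $4k$) to the newly added $\alpha,\beta$ in degree $4k+1$; these originate from the vertical saddle $\omega\to\alpha$ (respectively $\omega\to\beta$) of $\CBN(T_{ab})$, combined with how the Gauss eliminations in Lemma~\ref{lm:threektensor} interact with the delooping of the circles that appear at the top of the $\alpha$- and $\beta$-columns of $\mathcal{B}_{3k}$. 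Following the tracking recorded in Figures~\ref{fig:contract_epsdet} and~\ref{fig:contract_zet} (which is precisely why those finer morphism data between the last two columns were carried along), one obtains the combinations of dot cobordisms and $h$-multiples that constitute the morphisms $d$ (within each letter) and $D$ (between $\alpha$ and $\beta$) in $\mathcal{B}$. I expect this morphism tracking to be the main technical obstacle; once completed, the simplified complex is exactly $q^{6k+2}\mathcal{B}_{3k+1}$, finishing the induction step.
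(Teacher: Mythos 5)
Your approach is precisely the paper's: express $\CBN(T_{(ab)^{3k+1}})$ as $q^{6k}\mathcal{B}_{3k}\otimes\CBN(T_{ab})$, expand $\CBN(T_{ab})$ into the three-term complex $q^2\omega\to q^3\alpha\oplus q^3\beta\to q^4\gamma$, view the result as a three-column total complex, collapse the $\alpha$-, $\beta$-, and $\gamma$-columns via Lemma~\ref{lm:threektensor}, and then verify the induced morphisms against those of $\mathcal{B}_{3k+1}$ by tracking through the Gauss eliminations using the detailed morphism data in Figures~\ref{fig:contract_epsdet} and~\ref{fig:contract_zet}. That is the correct outline and the correct set of tools.

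Two points. First, a bookkeeping slip: since $\mathcal{B}_{3k}\otimes\alpha\simeq u^{4k}q^{6k}\alpha$ (and likewise for $\beta,\gamma$), the collapsed generators in the three columns sit at $u^{4k+1}q^{12k+3}\alpha\oplus u^{4k+1}q^{12k+3}\beta$ and $u^{4k+2}q^{12k+4}\gamma$, not at $q^{6k+3}$ and $q^{6k+4}$ as you wrote — you dropped the $q^{6k}$ coming from Lemma~\ref{lm:threektensor}. With the correct degrees these do indeed match the top of $q^{6k+2}\mathcal{B}_{3k+1}$, so the conclusion is unaffected.

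Second, and more substantively, the ``morphism tracking'' you defer is the entire content of the proof, and it is not just a matter of reading off the figures. In the paper's computation of $f_1$ one first deloops the saddle $\omega\to\alpha$, which produces both a map to the surviving $q^{12k+3}\alpha$ of the form $\mathbin{\text{dot}}-h$ \emph{and} an $\id$ onto an object that will be cancelled; Gaussian elimination of that object contributes a second dotting term, and then a further cancellation against a $\delta$-object contributes a double surgery, which must be rewritten as a sum of dottings minus $h$ via the neck-cutting relation (\ref{eq:localrelb}) before everything assembles to $d$. The same pattern recurs for $f_2=D$, $g_1,g_2$, $h_1,h_2$. So the figures supply the necessary raw data, but the identifications require several rounds of cancellation bookkeeping together with the neck-cutting relation; stating that the tracking ``one obtains \dots $d$ and $D$'' without performing it leaves the key verification unestablished. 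You have correctly located where the work is and what ingredients are needed, but a complete proof still has to execute those cancellations.
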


\begin{proof}
We have 
\begin{align*}
\CBN(T_{(ab)^{3k+1}};\Z[h])& \cong \CBN(T_{(ab)^{3k}};\Z[h])\otimes \CBN(T_{ab};\Z[h]) \\
& \simeq q^{6k}\mathcal{B}_{3k}\otimes \CBN(T_{ab};\Z[h]).
\end{align*}
Now $\CBN(T_{ab};\Z[h])$ is given by
\[
\begin{tikzpicture}
\node at (0, 1) {$q^2\omega$};
\node at (2.5, 1) {$uq^3\alpha$};
\node at (2.5, 0) {$uq^3\beta$};
\node at (5, 0) {$u^2q^4\gamma$};
\draw[->] (0.4, 1) -- node [above, scale = 0.7] {$S$} (2, 1);
\draw[->] (0.4, 0.9) -- node [above, scale = 0.7, sloped] {$S$} (2, 0);
\draw[->] (3, 1) -- node[above, scale = 0.7, sloped] {$-S$} (4.5, 0.1);
\draw[->] (3, 0) -- node[above, scale = 0.7] {$S$} (4.5, 0);
\end{tikzpicture}
\]
and from this it follows that $q^{6k}\mathcal{B}_{3k}\otimes \CBN(T_{ab};\Z[h])$ is the total complex of a diagram of cochain complexes given by
\[
\begin{tikzpicture}
\node at (0, 1.5) {$q^{6k+2}\mathcal{B}_{3k}$};
\node at (5, 1.5) {$q^{6k+3}\mathcal{B}_{3k}\otimes \varepsilon\otimes \varepsilon^\ast$};
\node at (5, 0) {$q^{6k+3}\mathcal{B}_{3k}\otimes \zeta\otimes \zeta^\ast$};
\node at (10, 0) {$q^{6k+4}\mathcal{B}_{3k}\otimes \varepsilon \otimes \zeta^\ast$};
\draw[->] (0.8, 1.5) -- node [above, scale = 0.7] {$S$} (3.5, 1.5);
\draw[->] (0.8, 1.4) -- node [above, scale = 0.7, sloped] {$S$} (3.5, 0);
\draw[->] (6.5, 1.5) -- node [above, scale = 0.7, sloped] {$-\id\otimes \id\otimes S$} (8.5, 0.1);
\draw[->] (6.5, 0) -- node [above, scale = 0.7] {$\id \otimes S \otimes \id$} (8.5, 0); 
\end{tikzpicture}
\]
By Lemma \ref{lm:threektensor} this total complex is chain homotopy equivalent by a sequence of Gaussian eliminations to
\[
\begin{tikzpicture}
\node at (0, 0.75) {$\cdots$};
\node at (1, 1.5) {$q^{1+12k} \alpha$};
\node at (1, 0) {$q^{1+12k} \beta$};
\node at (5, 1.5) {$q^{3+12k} \alpha$};
\node at (9, 1.5) {$q^{4+12k} \gamma$};
\node at (5, 0) {$q^{3+12k} \beta$};
\drawblack{1.8, 1.5}{4.2, 1.5} { }{f_1}
\drawblack{1.8, 0.1}{4.2, 1.4} {sloped, near start}{f_2}
\drawblack{1.8,0}{4.2,0} { }{g_2}
\drawblackw{1.8, 1.4}{4.2, 0.1}{sloped, near end}{g_1}
\drawblack{5.8, 1.5}{8.2, 1.5}{ }{h_1}
\drawblack{5.8, 0}{8.2, 1.4}{sloped}{h_2}
\end{tikzpicture}
\]
This has the right generators for $q^{6k+2}\mathcal{B}_{3k+1}$, but we need to work out the morphisms to see that it agrees with it as a cochain complex. To work out $f_1$, note that $q^{3+12k}\alpha$ is the object corresponding to $q^{6+6m}\varepsilon$ in Figure \ref{fig:contract_epsdet}. Before the cancellations, $q^{1+12k}\alpha$ maps to $q^{3+12k}\alpha$ with $\begin{tikzpicture}\smcup{-0.15}{-0.5}{0.6}{thick}\node at (0,-0.3) {$\bullet$};\end{tikzpicture}-h$, but it also maps to the soon to be cancelled $q^{1+12k}\alpha$ with $\id$. Cancelling this object (the $q^{4+6m}\varepsilon$ in Figure \ref{fig:contract_epsdet}) lead to the morphism between $q^{1+6k}\alpha$ and $q^{3+6k}\alpha$ being given by 
\[
\begin{tikzpicture}
\smcup{-0.15}{-0.5}{0.6}{thick}
\node at (0,-0.3) {$\bullet$}; 
\node at (0.6,-0.2) {$-h-$};
\draw[thick] (1.1, 0) -- (1.1, -0.4);
\node at (1.1, -0.2) {$\bullet$};
\node at (1.5, -0.2) {$+h.$};
\end{tikzpicture}
\]
But there is also now a morphism $S$ from $q^{1+12k}\alpha$ to the soon to be cancelled $q^{2+12k}\delta$ (the $q^{5+6m}\zeta$ in Figure \ref{fig:contract_epsdet}). Cancelling this object adds an extra double surgery $D$ from $q^{1+12k}\alpha$ to $q^{3+12k}\alpha$. But this double surgery is simply going from $\varepsilon$ to $\zeta$ and back, so from the neck-cutting relation we can replace this with
\[
\begin{tikzpicture}
\smcap{-0.15}{-0.5}{0.6}{thick}
\node at (0,-0.3) {$\bullet$}; 
\node at (0.5,-0.3) {$+$};
\draw[thick] (0.8, -0.1) -- (0.8, -0.5);
\node at (0.8, -0.3) {$\bullet$};
\node at (1.2, -0.3) {$-h.$};
\end{tikzpicture}
\]
Therefore
\[
\begin{tikzpicture}
\node at (0,0.2) {$f_1 = $};
\smcap{0.5}{0}{0.6}{thick}
\node at (1.1,0.2) {$+$};
\smcup{1.4}{0}{0.6}{thick}
\node at (2.2, 0.2) {$-\,h.$};
\node at (0.65,0.2) {$\bullet$};
\node at (1.55, 0.2) {$\bullet$};
\end{tikzpicture}
\]
as required. Also notice that $q^{1+12k}\beta$ did not map to $q^{3+12k}\alpha$ until this last cancellation. Since it does map to $q^{2+12k}\delta$ with a surgery, it follows from this cancellation that $f_2 = D$ is a double surgery from $\beta$ to $\alpha$.

The argument that $g_1$ and $g_2$ are as in $\mathcal{B}_{3k+1}$ is completely analogous, using Figure \ref{fig:contract_zet} instead of Figure \ref{fig:contract_epsdet}.

To see that $h_1$ and $h_2$ are as needed, assume we have done the cancellations in $q^{6k+3}\mathcal{B}_{3k}\otimes \alpha$ and $q^{6k+3}\mathcal{B}_{3k}\otimes \beta$, but $q^{6k+4}\mathcal{B}_{3k}\otimes \gamma$ still looks as in Figure \ref{fig:contract_epsdet} (note that since this complex has no morphisms to the other two complexes, so the cancellations in those two complexes do not affect it). So we now want to perform the cancellations in $q^{6k+4}\mathcal{B}_{3k}\otimes \gamma$.

The surviving generator $q^{4+12k}\gamma$ corresponds to $q^{6+6m}\varepsilon$ in Figure \ref{fig:contract_epsdet}, and $q^{3+12k}\alpha$ maps to it with $-S$. Since $q^{3+12k}\alpha$ does not map to the other two generators, the cancellations do not change the morphism, and $h_1 = -S$, as required.

The generator $q^{3+12k}\beta$, which comes from $q^{6+6m}\zeta$ in Figure \ref{fig:contract_zet} is mapped to what corresponds to $q^{5+6m}\zeta$ in Figure \ref{fig:contract_epsdet}, which in the total complex is of the form $q^{3+12k}\beta$ and this morphism is simply the identity (a surgery followed by death). After cancelling this element, we can read off Figure \ref{fig:contract_epsdet} that $h_2 = S$, as required.
\end{proof}

\begin{lemma}\label{lm:3k+1to3k+2}
Let $k\geq 0$. 
\begin{enumerate}
\item If $\CBN(T_{(ab)^{3k+1}};\Z[h])$ is chain homotopy equivalent to $q^{6k+2}\mathcal{B}_{3k+1}$, then $\CBN(T_{(ab)^{3k+1}a};\Z[h])$ is chain homotopy equivalent to $q^{6k+3}\mathcal{B}^a_{3k+1}$.
\item If $\CBN(T_{(ab)^{3k+1}a};\Z[h])$ is chain homotopy equivalent to $q^{6k+3}\mathcal{B}^a_{3k+1}$, then $\CBN(T_{(ab)^{3k+2}};\Z[h])$ is chain homotopy equivalent to $q^{6k+4}\mathcal{B}_{3k+2}$.
\end{enumerate}
\end{lemma}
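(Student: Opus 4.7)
The plan is to mimic the proof of Lemma \ref{lm:3kto3k+1}. For part (1), the hypothesis and the tangle tensor formula give $\CBN(T_{(ab)^{3k+1}a};\Z[h]) \simeq q^{6k+2}\mathcal{B}_{3k+1}\otimes \CBN(T_a;\Z[h])$, and since $\CBN(T_a;\Z[h])$ is the two-term complex $q\omega \xrightarrow{S} q^2\varepsilon\otimes\varepsilon^*$, this is the mapping cone of a saddle-valued map
\[
q^{6k+3}\mathcal{B}_{3k+1} \longrightarrow q^{6k+4}\mathcal{B}_{3k+1}\otimes\varepsilon\otimes\varepsilon^*.
\]
For part (2), an identical argument with $\CBN(T_b;\Z[h]) = q\omega \xrightarrow{S} q^2\zeta\otimes\zeta^*$ presents $\CBN(T_{(ab)^{3k+2}};\Z[h])$ as the mapping cone of $q^{6k+4}\mathcal{B}^a_{3k+1} \to q^{6k+5}\mathcal{B}^a_{3k+1}\otimes\zeta\otimes\zeta^*$.

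The first step is an extension of Lemma \ref{lm:threektensor}: after delooping and using (\ref{eq:tensor_greeks}), the Gaussian eliminations recorded in Figures \ref{fig:contract_eps}--\ref{fig:contract_epsdet} are confined to the $\mathcal{B}_{3k}$-subcomplex in homological degrees $\leq 4k$. They cancel pairs of objects in strictly increasing homological degree, so they commute with both the outer mapping-cone differential and with the attachment of the additional tail generators of $\mathcal{B}_{3k+1}$ (resp.\ $\mathcal{B}^a_{3k+1}$), and can therefore be performed in parallel. The right-hand term of the cone then collapses to a small complex supported near the top of the tail. In part (1), reassembling the surviving generators using $\alpha = \varepsilon\otimes\varepsilon^*$ yields objects of types $\alpha,\beta,\gamma,\delta$ in the required bidegrees; the critical point is that tensoring the terminal $\gamma$ of $\mathcal{B}_{3k+1}$ with $\alpha$ and delooping supplies the extra generator $u^{2+4k}q^{2+6k}\delta$ that is exactly what distinguishes $\mathcal{B}^a_{3k+1}$ from $\mathcal{B}_{3k+1}$. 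In part (2), the analogous analysis, followed by one further top-degree Gaussian elimination removing an unwanted terminal $\delta$, leaves precisely the generators of $q^{6k+4}\mathcal{B}_{3k+2}$.

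The second step is to verify the remaining morphisms match those in the definitions of $\mathcal{B}^a_{3k+1}$ and $\mathcal{B}_{3k+2}$. Morphisms confined to homological degrees $\leq 4k$ are inherited from the hypothesis and require no further work. For the new morphisms involving the tail generators, I follow the template of $f_1,f_2,g_1,g_2,h_1,h_2$ from the proof of Lemma \ref{lm:3kto3k+1}: track each composition through the parallel Gaussian eliminations, invoke the Delooping Lemma \ref{lm:deloop} to read off births and deaths of circles, and apply the neck-cutting relation (\ref{eq:localrelb}) to rewrite any double-surgery composition $D$ arising between an $\varepsilon$-type and a $\zeta$-type object as a dotted-cap-plus-dotted-cup combination $c$ or $d$ from Figure \ref{fig:cheatsheet}. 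The main obstacle is the sheer bookkeeping of these new edges, especially where the tail of $\mathcal{B}_{3k+1}$ (which already differs from $\mathcal{B}_{3k}$) is involved; no new conceptual input is required beyond the techniques already deployed for Lemma \ref{lm:3kto3k+1}.
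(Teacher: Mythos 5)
Your overall strategy --- present $\CBN(T_{(ab)^{3k+1}a};\Z[h])$ and $\CBN(T_{(ab)^{3k+2}};\Z[h])$ as mapping cones, collapse the tensor factors $\mathcal{B}_{3k+1}\otimes\alpha$ and $\mathcal{B}^a_{3k+1}\otimes\beta$ by Gaussian elimination, and then match the surviving morphisms --- is exactly the paper's approach. However, two of your concrete claims about where the new terminal generators come from are wrong, and these are not side remarks: they are precisely where the lemma's content lies.

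In part (1) you assert that the extra $u^{2+4k}q^{2+6k}\delta$ is ``supplied by tensoring the terminal $\gamma$ of $\mathcal{B}_{3k+1}$ with $\alpha$ and delooping.'' This cannot be right: by (\ref{eq:tensor_greeks}) we have $\gamma\otimes\varepsilon\cong\varepsilon$ with no closed circle created, so $\gamma\otimes\alpha = (\gamma\otimes\varepsilon)\otimes\varepsilon^\ast = \alpha$, there is no delooping, and this top-degree $\alpha$ is in fact one of the objects that is cancelled (it is the top object of the second column of Figure~\ref{fig:contract_eps} once restricted to $\mathcal{B}_{3k+1}$). The surviving object of $\mathcal{B}_{3k+1}\otimes\alpha$ after all the Gaussian eliminations is indeed a $\delta$, but it comes from $q^{1+6k}\beta\otimes\alpha$ in homological degree $4k+1$ --- the $q^{1+6m}\zeta$ of Figure~\ref{fig:contract_eps}, tensored with $\varepsilon^\ast$ to give $\zeta\otimes\varepsilon^\ast=\delta$ --- and not from the terminal $\gamma$. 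Pinpointing the correct source is essential, because the morphisms into and out of the surviving $\delta$ (which you must then match against those of $\mathcal{B}^a_{3k+1}$) are governed by it.

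In part (2) you describe the final step as ``one further top-degree Gaussian elimination removing an unwanted terminal $\delta$.'' There is no such step here: $\mathcal{B}^a_{3k+1}\otimes\beta$ collapses directly to a single generator of type $\gamma$ (the object corresponding to $q^{3+6m}\varepsilon$ in the top-left corner of Figure~\ref{fig:contract_zet}, tensored with $\zeta^\ast$), with nothing left over, and the mapping cone already has exactly the generators of $q^{6k+4}\mathcal{B}_{3k+2}$. The move you are describing --- eliminating a stray bottom $\delta$ at the top of the complex --- actually occurs in the proof of Lemma~\ref{lm:3k+2to3k+3}, so you are likely conflating the two lemmas. As written, your outline of part (2) would not reach the correct target complex.
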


\begin{proof}
As in the proof of Lemma \ref{lm:3kto3k+1} we have
\[
\CBN(T_{(ab)^{3k+1}a};\Z[h])\simeq q^{6k+2}\mathcal{B}_{3k+1}\otimes \CBN(T_a;\Z[h]),
\]
and $\CBN(T_a;\Z[h])$ is given by
\[
\begin{tikzpicture}
\node at (0,0) {$q\omega$};
\node at (2,0) {$uq^2\alpha.$};
\draw[->] (0.3,0) -- node [above, scale = 0.7] {$S$} (1.5, 0);
\end{tikzpicture}
\]
Note that $\mathcal{B}_{3k+1}\otimes \alpha = \mathcal{B}_{3k+1}\otimes \varepsilon\otimes \varepsilon^\ast$, and from Figure \ref{fig:contract_eps}, where we only need the objects in the first column, and the top object in the second column, we see that this complex collapses to just one object, the one corresponding to $q^{1+6m}\zeta$.

The extra element after cancellations is therefore of the form $u^{2+4k}q^{5+12k}\delta$ and we need to check what the morphisms starting from $u^{1+4k}q^{4+12k}\alpha$ and $u^{1+4k}q^{4+12k}\beta$ are. The morphism from $\beta$ to $\delta$ is $-S$: the minus sign comes from the odd homological degree that $\beta$ is in, and the surgery is the one from $\CBN(T_a;\Z[h])$. 

To get the morphism starting at $\alpha$ note that the surgery from $\CBN(T_a;\Z[h])$ turns into a $-\id$ to an $\alpha$ which is going to get cancelled. Indeed, we can think of this $\alpha$ to correspond to $q^{6+6m}\varepsilon$ in Figure \ref{fig:contract_eps}, and we need to work out the light arrow from $q^{6+6m}\varepsilon$ to $q^{7+6m}\zeta$. This morphism is the composition of a double surgery with a birth, which turns into just one surgery. So, after cancellation, the morphism from $u^{1+4k}q^{4+12k}\alpha$ to $u^{2+4k}q^{5+12k}\delta$ is $S$, as required.

The proof of (2) is similar, using tensor product with $\CBN(T_b;\Z[h])$. One observes that $\mathcal{B}^a_{3k+1}\otimes \beta$ is chain homotopy equivalent to a complex with exactly one generator, whose smoothing is $\gamma$ and which is corresponding to the $q^{3+6m}\varepsilon$ in the top-left corner of Figure \ref{fig:contract_zet}.
\end{proof}

\begin{lemma}\label{lm:3k+2to3k+3}
Let $k\geq 0$. If $\CBN(T_{(ab)^{3k+2}};\Z[h])$ is chain homotopy equivalent to $q^{6k+4}\mathcal{B}_{3k+2}$, then $\CBN(T_{(ab)^{3k+3}};\Z[h])$ is chain homotopy equivalent to $q^{6k+6}\mathcal{B}^a_{3k+3}$.
\end{lemma}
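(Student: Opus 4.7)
The strategy parallels the proof of Lemma \ref{lm:3kto3k+1}. By the inductive hypothesis and the identity $(ab)^{3k+3}=(ab)^{3k+2}\cdot ab$, we have
\[
\CBN(T_{(ab)^{3k+3}};\Z[h]) \simeq q^{6k+4}\mathcal{B}_{3k+2} \otimes \CBN(T_{ab};\Z[h]).
\]
Using the small four-object form of $\CBN(T_{ab};\Z[h])$ that appeared in the proof of Lemma \ref{lm:3kto3k+1}, this is the total complex of a square whose corners are $q^{6k+6}\mathcal{B}_{3k+2}$, $q^{6k+7}\mathcal{B}_{3k+2}\otimes\varepsilon\otimes\varepsilon^\ast$, $q^{6k+7}\mathcal{B}_{3k+2}\otimes\zeta\otimes\zeta^\ast$, and $q^{6k+8}\mathcal{B}_{3k+2}\otimes\varepsilon\otimes\zeta^\ast$, joined by the relevant surgeries.

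The first technical step is to establish analogs of Lemma \ref{lm:threektensor} for $\mathcal{B}_{3k+2}$ tensored with $\varepsilon$ or $\zeta$. Since $\mathcal{B}_{3k+2}$ contains $\mathcal{B}_{3k}$ as an initial segment and extends it by the three positions $1+4k, 2+4k, 3+4k$ (with the terminal $\delta$ at position $3+4k$ removed), the same sequence of $\pm\id$ Gaussian eliminations displayed in Figures \ref{fig:contract_eps}, \ref{fig:contract_zet}, and \ref{fig:contract_epsdet} applies verbatim on the $\mathcal{B}_{3k}$-part. The three additional positions produce further identity morphisms after delooping $\alpha\otimes\varepsilon$, $\beta\otimes\varepsilon$, $\gamma\otimes\varepsilon$, $\delta\otimes\varepsilon$ according to (\ref{eq:tensor_greeks}), and these cancel most of the new generators. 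Reading the remaining cancellations off the truncated pattern shows that $\mathcal{B}_{3k+2}\otimes\varepsilon$ and $\mathcal{B}_{3k+2}\otimes\zeta$ are each chain homotopy equivalent, by finitely many Gauss eliminations in increasing homological degrees, to short residual complexes with only a handful of objects.

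Substituting these simplified forms back into the tensor square and carrying out another round of delooping and Gaussian eliminations, again in increasing homological degrees, will produce a complex whose generators coincide with those of $q^{6k+6}\mathcal{B}^a_{3k+3}$. The critical observation at the top is that the generator arising from $\mathcal{B}_{3k+2}\otimes\gamma$ at homological position $5+4k$ with smoothing $\gamma\otimes\gamma\cong\gamma$ lies one step beyond the top position $4+4k$ of $\mathcal{B}^a_{3k+3}$ and must be cancelled. Its cancellation partner is furnished by $\mathcal{B}_{3k+2}\otimes\beta$, where $\gamma\otimes\beta \cong q\gamma\oplus q^{-1}\gamma$ produces the required delooping summand that maps to $u^{5+4k}\gamma\otimes\gamma$ by an identity after the earlier cancellations.

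The main obstacle is the careful bookkeeping, already exhibited in the proof of Lemma \ref{lm:3kto3k+1}, of how each Gaussian cancellation modifies the surviving morphisms via the formula $\varepsilon-\gamma\varphi^{-1}\delta$ of Lemma \ref{lm:gausselim}. Whenever a cancellation produces a composite double-surgery between $\varepsilon$- and $\zeta$-type smoothings, the neck-cutting relation (\ref{eq:localrelb}) must be invoked to rewrite it as a combination of dot maps and $h$, exactly as was done in the computation of $f_1$ in the proof of Lemma \ref{lm:3kto3k+1}. Once every surviving morphism has been matched against the corresponding surgery, double-surgery, or $d$ map in the definition of $\mathcal{B}^a_{3k+3}$, the required chain homotopy equivalence follows.
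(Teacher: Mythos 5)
Your proposal takes a genuinely different route from the paper. You tensor $\mathcal{B}_{3k+2}$ directly with the four-object complex $\CBN(T_{ab};\Z[h])$, mimicking the proof of Lemma \ref{lm:3kto3k+1}. The paper instead proceeds in two steps, tensoring first with $\CBN(T_a;\Z[h])$ and then with $\CBN(T_b;\Z[h])$, and explicitly warns that ``the behaviour, particularly in the first step, is slightly different'' from Lemma \ref{lm:3k+1to3k+2} -- a warning that applies with even more force to an attempted copy of Lemma \ref{lm:3kto3k+1}. The reason is structural: the whole engine of Lemma \ref{lm:3kto3k+1} is Lemma \ref{lm:threektensor}, which collapses $\mathcal{B}_{3k}\otimes\varepsilon$ and $\mathcal{B}_{3k}\otimes\zeta$ to a \emph{single} object. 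That is emphatically false for $\mathcal{B}_{3k+2}$: tracing the cancellation pattern of Figure \ref{fig:contract_eps} past degree $4k$, the extra positions $4k+1,4k+2,4k+3$ (with the $\delta$ at $4k+3$ removed) consume the old survivor $q^{6k}\varepsilon$ and leave a two-object residual $q^{6k+3}\zeta\to q^{6k+4}\varepsilon$ sitting in degrees $4k+2$ and $4k+3$, and similarly for $\zeta$. You acknowledge this (``short residual complexes with only a handful of objects''), but this is precisely where the work is: one has to pin down the $q$-degrees and the residual morphism (it is a surgery with corrections coming from the cut-off $\id$ of Figure \ref{fig:contract_eps}), and then establish that, after tensoring with $\varepsilon^\ast$ and $\zeta^\ast$ and assembling the four-cornered total complex, the morphisms into and out of the surviving objects at degrees $4k+3$, $4k+4$, $4k+5$ match those in $\mathcal{B}_{3k+3}$ exactly. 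That is the entire content of the lemma, and it is asserted rather than proved.

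A secondary concern is the top-degree bookkeeping. It is not only the single $\gamma\otimes\gamma$ at degree $4k+5$ that is in excess. After reducing the four corners as above, the objects at degree $4k+4$ come from $\mathcal{B}_{3k+2}\otimes\alpha$, $\mathcal{B}_{3k+2}\otimes\beta$, \emph{and} $\mathcal{B}_{3k+2}\otimes\gamma$, with smoothings $\alpha$, $\gamma$, $\beta$ respectively -- so after the one cancellation you describe you must still verify that the surviving pair has smoothings $\alpha,\beta$ (as required by $\mathcal{B}_{3(k+1)}$), correct $q$-degrees $q^{5+6k}$, and the correct morphism $d$ coming into it. None of these verifications is automatic, and the paper does exactly this bookkeeping, just organized over two simpler tensor products. (As a small aside, the $\mathcal{B}^a_{3k+3}$ in the statement is a typo for $\mathcal{B}_{3k+3}=\mathcal{B}_{3(k+1)}$; your reading, placing the top in degree $4k+4$, is the right one.) So the strategy is plausible but the proposal rests on an unestablished analogue of Lemma \ref{lm:threektensor} and on unverified bookkeeping, which is exactly what the paper's two-step computation is there to supply.
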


\begin{proof}
As in Lemma \ref{lm:3k+1to3k+2} we get from $(ab)^{3k+2}$ to $(ab)^{3k+3}$ in two steps. The behaviour, particularly in the first step, is slightly different, so we give a bit more detail. Consider $q^{6k+4}\mathcal{B}_{3k+2}\otimes \CBN(T_a;\Z[h])$. After several Gaussian eliminations in lower homological degrees this ends with
\[
\scalebox{1}[0.95]{
\begin{tikzpicture}
\node at (0,3.75) {$\cdots$};
\node at (1, 4.5) {$q^{5+12k}\alpha$};
\node at (4, 4.5) {$q^{6+12k}\gamma$};
\node at (7, 4.5) {$q^{8+12k}\gamma$};
\node at (1,3) {$q^{5+12k}\beta$};
\node at (4, 3) {$q^{6+12k}\delta$};
\node at (7, 3) {$q^{7+12k}\alpha$};
\node at (10, 3) {$q^{9+12k}\alpha$};
\node at (4, 1.5) {$q^{7+12k}\alpha$};
\node at (7, 1.5) {$q^{8+12k}\delta$};
\node at (4, 0) {$q^{6+12k}\delta$};
\node at (7,0) {$q^{6+12k}\delta$};
\drawblack{1.7, 4.5}{3.3, 4.5}{ }{-S}
\drawgray{1.7, 4.4}{3.3, 1.5}
\drawgray{1.7, 4.3}{3.3, 0.1}
\drawgray{1.7, 2.9}{3.3,0}
\drawblackw{1.7, 3}{3.3, 3}{ }{-S}
\drawblackw{1.7, 3.1}{3.3, 4.4}{sloped, near end}{S}
\drawblackw{1.7, 4.4}{3.3, 3.1}{sloped, near end}{S}
\drawblack{4.7, 4.5}{6.3,4.5}{ }{d}
\drawblack{4.7, 4.4}{6.3, 3.1}{sloped, near end}{S}
\drawblackw{4.7, 3.1}{6.3, 4.4}{sloped, near start}{D}
\drawblack{4.7,3}{6.3, 1.6}{sloped, near start}{\bullet\hspace{3pt}-h}
\drawblack{4.7,2.9}{6.3, 0.1}{sloped, near end}{\id}
\drawblack{4.7, 0}{6.3, 0}{ }{-\id}
\drawblackw{4.7, 0.2}{6.3, 2.8}{sloped, near start}{S}
\drawblackw{4.7, 0.1}{6.3, 1.4}{sloped}{-/\hspace{-7pt}\bullet+h}
\drawblackw{4.7, 1.5}{6.3, 1.5}{near start}{S}
\drawblackw{4.7, 1.6}{6.3, 3}{sloped, near end}{-\id}
\smcup{4.925}{2.8}{0.4}{thick}
\drawblack{7.7,4.5}{9.3,3.1}{sloped}{-S}
\drawgray{7.7, 3}{9.3, 3}
\drawblack{7.7, 1.5}{9.3, 2.9}{sloped}{S}
\drawgray{7.7, 0}{9.3, 2.8}
\end{tikzpicture}
}
\]
Gaussian elimination on $-\id$ between the bottom objects $q^{6+12k}\delta$ leads to the complex
\[
\scalebox{1}[0.95]{
\begin{tikzpicture}
\node at (0,3.75) {$\cdots$};
\node at (1, 4.5) {$q^{5+12k}\alpha$};
\node at (4, 4.5) {$q^{6+12k}\gamma$};
\node at (7, 4.5) {$q^{8+12k}\gamma$};
\node at (1,3) {$q^{5+12k}\beta$};
\node at (4, 3) {$q^{6+12k}\delta$};
\node at (7, 3) {$q^{7+12k}\alpha$};
\node at (10, 4.5) {$q^{9+12k}\alpha$};
\node at (4, 1.5) {$q^{7+12k}\alpha$};
\node at (7, 1.5) {$q^{8+12k}\delta$};
\drawblack{1.7, 4.5}{3.3, 4.5}{ }{-S}
\drawgray{1.7, 4.4}{3.3, 1.5}
\drawblackw{1.7, 3}{3.3, 3}{ }{-S}
\drawblackw{1.7, 3.1}{3.3, 4.4}{sloped, near end}{S}
\drawblackw{1.7, 4.4}{3.3, 3.1}{sloped, near end}{S}
\drawblack{4.7, 4.5}{6.3,4.5}{ }{d}
\drawblack{4.7, 4.4}{6.3, 3.1}{sloped, near end}{S}
\drawblackw{4.7, 3.1}{6.3, 4.4}{sloped, near start}{D}
\drawblack{4.7, 3}{6.3, 3}{ }{S}
\drawblack{4.7, 1.5}{6.3, 1.5}{ }{S}
\drawblack{4.7, 1.6}{6.3, 2.9}{sloped, near start}{-\id}
\drawblackw{4.7, 2.9}{6.3, 1.6}{sloped, near end}{\bullet\hspace{3pt}-/\hspace{-5pt}\bullet}
\smcup{5.7375}{2.1}{0.4}{thick}
\drawblack{7.7, 4.5}{9.3, 4.5}{ }{-S}
\drawblack{7.7, 1.5}{9.3, 4.3}{sloped}{S}
\drawgray{7.7, 3}{9.3, 4.4}
\end{tikzpicture}
}
\]
which after Gaussian elimination of the two objects $q^{7+12k}\alpha$ turns into
\[
\begin{tikzpicture}
\node at (3,3.75) {$\cdots$};
\node at (4, 4.5) {$q^{6+12k}\gamma$};
\node at (7, 4.5) {$q^{8+12k}\gamma$};
\node at (4, 3) {$q^{6+12k}\delta$};
\node at (7, 3) {$q^{8+12k}\delta$};
\node at (10, 4.5) {$q^{9+12k}\alpha$};
\drawblack{4.7, 4.5}{6.3,4.5}{ }{d}
\drawblack{4.7, 3.1}{6.3, 4.4}{sloped, near start}{D}
\drawblackw{4.7, 4.4}{6.3, 3.1}{sloped, near end}{D}
\drawblack{4.7, 3}{6.3, 3}{ }{d}
\drawblack{7.7, 4.5}{9.3, 4.5}{ }{-S}
\drawblack{7.7, 3}{9.3, 4.4}{sloped}{S}
\end{tikzpicture}
\]
Tensoring this complex with $\CBN(T_b;\Z[h])$ and Gaussian eliminations in lower homological degrees gives the complex
\[
\begin{tikzpicture}
\node at (0,3.75) {$\cdots$};
\node at (1, 4.5) {$q^{10+12k}\gamma$};
\node at (4, 4.5) {$q^{11+12k}\alpha$};
\node at (1, 3) {$q^{10+12k}\delta$};
\node at (4, 3) {$q^{12+12k}\gamma$};
\node at (7, 3) {$q^{12+12k}\gamma$};
\node at (1, 1.5) {$q^{10+12k}\gamma$};
\node at (4, 1.5) {$q^{10+12k}\gamma$};
\node at (4, 0) {$q^{11+12k}\beta$};
\drawblack{1.8, 4.5}{3.2, 4.5}{ }{-S}
\drawgray{1.8, 1.6}{3.2, 2.9}
\drawgray{1.8, 4.4}{3.2, 3.1}
\drawblackw{1.8, 4.3}{3.2, 1.6}{sloped}{-\id}
\drawblackw{1.8, 3.1}{3.2, 4.4}{sloped, near end}{S}
\drawblackw{1.8, 2.9}{3.2, 0.1}{sloped, near end}{-S}
\drawblackw{1.8, 1.5}{3.2, 1.5}{near end}{\id}
\drawblack{1.8, 1.4}{3.2, 0}{sloped}{S}
\drawblack{4.8, 3}{6.2, 3}{ }{-\id}
\drawgray{4.8, 4.5}{6.2, 3.1}
\drawgray{4.8, 1.5}{6.2, 2.9}
\drawgray{4.8, 0}{6.2, 2.8}
\end{tikzpicture}
\]
Two Gaussian eliminations later this complex is exactly $q^{6k+6}\mathcal{B}_{3k+3}$.
\end{proof}

\begin{proof}[Proof of Theorem \ref{thm:torusbraid}]
Part (1) is done by induction on $m$, with the case $m = 0$ trivial. The induction step uses Lemma \ref{lm:3kto3k+1}, \ref{lm:3k+1to3k+2}, and \ref{lm:3k+2to3k+3}, depending on whether $m = 3k$, $3k+1$, or $3k+2$.

Part (2) follows from part (1) together with Lemma \ref{lm:3k+1to3k+2}(2).
\end{proof}

\section{The Bar-Natan complex for torus links on three strands}\label{sec:toruslinks}

We have the functor $C_L\colon \Cob(D^3_3)\to \Cob(D^2_2)$ obtained by linking up the two points at the left outside of the rectangle. Our first goal is to describe the cochain complex $C_L(\mathcal{B})$ over $\Cob(D^2_2)$. After delooping there are two possible smoothings in $\Cob(D^2_2)$, namely
\[
\begin{tikzpicture}
\node at (0,0) {$\tomega = $};
\node at (2.5,0) {and};
\node at (4,0) {$\talpha = $};
\tmomega{0.5}{-0.3}{0.6}{very thick}
\tmalpha{4.5}{-0.3}{0.6}{very thick}
\end{tikzpicture}
\]
It is easy to check that
\begin{align*}
C_L(\omega) & = q \tomega \oplus q^{-1}\tomega & C_L(\alpha) &= \tomega\\
C_L(\beta) & = q\talpha\oplus q^{-1}\talpha & C_L(\gamma) &= \talpha = C_L(\delta)
\end{align*}

From this we get that $C_L(\mathcal{B})$ starts with
\[
\begin{tikzpicture}
\node at (0,3) {$q\tomega$};
\node at (2,3) {$q\tomega$};
\node at (4,3) {$q^2\talpha$};
\node at (6,3) {$q^4\talpha$};
\node at (8,3) {$q^5\tomega$};
\node at (10,3) {$q^7\tomega$};
\node at (0, 1.5) {$q^{-1}\tomega$};
\node at (2, 1.5) {$q^2\talpha$};
\node at (4, 1.5) {$q^2\talpha$};
\node at (6, 1.5) {$q^4\talpha$};
\node at (8, 1.5) {$q^6\talpha$};
\node at (10, 1.5) {$q^8\talpha$};
\node at (2, 0) {$\talpha$};
\node at (8, 0) {$q^4\talpha$};
\node at (10, 0) {$q^6\talpha$};
\node at (11, 1.5) {$\cdots$};
\drawgray{0.5, 1.6}{1.6, 2.9}
\drawgrayw{0.3, 2.9}{1.6, 1.5}
\drawblack{0.5, 1.4}{1.8, 0} {sloped}{S}
\drawblack{0.3, 3}{1.6, 3}{ }{\id}
\drawgray{2.4, 3}{3.6, 3}
\drawgray{2.4, 2.9}{3.6, 1.6}
\drawblackw{2.3, 0.1}{3.6, 2.8}{sloped, near start}{\bullet}
\drawblack{2.3, 0}{3.6, 1.4}{sloped, below}{-\hspace{3pt}\bullet}
\drawblackw{2.4, 1.5}{3.6, 1.5}{near end}{-\id}
\drawblackw{2.4, 1.6}{3.6, 2.9}{sloped, near start}{\id}
\smcap{2.4}{0.725}{0.35}{thick}
\smcup{3.075}{0.56}{0.35}{thick}
\drawgray{4.4, 1.5}{5.6, 1.5}
\drawgray{4.4, 1.6}{5.6, 2.9}
\drawgrayw{4.4, 2.9}{5.6, 1.6}
\drawblack{4.4,3}{5.6,3}{ }{d}
\drawblack{6.4,3}{7.6,3}{ }{-S}
\drawblack{6.4,1.6}{7.6,2.9}{sloped, near end}{S}
\drawblack{6.4,1.5}{7.6,1.5}{ }{-\bullet+h}
\drawblack{6.4,1.4}{7.6,0}{sloped, near start}{-\id}
\drawblackw{6.4,2.9}{7.6,1.6}{sloped, near end}{\bullet \,\,- h}
\drawblackw{6.4, 2.7}{7.6, 0.2}{sloped, near end}{\id}
\node[scale = 0.7] at (6.9375,1.65) {$\bullet$};
\smcup{6.85}{1.525}{0.35}{thick}
\smcap{7.155}{2.125}{0.35}{thick}
\drawgray{8.4, 0}{9.6,0}
\drawgray{8.4, 0.1}{9.6, 2.8}
\drawgrayw{8.4, 1.5}{9.6, 1.5}
\drawgrayw{8.4, 2.9}{9.6, 1.6}
\drawblackw{8.4, 1.6}{9.6, 2.9}{sloped, near end}{S}
\drawblackw{8.4, 2.8}{9.6,0.2}{sloped, near end}{S}
\drawblack{8.4,3}{9.6,3}{ }{2\bullet\hspace{5pt}-h}
\tmomega{8.81}{3.065}{0.2}{thick}
\end{tikzpicture}
\]
where $d$ is as before, and continues as
\[
\begin{tikzpicture}
\node at (0,1.5) {$\cdots$};
\node at (1, 3) {$q^{1+6m}\tomega$};
\node at (3.4, 3) {$q^{2+6m}\talpha$};
\node at (5.8, 3) {$q^{4+6m}\talpha$};
\node at (8.2, 3) {$q^{5+6m}\tomega$};
\node at (10.6, 3) {$q^{7+6m}\tomega$};
\node at (11.6, 1.5) {$\cdots$};
\node at (1, 1.5) {$q^{2+6m}\talpha$};
\node at (3.4, 1.5) {$q^{2+6m}\talpha$};
\node at (5.8, 1.5) {$q^{4+6m}\talpha$};
\node at (8.2, 1.5) {$q^{6+6m}\talpha$};
\node at (10.6, 1.5) {$q^{8+6m}\talpha$};
\node at (1, 0) {$q^{6m}\talpha$};
\node at (8.2, 0) {$q^{4+6m}\talpha$};
\node at (10.6, 0) {$q^{6+6m}\talpha$};
\drawblack{1.7,3}{2.7,3}{ }{-S}
\drawblack{1.6,0}{2.7,1.4}{sloped, below}{-\,\,\bullet}
\drawblack{1.6, 0.2}{2.7,2.8}{sloped, near start}{\bullet}
\drawblackw{1.7,1.5}{2.7, 1.5}{near start}{-\id}
\drawblack{1.7, 1.6}{2.7, 2.9}{sloped, near start}{\id} 
\drawblackw{1.7, 2.9}{2.7,1.6}{sloped, near start}{S}
\smcup{2.275}{0.575}{0.35}{thick}
\smcap{1.655}{0.8}{0.35}{thick}
\drawblack{4.1, 3}{5.1,3}{ }{d}
\drawgray{4.1, 1.5}{5.1, 1.5}
\drawgray{4.1, 1.6}{5.1, 2.9}
\drawgrayw{4.1, 2.9}{5.1, 1.6}
\drawblack{6.5,3}{7.5,3}{ }{-S}
\drawblack{6.5,1.6}{7.5,2.9}{sloped, near end}{S}
\drawblack{6.5,1.5}{7.5,1.5}{ }{-\bullet+h}
\drawblack{6.5,1.4}{7.5,0}{sloped, near start}{-\id}
\drawblackw{6.5,2.9}{7.5,1.6}{sloped, near end}{\bullet \,\,- h}
\drawblackw{6.5, 2.7}{7.5, 0.2}{sloped, near end}{\id}
\node[scale = 0.7] at (6.9375,1.65) {$\bullet$};
\smcup{6.85}{1.525}{0.35}{thick}
\smcap{7.135}{2.125}{0.35}{thick}
\drawgray{8.9, 0}{9.9,0}
\drawgray{8.9, 0.1}{9.9, 2.8}
\drawgrayw{8.9, 1.5}{9.9, 1.5}
\drawgrayw{8.9, 2.9}{9.9, 1.6}
\drawblackw{8.9, 1.6}{9.9, 2.9}{sloped, near end}{S}
\drawblackw{8.9, 2.8}{9.9,0.2}{sloped, near end}{S}
\drawblack{8.9,3}{9.9,3}{ }{2\bullet\hspace{5pt}-h}
\tmomega{9.215}{3.065}{0.2}{thick}
\end{tikzpicture}
\]
Again, we do not work out morphisms that will not survive the Gaussian eliminations. Using Gaussian elimination, this complex is chain homotopy equivalent to the following complex that we call $\tilde{\mathcal{B}}$.
\[
\begin{tikzpicture}
\node at (0.1, 1.5) {$\cdots$};
\node at (1,1.5) {$q^{2+6m}\talpha$};
\node at (3.4, 1.5) {$q^{4+6m}\talpha$};
\node at (5.8, 1.5) {$q^{6+6m}\talpha$};
\node at (8.2, 1.5) {$q^{6+6m}\talpha$};
\node at (10.6, 1.5) {$q^{8+6m}\talpha$};
\node at (11.5, 1.5) {$\cdots$};
\node at (5.8, 0.5) {$q^{5+6m}\tomega$};
\node at (8.2, 0.5) {$q^{7+6m}\tomega$};
\node at (2.2, 2.5) {$q^{-1}\tomega$};
\node at (4.6, 2.5) {$\talpha$};
\node at (7, 2.5) {$q^2\talpha$};
\node at (9.4, 2.5) {$q^4\talpha$};
\node at (10.4, 2.5) {$\cdots$};
\drawblack{2.8, 2.5}{4.3, 2.5}{ }{S}
\drawblack{4.9, 2.5}{6.6, 2.5}{ }{c}
\drawblack{7.4, 2.5}{9, 2.5}{ }{d}
\drawblack{1.7, 1.5}{2.7, 1.5}{ }{d}
\drawblack{4.1, 1.5}{5.1, 1.5}{ }{c}
\drawblack{6.5, 0.6}{7.5, 1.5}{sloped, near end}{S}
\drawblack{6.5, 0.5}{7.5, 0.5}{ }{e}
\drawblackw{6.5, 1.5}{7.5, 0.6}{sloped, near end}{S}
\drawblack{8.9, 1.5}{9.9, 1.5}{ }{c}
\end{tikzpicture}
\]
Notice that after Gaussian elimination the morphism between $q^{4+6m}\talpha$ and $q^{5+6m}\tomega$ is $S-S = 0$, and similarly for the morphism between $q^{7+6m}\tomega$ and $q^{8+6m}\talpha$.

For $k\geq 1$ let $\tilde{\mathcal{B}}_{3k}$ be the quotient complex of $\tcal{B}$ by the subcomplex consisting of all objects of homological degree greater than $4k$. That is, it ends in
\[
\begin{tikzpicture}
\node at (-0.3,0) {$\cdots$};
\node at (1,0) {$q^{4+6(k-1)}\talpha$};
\node at (4,0) {$q^{6k}\talpha$};
\node at (4, -0.75) {$q^{6k-1}\tomega$};
\drawblack{2, 0}{3.4, 0}{ }{c}
\end{tikzpicture}
\]
In the special case $k=0$ we let $\tcal{B}_0$ be $q\tomega\oplus q^{-1}\tomega$ in homological degree $0$. We now assume $k\geq 0$.

Let $\tcal{B}_{3k+1}$ be the quotient complex of $\tcal{B}$ by the subcomplex consisting of all objects of homological degree greater than $4k+1$. For $k\geq 1$ it ends in
\[
\begin{tikzpicture}
\node at (4.8, 1.5) {$\cdots$};
\node at (5.8, 1.5) {$q^{6k}\talpha$};
\node at (8.6, 1.5) {$q^{6k}\talpha$};
\node at (5.8, 0.5) {$q^{6k-1}\tomega$};
\node at (8.6, 0.5) {$q^{6k+1}\tomega$};
\drawblack{6.5, 0.6}{8.1, 1.5}{sloped, near end}{S}
\drawblack{6.5, 0.5}{8, 0.5}{ }{e}
\drawblackw{6.3, 1.5}{8, 0.6}{sloped, near end}{S}
\end{tikzpicture}
\]
Let $\tcal{B}_{3k+1}^a$ be the quotient complex of $\tcal{B}$ by the subcomplex consisting of all objects of homological degree greater than $4k+2$. For $k\geq 1$ it ends in
\[
\begin{tikzpicture}
\node at (11, 1.5) {$q^{6k+2}\talpha$};
\node at (7.2, 1) {$\cdots$};
\node at (8.2, 1.5) {$q^{6k}\talpha$};
\node at (8.2, 0.5) {$q^{6k+1}\tomega$};
\drawblack{8.7, 1.5}{10.3, 1.5}{ }{c}
\end{tikzpicture}
\]
Finally, let $\tcal{B}_{3k+2}$ be the quotient complex of $\tcal{B}$ by the subcomplex consisting of all objects of homological degree greater than $4k+3$. It ends in
\[
\begin{tikzpicture}
\node at (0.1, 1.5) {$\cdots$};
\node at (1,1.5) {$q^{6k+2}\talpha$};
\node at (3.9, 1.5) {$q^{6k+4}\talpha$};
\drawblack{1.7, 1.5}{3.3, 1.5}{ }{d}
\end{tikzpicture}
\]
\begin{lemma}\label{lm:torus2ends}
Let $m\geq 0$. Then $C_L(\mathcal{B}_m)\simeq \tcal{B}_m$, and $C_L(\mathcal{B}^a_{3m+1})\simeq \tcal{B}^a_{3m+1}$.
\end{lemma}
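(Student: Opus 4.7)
The plan is to apply the additive functor $C_L$ directly to each quotient complex and then run the same Gaussian-elimination argument that converted $C_L(\mathcal{B})$ into $\tcal{B}$, now restricted to the appropriate homological range. The case $m=0$ is immediate, since $C_L(\omega) = q\tomega\oplus q^{-1}\tomega = \tcal{B}_0$, so I would focus on $m\geq 1$.

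Because $C_L$ is an additive functor it preserves subcomplexes and quotients, so $C_L(\mathcal{B}_m)$ is obtained from $C_L(\mathcal{B})$ by collapsing the image of the same generating set used to define $\mathcal{B}_m$ (together with the extra $C_L(\delta)=\talpha$-summand in the $3k+1$ and $3k+2$ cases). The Gaussian eliminations that produce $\tcal{B}$ from $C_L(\mathcal{B})$ are all local: each cancels a pair of adjacent objects linked by an identity, and the cancellation pattern is periodic with period four. Every elimination whose source and target both lie strictly below the truncation cutoff can be performed verbatim, and this produces the entire interior of $\tcal{B}_m$.

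The only real work is at the top, where some of the identity morphisms used for cancellation may have lost one of their endpoints due to truncation. I would handle the four cases in turn. For $m=3k$, the usual cancellations go through and leave exactly one surviving object $u^{4k}q^{6k}\talpha$ on top connected by a morphism $c$ to $u^{4k-1}q^{4+6(k-1)}\talpha$, matching $\tcal{B}_{3k}$. For $m = 3k+1$ and $m = 3k+2$, removal of the $\delta$-summand kills a $\talpha$ at the top that was meant to cancel with an $\tomega$ just below, so the $\tomega$ survives together with the $e$ self-morphism and the saddle $S$ indicated in the definitions of $\tcal{B}_{3k+1}$ and $\tcal{B}_{3k+2}$. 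For $\mathcal{B}^a_{3k+1}$, keeping the $\delta$-summand allows the additional top cancellation to occur, producing the simpler top of $\tcal{B}^a_{3k+1}$ with the $c$ morphism into the final $q^{6k+2}\talpha$.

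The main obstacle is simply the bookkeeping at the top: one must verify that the surviving morphisms are precisely the $c$, $d$, $e$ and $S$ appearing in the definitions. This reduces to tracking birth/dot/death compositions through the deloopings, exactly the kind of computation already carried out above in the transitions among the right-most columns of Figures \ref{fig:contract_eps}, \ref{fig:contract_zet}, and \ref{fig:contract_epsdet}. Because the complexes are periodic and the top range is short, this is a finite and routine check in each of the four cases.
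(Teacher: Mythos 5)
Your high-level strategy is the same as the paper's one-line proof: apply the additive functor $C_L$, run the Gaussian eliminations that turn $C_L(\mathcal{B})$ into $\tcal{B}$, and check where the cancellations stop for each truncation. That framework is sound, and your observation that every cancellation whose pair lies entirely below the cutoff can be performed verbatim in the quotient is correct. However, the content of the lemma is precisely the bookkeeping at the top degree, and your description of what survives there is wrong in most cases, so the proof as written does not actually establish the statement.

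Concretely, the cancellations in $C_L(\mathcal{B})$ always pair two copies of $\talpha$ (one in a degree of the form $4m{+}1$ with one in degree $4m{+}2$, and one in degree $4m{+}3$ with one in degree $4m{+}4$); an $\tomega$ is never a member of a cancelling pair, so the mechanism you describe --- ``a $\talpha$ at the top that was meant to cancel with an $\tomega$ just below'' --- is not what happens. More seriously, two of your stated conclusions contradict the definitions in the paper. For $m=3k$ you claim ``exactly one surviving object $u^{4k}q^{6k}\talpha$'', but $\tcal{B}_{3k}$ has two objects in homological degree $4k$, namely $u^{4k}q^{6k}\talpha$ and $u^{4k}q^{6k-1}\tomega$. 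For $m=3k+2$ you claim the $\tomega$ survives with the $e$ and $S$ morphisms, but $\tcal{B}_{3k+2}$ ends in $q^{6k+2}\talpha\stackrel{d}{\longrightarrow}q^{6k+4}\talpha$ with no $\tomega$, $e$, or $S$ at all. The reason the $3k+2$ case looks different is that its cutoff degree $4k+3$ is a \emph{source} degree of a cancelling pair while $4k$ and $4k+2$ are \emph{target} degrees; your argument needs to notice this parity distinction and does not. Since these checks are all that the lemma asks for, getting them wrong and then deferring the rest as ``a finite and routine check'' leaves a genuine gap.
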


\begin{proof}
This follows from the Gaussian eliminations in $C_L(\mathcal{B})$ after carefully checking where the cancellations stop for the various cases.
\end{proof}

Now let $G\colon \Cob(D^2_2)\to \mathfrak{Mod}^q_{\Z[X,h]/(X^2-Xh)}$ be the functor obtained by linking up the two points on the right outside of the rectangle followed by identification with the graded-modules category. After delooping there is only one possible smoothing, an arc, which gets identified with $A$. The morphism $G(c) = 0$, so the complex $G(\tcal{B})$ simplifies considerably. Furthermore,
\[
G(d) = G(e) = 2X-h.
\]

\begin{proposition}\label{prp:torusonestrand}
The cochain complex $G(\tcal{B})$ is chain homotopy equivalent to
\[
q^{-2}A \oplus \bigoplus_{m=0}^\infty u^{4m+2}q^{6m+2} A(1)\oplus \bigoplus_{m=1}^\infty u^{4m}q^{6m-2}A(2).
\]
\end{proposition}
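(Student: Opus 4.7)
\emph{Proof proposal.} The plan is to apply the functor $G$ termwise to $\tcal{B}$, deloop the closed circle that appears whenever $G$ closes off a $\tomega$, and then reduce the resulting complex by iterated Gaussian elimination in the style of Example~\ref{ex:contract} and the proof of Lemma~\ref{lm:3kto3k+1}.

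First I compute $G$ on generators and morphisms. Since $G$ closes off the two right endpoints of $D^2_2$ by an external arc, the cup-cap pattern of $\talpha$ becomes a single arc, so $G(\talpha)\cong A$; the two vertical strands of $\tomega$ become an arc plus a closed circle, so Lemma~\ref{lm:deloop} gives $G(\tomega)\cong qA\oplus q^{-1}A$. On morphisms, $G(c)=0$ and $G(d)=G(e)=2X-h$ are recorded in the excerpt, while a direct computation (of the same flavour as in Lemma~\ref{lm:3kto3k+1}) identifies the saddles with Frobenius multiplication and comultiplication on the circle factor, giving the row $(\id,\,X)$ in the direction $\tomega\to\talpha$ and the column $(X-h,\,\id)^T$ in the direction $\talpha\to\tomega$ in the delooped coordinates.

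Second, I lay out $G(\tcal{B})$ with these identifications and perform Gaussian eliminations via Lemma~\ref{lm:gausselim}. Because $G(c)=0$, many morphisms vanish and the complex decouples into short blocks connected only by the $d$'s, the $e$'s, and the $\id$-components of the delooped saddles. Organising the cancellations block-by-block, as in the parallel-cancellation argument following Figure~\ref{fig:contract_eps}, eliminates all the $\id$-morphisms. The key algebraic input is
\[
(2X-h)^2 \;=\; 4X^2-4Xh+h^2 \;=\; h^2,
\]
which holds because $X^2=Xh$ in $\Z[X,h]/(X^2-Xh)$. Consequently, whenever an intermediate summand is cancelled between two morphisms equal to $2X-h$, the surviving composite becomes $h^2$, which is precisely the differential of $A(2)$.

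Finally I identify the surviving pieces. The initial $q^{-1}\tomega$ deloops to $A\oplus q^{-2}A$; the first summand cancels via $G(S\colon q^{-1}\tomega\to\talpha)$ against the $\talpha$ at homological degree $1$, and the resulting correction from $q^{-2}A$ to the next object is $X\cdot G(c)=0$, so $q^{-2}A$ survives in isolation. Each $d$-morphism in the main row between $q^{2+6m}\talpha$ and $q^{4+6m}\talpha$ contributes a copy of $A(1)$ shifted by $u^{4m+2}q^{6m+2}$. Each $\tomega$-ladder $q^{5+6m}\tomega\xrightarrow{e}q^{7+6m}\tomega$, after delooping and cancelling the two $\id$-components of its flanking saddles via the formula $\varepsilon-\gamma\varphi^{-1}\delta$, leaves the pair $q^{4+6m}A\to q^{8+6m}A$ connected by $\pm h^2$, producing a copy of $A(2)$ shifted by $u^{4(m+1)}q^{6(m+1)-2}$, which after reindexing is the $m\geq 1$ summand of the claimed decomposition. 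The principal obstacle is tracking the propagation of correction terms through the many interlocking cancellations, particularly where the top-row $\talpha$'s interact with the main-row $\talpha$'s; I would fix a specific order (first the identities strictly within each $\tomega$-block, then those linking the blocks to adjacent $\talpha$'s) and monitor the $q$-grading at each step to rule out unintended off-diagonal morphisms, after which a direct count of surviving generators against the claimed bidegrees completes the argument.
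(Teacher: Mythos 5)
Your proposal follows essentially the same route as the paper: apply $G$, note that $G(c)=0$ splits the complex into blocks of at most two consecutive homological degrees, deloop the circles coming from $\tomega$, compute the delooped saddles, and Gaussian-eliminate the identity components to isolate $q^{-2}A$, the $A(1)$ pieces, and the $A(2)$ pieces. The one imprecision is your heuristic that ``an intermediate summand is cancelled between two morphisms equal to $2X-h$''; what actually happens in the $\tomega$-block is that two successive Gaussian eliminations of identities produce two zig-zag corrections whose sum factors as $-(2X-h)^2=-h^2$ (the paper computes this directly as $Xh-h^2-X(2X-h)=-h^2$), which is the same conclusion reached by a slightly different bookkeeping.
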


\begin{proof}
Since $G(c)=0$ the complex $G(\tcal{B})$ falls into small blocks of at most two consecutive homological degrees. In homological degrees $0$ and $1$ we have
\[
\begin{tikzpicture}
\node at (-0.3,1.3) {$A$};
\node at (2,0.8) {$A$};
\node at (-0.3,0.3) {$q^{-2}A$};
\drawblack{0.3,1.3}{1.7,0.9}{sloped}{\id}
\drawblack{0.3,0.3}{1.7,0.7}{sloped}{X}
\end{tikzpicture}
\]
After the Gaussian elimination this is $q^{-2}A$. Even more directly, we have
\[
G\left( u^{4m+2}q^{6m+2}\talpha \stackrel{d}{\longrightarrow}u^{4m+3}q^{6m+4}\talpha\right) = u^{4m+2}q^{6m+2} A(1).
\]
Therefore it remains to show that
\[
\begin{tikzpicture}
\node at (5.4, 1.5) {$u^{4m}q^{6m}\talpha$};
\node at (9.2, 1.5) {$u^{4m+1}q^{6m}\talpha$};
\node at (5.4, 0.5) {$u^{4m}q^{6m-1}\tomega$};
\node at (9.2, 0.5) {$u^{4m+1}q^{6m+1}\tomega$};
\drawblack{6.5, 0.6}{8.1, 1.5}{sloped, near end}{S}
\drawblack{6.5, 0.5}{8, 0.5}{ }{e}
\drawblackw{6.3, 1.5}{8, 0.6}{sloped, near end}{S}
\end{tikzpicture}
\]
is mapped to $u^{4m}q^{6m}A(2)$ up to chain homotopy equivalence. Ignoring the homological degrees, applying $G$ gives
\[
\begin{tikzpicture}
\node at (0,3) {$q^{6m}A$};
\node at (0,1.5) {$q^{6m}A$};
\node at (-0.1,0) {$q^{6m-2}A$};
\node at (4.2, 3) {$q^{6m}A$};
\node at (4.1, 1.5) {$q^{6m+2}A$};
\node at (4.2,0) {$q^{6m}A$};
\drawblack{0.8,0}{3.5,0}{ }{2X - h}
\drawblack{0.8, 0.2}{3.5, 2.8}{sloped, near start}{X}
\drawblackw{0.7,1.5}{3.4,1.5}{near start}{2X - h}
\drawblack{0.7, 1.7}{3.5, 3}{sloped, near end}{\id}
\drawblackw{0.7, 2.8}{3.5, 0.2}{sloped, near end}{\id}
\drawblackw{0.7, 3}{3.4, 1.7}{sloped, near start}{X - h}
\end{tikzpicture}
\]
By Gaussian elimination of the identity from the top left to the bottom right, and using that $X (X - h) = 0$, we get this to be chain homotopy equivalent to
\[
\begin{tikzpicture}
\node at (0,2) {$q^{6m}A$};
\node at (-0.1,0) {$q^{6m-2}A$};
\node at (4,2) {$q^{6m}A$};
\node at (4.1,0) {$q^{6m+2}A$};
\drawblack{0.8, 0}{3.3, 0}{ }{Xh - \,h^2}
\drawblack{0.8, 0.2}{3.4, 1.8}{sloped, near start}{X}
\drawblack{0.7, 2}{3.4, 2}{ }{\id}
\drawblackw{0.7, 1.8}{3.3, 0.2}{sloped, near end}{2X -\,h}
\end{tikzpicture}
\]
After one more Gaussian elimination the morphism between the two surviving objects is
\[
Xh - h^2-X(2X-h) = Xh-h^2-Xh = -h^2.
\]
After a change of basis to fix the minus sign, we get the desired result.
\end{proof}

It is now straightforward to work out the various chain homotopy types of the complexes $G(\tcal{B}_m)$. In terms of notation, we write $T(3,3k+1.5)$ for the $2$-component link corresponding to the braid word $(ab)^{3k+1}a$. It is easy to see that one of the components is an unknot, and the other component is the torus knot $T(2,2k+1)$.


\begin{theorem}\label{thm:khovhomtorus}
The following hold as chain homotopy equivalences of $q$-graded $\Z[X,h]/(X^2-Xh)$-cochain complexes.
\begin{enumerate}
\item For $k\geq 1$
\begin{align*}
\CBN(T(3,3k;\Z[h])\simeq &\,\, q^{6k-2}A\oplus  u^{4k}q^{12k}A\oplus u^{4k}q^{12k}A\oplus u^{4k}q^{12k-2}A \,\oplus \\
& \bigoplus_{m=0}^{k-1}u^{4m+2}q^{6(k+m)+2}A(1) \oplus
\bigoplus_{m=1}^{k-1}u^{4m}q^{6(k+m)-2}A(2).
\end{align*}
\item For $k\geq 0$
\begin{align*}
\CBN(T(3,3k+1);\Z[h]) \simeq & \,\,q^{6k} A \oplus \bigoplus_{m=0}^{k-1} u^{4m+2}q^{6(k+m)+4}A(1)\,\oplus\\
& \bigoplus_{m=1}^k u^{4m}q^{6(k+m)}A(2).
\end{align*} 
\item For $k\geq 0$
\begin{align*}
\CBN(T(3,3k+1.5);\Z[h]) \simeq & \,\, q^{6k+1} A \oplus \bigoplus_{m=0}^{k-1} u^{4m+2}q^{6(k+m)+5}A(1)\,\oplus\\
&  u^{4k+2}q^{12k+5}A \oplus \bigoplus_{m=1}^k u^{4m}q^{6(k+m)+1}A(2).
\end{align*}
\item For $k\geq 0$
\begin{align*}
\CBN(T(3,3k+2);\Z[h]) \simeq & \,\,q^{6k+2} A \oplus \bigoplus_{m=0}^{k} u^{4m+2}q^{6(k+m+1)}A(1)\,\oplus\\
& \bigoplus_{m=1}^k u^{4m}q^{6(k+m)+2}A(2).
\end{align*}
\end{enumerate}
\end{theorem}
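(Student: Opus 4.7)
The plan is to combine Theorem \ref{thm:torusbraid} (which computes the Bar-Natan tangle complex), Lemma \ref{lm:torus2ends} (which handles the left closure), and Proposition \ref{prp:torusonestrand} (which decomposes the right closure of the full complex $\tcal{B}$) in a careful boundary analysis for each of the four cases. For each braid word $w$ among $(ab)^{3k}$, $(ab)^{3k+1}$, $(ab)^{3k+1}a$, $(ab)^{3k+2}$, Theorem \ref{thm:torusbraid} identifies $\CBN(T_w;\Z[h])$ up to chain homotopy with a suitably $q$-shifted copy of $\mathcal{B}_m$ or $\mathcal{B}_{3k+1}^a$. Applying $C_L$ yields a complex in $\Cob(D^2_2)$ which, by Lemma \ref{lm:torus2ends}, is chain homotopy equivalent to $\tcal{B}_m$ or $\tcal{B}_{3k+1}^a$. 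Applying $G$ then fully closes the tangle, producing the Bar-Natan complex of the corresponding closed link as a complex of $\Z[X,h]/(X^2-Xh)$-modules.

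Since $\tcal{B}_m$ and $\tcal{B}_{3k+1}^a$ are finite truncations of $\tcal{B}$, and the Gaussian eliminations in the proof of Proposition \ref{prp:torusonestrand} are local to pairs in adjacent homological degrees, each summand $A$, $A(1)$, or $A(2)$ of $G(\tcal{B})$ whose support lies entirely below the truncation cutoff survives intact as a summand of $G(\tcal{B}_m)$ (respectively $G(\tcal{B}_{3k+1}^a)$). The remaining task is therefore to analyse what happens at the top boundary in each of the four cases.

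For $T(3,3k+1)$ the cutoff is homological degree $\leq 4k+1$, which falls precisely on the natural boundary of the $A(2)$ block of index $m=k$, so the last summand is $u^{4k}q^{6k-2}A(2)$ and no extra terms appear. The case $T(3,3k+2)$ is analogous: the final pair $q^{6k+2}\talpha \to q^{6k+4}\talpha$ of $\tcal{B}_{3k+2}$ with morphism $d$ maps under $G$ to an $A(1)$ block of the correct shift. For $T(3,3k+1.5)$, the complex $\tcal{B}_{3k+1}^a$ carries an additional generator $q^{6k+2}\talpha$ in homological degree $4k+2$ connected to a generator at degree $4k+1$ only by the morphism $c$; since $G(c)=0$, this generator becomes isolated and contributes an extra summand $u^{4k+2}q^{6k+2}A$, which after the $q^{6k+3}$ shift matches $u^{4k+2}q^{12k+5}A$. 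For $T(3,3k)$, the truncation bisects the $A(2)$ block of index $m=k$, keeping only its degree-$4k$ half, and the generators $q^{6k}\talpha$ and $q^{6k-1}\tomega$ at degree $4k$ become, after delooping, three copies of $A$ in $q$-degrees $6k-2, 6k, 6k$, matching the three extra $A$'s in the statement after the $q^{6k}$ shift.

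The main obstacle is the boundary analysis for $T(3,3k)$: one must verify that after carrying out the Gaussian eliminations of Proposition \ref{prp:torusonestrand} at all lower homological degrees, the three surviving generators at degree $4k$ have no residual incoming morphisms from degree $4k-1$, so that they really are orphaned $A$-summands rather than being absorbed into a larger block. This is a local check using the explicit morphisms recorded in the proof of Proposition \ref{prp:torusonestrand} (in particular Figures \ref{fig:contract_eps} and \ref{fig:contract_zet}) together with the surgery morphisms defining the top of $\tcal{B}_{3k}$. The base cases ($k=1$ for part (1), $k=0$ for the others) require separate inspection but are straightforward. Once these boundary verifications are in place, applying the overall $q$-shifts from Theorem \ref{thm:torusbraid} yields the four stated decompositions.
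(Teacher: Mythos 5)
Your proposal is correct and follows essentially the same route as the paper: reduce to $\tcal{B}_m$ via Theorem~\ref{thm:torusbraid} and Lemma~\ref{lm:torus2ends}, apply $G$, and use the block decomposition of Proposition~\ref{prp:torusonestrand} together with the fact that $G(c)=0$ to analyse the top boundary. The paper compresses all of this into ``carefully checking how the various cochain complexes end,'' and your write-up supplies exactly that checking, including the key observation that the vanishing of $G(c)$ is what orphans the extra copies of $A$ at the truncation cutoff in cases (1) and (3).
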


\begin{proof}
This follows from Theorem \ref{thm:torusbraid} and Proposition \ref{prp:torusonestrand}, after carefully checking how the various cochain complexes end.
\end{proof}
\begin{remark}
Theorem \ref{thm:khovhomtorus} shows that \cite[Prop.4.2]{lewark2024gordian} holds also with integral coefficients and thus with any coefficients.
\end{remark}

The Khovanov homology can be obtained from this by using the change of ring homomorphism $\eta\colon \Z[h] \to \Z$ sending $h$ to $0$ and identifying $A= q\Z[h]\oplus q^{-1}\Z[h]$. Then 
\[
A(2)\otimes_{\Z[h]}\Z \cong q^{-1}\Z\oplus q\Z\oplus uq^3\Z\oplus uq^5\Z,
\]
while $A(1)\otimes_{\Z[h]}\Z$ has a multiplication by $2$ between two copies of $q\Z$, as well as a $q^{-1}\Z$ and a $uq^3\Z$.

For negative $k$ we can use that the Khovanov homology of a mirror link can be obtained by dualizing, \cite{MR2232858}. The missing case of $k=0$ for $T(3,3k)$, where the formula does not quite work, is left to the reader.


\section{The Bar-Natan complex for words in $\Omega_5^+$}
\label{sec:omega_five}

Let us now assume that $w\in \Omega_5^+$, that is, $w = (ab)^{3k}b^l$ with $k,l>0$. Then
\[
\CBN(T_w;\Z[h]) \cong \CBN(T_{(ab)^{3k}};\Z[h])\otimes \CBN(T_{b^l};\Z[h]),
\]
and by Theorem \ref{thm:torusbraid} and Proposition \ref{prp:toruslink2} we have
\[
\CBN(T_w;\Z[h]) \simeq q^{6k+l} \mathcal{B}_{3k}\otimes \mathcal{D}_{b^l},
\]
where $\mathcal{D}_{b^l}$ is the cochain complex
\[
\omega \stackrel{S}{\longrightarrow} q\beta \stackrel{d_1}{\longrightarrow}q^3\beta \stackrel{d_2}{\longrightarrow}\cdots \stackrel{d_{l-1}}{\longrightarrow} q^{2l-1}\beta.
\]
We can think of $\mathcal{B}_{3k}\otimes \mathcal{D}_{b^l}$ as a diagram of cochain complexes
\[
\mathcal{B}_{3k}\stackrel{\tilde{S}}{\longrightarrow}q\tbar{B}_{3k}\stackrel{\tilde{d}_1}{\longrightarrow} q^3\tbar{B}_{3k}\longrightarrow \cdots\longrightarrow q^{2l-1}\tbar{B}_{3k},
\]
where $\tbar{B}_{3k} = \mathcal{B}\otimes \beta$. By Lemma \ref{lm:threektensor} $\tbar{B}_{3k}\simeq q^{2l-1}\beta$, so this complex simplifies considerably.

\begin{proposition}\label{prp:formomega5}
Let $k,l\geq 1$, and $w=(ab)^{3k}b^l$. Then
\[
\CBN(T_w;\Z[h]) \simeq q^{6k+l} \mathcal{C}_{k,l},
\]
where $\mathcal{C}_{k,l}$ is the cochain complex over $\Cob(D^3_3)$ which agrees with $\mathcal{B}_{3k}$ in homological degrees less than $4k+1$, and which continues as
\[
\begin{tikzpicture}
\node at (-1,0.75) {$\cdots$};
\node at (0,1.5) {$q^{6k-1}\alpha$};
\node at (0,0) {$q^{6k-1}\beta$};
\node at (3,0.75) {$q^{6k+1}\beta$};
\node at (6,0.75) {$q^{6k+3}\beta$};
\node at (9,0.75) {$q^{6k+5}\beta$};
\node at (10,0.75) {$\cdots$};
\drawblack{0.7,1.5}{2.3,0.85}{sloped}{D}
\drawblack{0.7,0}{2.3, 0.65}{sloped}{d}
\drawblack{3.7,0.75}{5.3, 0.75}{ }{d_1}
\drawblack{6.7,0.75}{8.3, 0.75}{ }{d_2}
\end{tikzpicture}
\]
where $D$ is a double surgery and
\[
\begin{tikzpicture}
\node at (0,0) {$d_{2m-1} = $};
\node at (1.6,0) {$-$};
\tmalpha{0.8}{-0.2}{0.4}{thick}
\tmalpha{2}{-0.2}{0.4}{thick}
\node at (1,-0.075) {$\bullet$};
\node at (2.2, 0.075) {$\bullet$};
\node at (4.8,0) {$d = d_{2m} = $};
\node at (6.6, 0) {$+$};
\tmalpha{5.8}{-0.2}{0.4}{thick}
\tmalpha{7}{-0.2}{0.4}{thick}
\node at (6,-0.075) {$\bullet$};
\node at (7.2, 0.075) {$\bullet$};
\node at (7.9,0) {$- \,\,h.$};
\end{tikzpicture}
\]
\end{proposition}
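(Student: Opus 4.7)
The plan is to begin from the standard factorization
\[
\CBN(T_w;\Z[h]) \cong \CBN(T_{(ab)^{3k}};\Z[h]) \otimes \CBN(T_{b^l};\Z[h])
\]
and replace each factor up to chain homotopy. For the first factor I would apply Theorem \ref{thm:torusbraid}(1) to obtain $\CBN(T_{(ab)^{3k}};\Z[h])\simeq q^{6k}\mathcal{B}_{3k}$. For the second, the obvious $b^l$-analog of Proposition \ref{prp:toruslink2} (proved by the same argument, with the 3-braid smoothings $\omega,\beta$ replacing $\tomega,\talpha$) yields $\CBN(T_{b^l};\Z[h])\simeq q^l\mathcal{D}_{b^l}$, where the horizontal morphisms are precisely the alternating dot morphisms $d_{2m-1}$ and $d_{2m}$ described in the statement.

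Next I would view $\mathcal{B}_{3k}\otimes\mathcal{D}_{b^l}$ as the totalization of a bicomplex whose horizontal degree-$m$ column is $\mathcal{B}_{3k}$ for $m=0$ and $q^{2m-1}\mathcal{B}_{3k}\otimes\beta$ for $1\leq m\leq l$. Since $\beta = \zeta\otimes\zeta^\ast$, Lemma \ref{lm:threektensor} (applied to $\mathcal{B}_{3k}\otimes\zeta$ and then tensored with $\zeta^\ast$) gives $\mathcal{B}_{3k}\otimes\beta\simeq u^{4k}q^{6k}\beta$ via a finite sequence of Gauss eliminations that cancel pairs in strictly increasing homological degrees. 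Because none of the horizontal morphisms of $\mathcal{D}_{b^l}$ lower homological degree, these cancellations can be performed in parallel across all columns $m\geq 1$ without interference. After the eliminations, column $m\geq 1$ reduces to a single object $u^{4k}q^{6k+2m-1}\beta$ in homological degree $4k+m$, which matches the generators of $\mathcal{C}_{k,l}$ listed in the statement.

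The key step is to identify the surviving morphisms. The dot morphisms in $\mathcal{D}_{b^l}$ act on the right-hand pair of strands and therefore commute on the nose with the Gauss-elimination identifications performed on $\mathcal{B}_{3k}\otimes\zeta$; consequently the induced horizontal map between the surviving $\beta$'s in columns $m$ and $m+1$ is precisely the dot morphism $d_m$ in its stated alternating form. For the connecting morphism from column $0$ to column $1$, the saddle $S\colon\omega\to q\beta$ induces a map $\mathcal{B}_{3k}\to q\mathcal{B}_{3k}\otimes\beta$ which, after the column-$1$ cancellations, factors through the surviving $u^{4k}q^{6k+1}\beta$. Only the top generators $u^{4k}q^{6k-1}\alpha$ and $u^{4k}q^{6k-1}\beta$ of $\mathcal{B}_{3k}$ can contribute non-trivially to this surviving object, and their induced morphisms can be read off Figures \ref{fig:contract_epsdet} and \ref{fig:contract_zet} respectively: the $\alpha$-generator yields the double surgery $D$, and the $\beta$-generator yields the dot morphism $d$, matching the description of $\mathcal{C}_{k,l}$.

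The main obstacle is the bookkeeping in the last two steps: tracking signs through the parallel Gauss eliminations and confirming that no stray morphism appears between surviving objects in non-adjacent columns. Both reduce to direct inspection of Figures \ref{fig:contract_zet} and \ref{fig:contract_epsdet}, which explicitly record the relevant compositions; in particular, the "light" arrows that are discarded during the simplification of $\mathcal{B}_{3k}\otimes\zeta$ contribute nothing to the final morphisms because they map into objects that were cancelled in an earlier homological degree.
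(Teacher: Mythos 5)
Your proposal follows the same overall strategy as the paper: factorize $\CBN(T_w;\Z[h])$, view $\mathcal{B}_{3k}\otimes\mathcal{D}_{b^l}$ as a bicomplex whose columns are $\tbar{B}_{3k}$, reduce each column to a single $\beta$ via Lemma \ref{lm:threektensor}, and identify the surviving morphisms. The object-level identification is correct, as is your explanation of why nothing survives between non-adjacent columns in this case.

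However, there is a genuine gap in the identification of the horizontal morphisms $d_m$. You claim that the dot morphisms of $\mathcal{D}_{b^l}$ ``commute on the nose with the Gauss-elimination identifications performed on $\mathcal{B}_{3k}\otimes\zeta$.'' This is only true for the cup-dot and $h$-multiplication summands of $d_m$: these live in the $\zeta^\ast$ tensor factor, which is untouched when one simplifies $\mathcal{B}_{3k}\otimes\zeta$, so they genuinely commute with the Gaussian eliminations. But the cap-dot summand lives in the $\zeta$ factor, which is exactly where the eliminations act. The paper's proof explicitly notes that under the cap dot the surviving $q^{6k}\beta$ in column $m$ maps to a copy of $q^{6k}\beta$ in column $m+1$ that is \emph{about to be cancelled}, not to the surviving $q^{6k+2}\beta$; one therefore gets a zig-zag morphism through the cancelled object, and it is a separate (half-page) argument, using explicit delooping pictures and the neck-cutting relation, to show the zig-zag equals the cap dot again. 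Without that argument your proof is incomplete: the dot morphisms do not pass through the cancellations for free.

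A secondary, smaller point: your derivation of the connecting maps $D$ (from $\alpha$) and $d$ (from $\beta$) into $q^{6k+1}\beta$ ``read off Figures \ref{fig:contract_epsdet} and \ref{fig:contract_zet}'' also hides work. In the paper the $d$ from $q^{6k-1}\beta$ is not read off a figure directly; it arises only after cancelling a $-\id$, from a back-and-forth surgery (double surgery $\beta\to\gamma\to\beta$) that is then simplified by neck-cutting. You should at least indicate that a cancellation and a neck-cutting step are required here.
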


\begin{proof}
By the discussion above, after using the Gaussian eliminations of Lemma \ref{lm:threektensor}, we have the cochain complex $\mathcal{C}_{k,l}$, which agrees with $\mathcal{B}_{3k}$ in homological degrees less than $4k+1$, and the objects in higher degrees are as stated. It remains to show that the boundaries in homological degrees bigger than $4k$ are also as stated.

We begin with the cochain map $\tilde{S}\colon \mathcal{B}_{3k}\to\tbar{B}_{3k}$. After Gaussian eliminations (compare Figure \ref{fig:contract_zet}) in lower homological degrees, this ends in
\[
\begin{tikzpicture}
\node at (0,4) {$q^{6k-1}\alpha$};
\node at (0,3) {$q^{6k-1}\beta$};
\node at (0,2) {$q^{6k}\gamma$};
\node at (0,0) {$q^{6k-1}\beta$};
\node at (4, 2) {$q^{6k}\gamma$};
\node at (4,1) {$q^{6k+1}\beta$};
\node at (4,0) {$q^{6k-1}\beta$};
\drawblack{0.7, 0}{3.3,0}{ }{-\id}
\drawblack{0.7, 2}{3.3,2}{near end}{-\id}
\drawblack{0.7, 4}{3.3, 2.2}{sloped}{S}
\drawblackw{0.7, 0.2}{3.3, 1.8}{sloped}{S}
\drawblackw{0.7, 0.1}{3.3, 0.9}{sloped}{-/\hspace{-7pt}\bullet+h}
\drawblackw{0.7, 1.9}{3.3, 1.05}{sloped, near start, below}{S}
\drawblackw{0.7, 3}{3.3, 1.2}{sloped, near start}{\bullet \hspace{4pt}-h}
\drawblackw{0.7, 2.8}{3.3,0.2}{sloped, near end}{\id} 
\smcup{1.125}{2.7}{0.4}{thick}
\end{tikzpicture}
\]
After cancelling $-\id$ at the bottom, this is
\[
\begin{tikzpicture}
\node at (0,4) {$q^{6k-1}\alpha$};
\node at (0,3) {$q^{6k-1}\beta$};
\node at (0,2) {$q^{6k}\gamma$};
\node at (4, 3) {$q^{6k}\gamma$};
\node at (4,2) {$q^{6k+1}\beta$};
\drawblack{0.7,4}{3.3, 3.1}{sloped}{S}
\drawblack{0.7,3}{3.3,3}{ }{S}
\drawblack{0.7,2}{3.3,2}{ }{S}
\drawblack{0.7,2.1}{3.3, 2.9}{sloped, very near start}{-\id}
\drawblackw{0.7,2.9}{3.3, 2.1}{sloped, very near end}{\bullet\hspace{4pt}-/\hspace{-5pt}\bullet}
\smcup{2.68}{2.33}{0.4}{thick}
\end{tikzpicture}
\]
Cancelling $-\id$ leads to a double surgery from $\alpha$ to $\beta$, and an extra back and forth surgery from $\beta$ to $\beta$. Since this involves the cap and the straight line, the new morphism is $d$. This shows the boundary in homological degree $4k$ is as stated.

It remains to show that the cochain map $\tilde{d}_m\colon \tbar{B}_{3k}\to q^2\tbar{B}_{3k}$ simplifies to $d_m\colon q^{6k}\beta\to q^{6k+2}\beta$. We can look at the various summands, dotting of the cap, dotting of the cup, and multiplication by $h$.

It turns out that dotting the cup and multiplication by $h$ carry over easily. This is because as a morphism $\tbar{B}_{3k}\to q^2\tbar{B}_{3k}$, each object in $\tbar{B}_{3k}$ maps to the corresponding object in $q^2\tbar{B}_{3k}$ with the same behaviour, namely dotting the cup or multiplication by $h$. After performing Gaussian eliminations, the surviving objects $q^{6k}\beta$ and $q^{6k+2}\beta$ never have their morphism changed.

But for dotting the cap this is more complicated, since the surviving $q^{6k}\beta$ maps it identically to $q^{6k}\beta$, which is going to be Gaussian eliminated, leading to a zig-zag morphism from $q^{6k}\beta$ to $q^{6k+2}\beta$. Let us show that the end result is again dotting the cap.

We can first perform the Gaussian eliminations in $\tbar{B}_{3k}$, then proceed with $q^2\tbar{B}_{3k}$. After Gaussian eliminations in lower homological degrees the complex looks as in
\[
\begin{tikzpicture}
\node at (0,3.5) {$q^{6k}\beta$};
\node at (0,2) {$q^{6k+1}\gamma$};
\node at (0,0) {$q^{6k}\beta$};
\node at (4, 2) {$q^{6k+1}\gamma$};
\node at (4,1) {$q^{6k+2}\beta$};
\node at (4,0) {$q^{6k}\beta$};
\drawblack{0.7, 0}{3.3,0}{ }{-\id}
\drawblack{0.7, 2}{3.3,2}{near end}{-\id}
\drawblackw{0.7, 0.2}{3.3, 1.8}{sloped}{S}
\drawblackw{0.7, 0.1}{3.3, 0.9}{sloped}{-/\hspace{-7pt}\bullet+h}
\drawblackw{0.7, 1.9}{3.3, 1.05}{sloped, near start, below}{S}
\drawblackw{0.6, 3.4}{3.3,0.2}{sloped, near start}{\id} 
\end{tikzpicture}
\]
Gaussian elimination of the bottom $-\id$ leads to
\[
\begin{tikzpicture}
\node at (0,3) {$q^{6k}\beta$};
\node at (0,2) {$q^{6k+1}\gamma$};
\node at (4, 2) {$q^{6k+1}\gamma$};
\node at (4,1) {$q^{6k+2}\beta$};
\drawblack{0.6,3}{3.3, 2.1}{sloped}{S}
\drawblack{0.7,2}{3.3,2}{very near start}{-\id}
\drawblack{0.7,1.9}{3.3, 1}{sloped}{S}
\drawblackw{0.6, 2.8}{3.3,1.2}{sloped, near end}{-/\hspace{-7pt}\bullet+h}
\end{tikzpicture}
\]
Cancelling the final $-\id$ adds a back and forth surgery between $q^{6k}\beta$ and $q^{6k+2}\beta$, involving the cap and the straight line. So by the neck-cutting relation the remaining morphism is just dotting the cap, as claimed.
\end{proof}

\begin{remark}
In the last argument the cancelling of the $q^{6k+1}\gamma$ (and in fact also other objects in homological degrees) results also in more morphisms from $q^{6k}\beta$ to other objects, provided the tensor complex has more parts $q^2\tbar{B}\to q^4\tbar{B}$. Since in our situation, $q^4\tbar{B}$ contracts to only one object in a higher homological degree, these extra morphisms do not survive. However, when dealing with words in $\Omega^4_+$ these morphisms will come back to haunt us, and will make the argument more complicated in this situation.
\end{remark}

We are now going to apply the functor $C_L\colon \Cob(B^3_3)\to \Cob(B^2_2)$, to $\mathcal{C}_{k,l}$. In homological degrees at most $4k$ this behaves exactly as in Section \ref{sec:toruslinks}, so we only need to focus on larger homological degrees.

\begin{proposition}\label{prp:omega5twostrands}
Let $k,l\geq 1$. Then $C_L(\mathcal{C}_{k,l})$ is chain homotopy equivalent to the cochain complex $\tcal{C}_{k,l}$, which agrees with $\tcal{B}_{3k}$ in homological degrees at most $4k$, and which continues as
\[
\begin{tikzpicture}
\node at (0,1) {$q^{6k-2}\talpha$};
\node at (-0.8,1) {$\cdots$};
\node at (2.5,1) {$q^{6k}\talpha$};
\node at (2.5,0) {$q^{6k-1}\tomega$};
\node at (5,1) {$q^{6k+2}\talpha$};
\node at (5,0) {$q^{6k}\talpha$};
\node at (7.5,1) {$q^{6k+4}\talpha$};
\node at (7.5,0) {$q^{6k+2}\talpha$};
\node at (10,1) {$q^{6k+6}\talpha$};
\node at (10,0) {$q^{6k+4}\talpha$};
\node at (10.8,0.5) {$\cdots$}; 
\drawblack{0.7,1}{1.8,1}{ }{c}
\drawblack{3.2,1}{4.3,1}{ }{d}
\drawblack{3.2,0.1}{4.3,0.9}{sloped, near start}{(\hspace{3pt}\bullet - h)S}
\smcap{3.025}{0.175}{0.4}{thick}
\drawblack{3.2,0}{4.3,0}{ }{S}
\drawblack{5.7,1}{6.8,1}{ }{c}
\drawblack{5.7,0}{6.8,0}{ }{c}
\drawblack{8.2,1}{9.3,1}{ }{d}
\drawblack{8.2,0}{9.3,0}{ }{d}
\end{tikzpicture}
\]
\end{proposition}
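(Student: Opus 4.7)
The plan is to apply $C_L$ termwise to $\mathcal{C}_{k,l}$ and then simplify by delooping and Gaussian elimination, following the template used for $C_L(\mathcal{B})$ in Section \ref{sec:toruslinks}. Since the truncation of $\mathcal{C}_{k,l}$ in homological degrees at most $4k$ is exactly $\mathcal{B}_{3k}$ and the Gaussian eliminations used to establish Lemma \ref{lm:torus2ends} live entirely in those degrees (the new generators in degrees $\geq 4k+1$ receive morphisms from $\mathcal{B}_{3k}$ but do not emit any into it), the old part reduces verbatim to $\tcal{B}_{3k}$ without interference from the tail.

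Next I would deloop the new generators: for each $j\geq 1$, the object $q^{6k+2j-1}\beta$ in homological degree $4k+j$ splits as $q^{6k+2j}\talpha\oplus q^{6k+2j-2}\talpha$, producing the two-row arrangement visible in the statement of $\tcal{C}_{k,l}$. The morphism $d_m\colon q^{6k+2m-1}\beta\to q^{6k+2m+1}\beta$ unpacks into a $2\times 2$ matrix between pairs of $\talpha$; working out the entries as in Example \ref{ex:contract}, using the three summands of $d_m$ together with the delooping maps $\Phi_+,\Phi_-$, shows that one off-diagonal entry is $\pm\id$ while the two diagonal entries combine into $c$ for one parity of $m$ and into $d$ for the other. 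The morphism $C_L(D)\colon q^{6k-1}\tomega\to q^{6k+2}\talpha\oplus q^{6k}\talpha$ induced by the double surgery $D\colon\alpha\to\beta$ splits, after delooping the circle that is created, into a column of two saddles (up to sign). Performing the Gaussian eliminations on the $\pm\id$ entries from low to high homological degree then collapses two columns into one at each step and leaves exactly the staircase shape of $\tcal{C}_{k,l}$, with morphisms $c$ and $d$ alternating along both rows. One component of $C_L(D)$ survives directly as the stated morphism $S\colon q^{6k-1}\tomega\to q^{6k}\talpha$.

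The main obstacle is the diagonal $(\bullet-h)S$ from $q^{6k-1}\tomega$ to the $q^{6k+2}\talpha$ one homological degree higher: this edge is not present in the naive termwise image but is created as a zig-zag when the Gaussian elimination cancels the intermediate $q^{6k}\talpha$ that the other component of $C_L(D)$ lands in, the return path being through the appropriate entry of the $2\times 2$ matrix of $C_L(d)$. A short calculation, together with the neck-cutting relation \eqref{eq:localrelb} applied to the resulting double-cap composition, converts the zig-zag into $(\bullet-h)S$. The bookkeeping here is of the same flavour as in Lemmas \ref{lm:3kto3k+1} and \ref{lm:3k+1to3k+2}, and once this one edge is pinned down the remaining morphisms fall out from the same template, so comparison with the explicit description of $\tcal{C}_{k,l}$ completes the argument.
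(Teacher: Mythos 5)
Your setup --- apply $C_L$ termwise, deloop, and simplify --- is reasonable, and you are right that the truncation in homological degrees $\leq 4k$ reduces to $\tcal{B}_{3k}$ without interference because the tail receives but never emits morphisms into $\mathcal{B}_{3k}$. However, your analysis of the tail contains a genuine misconception that undermines the rest of the argument.

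The circle produced by $C_L$ on a $\beta$ smoothing comes from closing up the \emph{left} strand of $\beta$, which is a different connected component from the cap/cup arcs that the morphisms $d_m$ dot. Hence $C_L(d_m)$ is $d_m$ acting on the $\talpha$-part tensored with the identity on the new circle, and after delooping the induced matrix $\Phi\circ C_L(d_m)\circ\Phi^{-1}$ is \emph{diagonal}, equal to $\mathrm{diag}(d_m,d_m)$. There is no $\pm\id$ off-diagonal entry, hence no Gaussian elimination is needed or possible in the tail: the complex simply splits into two parallel rows, both of which survive, exactly as in the statement. (You seem to be conflating this with Example~\ref{ex:contract}, where the circle being dotted \emph{is} the circle being delooped; here those two circles are disjoint.) Indeed, if eliminations ``collapsed two columns into one'' as you claim, you could not reach the two-row staircase that you are trying to produce. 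The paper disposes of the whole tail in one sentence for precisely this reason: ``the morphisms between objects of homological degree larger than $4k$ are not affected by the new circle.''

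Similarly, the diagonal $(\bullet-h)S$ from $q^{6k-1}\tomega$ to $q^{6k+2}\talpha$ is not a zig-zag artefact of cancelling the intermediate $q^{6k}\talpha$ --- that object is never cancelled; it is a surviving generator of $\tcal{C}_{k,l}$. Rather, one factors $D$ as two saddles $\alpha\to\gamma\to\beta$. Applying $C_L$, the second saddle splits off the circle; composing with $\Phi_+ = (\text{dotted death})-h\cdot(\text{death})$ gives the dotted-cap-minus-$h$ cobordism on $\talpha$, while composing with $\Phi_- = \text{death}$ gives the identity. Precomposing with the first saddle $C_L(S)\colon\tomega\to\talpha$ then yields $(\bullet-h)S$ and $S$ directly. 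This is the content of the paper's explicit picture of $C_L(D)$; no Gaussian elimination is involved, only delooping and a short cobordism computation.
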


\begin{proof}
We note that $C_L(\beta) \cong q\talpha\oplus q^{-1}\talpha$. Furthermore, the morphisms between objects of homological degree larger than $4k$ are not affected by the new circle. This explains the two parallel branches starting with $q^{6k+2}\talpha$ and $q^{6k}\talpha$. Also, the $c$-morphism starting at $q^{6k-2}\talpha$ is part of $\mathcal{B}_{3k}$ and is thus from the result in Section \ref{sec:toruslinks}.

It remains to investigate the effect of $C_L$ on morphisms starting in homological degree $4k$. Consider $C_L(D)\colon C_L(q^{6k-1}\alpha)\to C_L(q^{6k+1}\beta)$. This is
\[
\begin{tikzpicture}
\node at (0,1) {$q^{6k-1}$};
\node at (3.5,1) {$q^{6k}$};
\node at (7,1) {$q^{6k+1}$};
\node at (10.5,2) {$q^{6k+2}$};
\node at (10.5,0) {$q^{6k}$};
\smalpha{0.7}{0.75}{0.5}{0.5}{very thick}
\leftcl{0.7}{0.75}{0.5}{very thick}
\smgamma{4.2}{0.75}{0.5}{0.5}{very thick}
\leftcl{4.2}{0.75}{0.5}{very thick}
\smbeta{7.7}{0.75}{0.5}{0.5}{very thick}
\leftcl{7.7}{0.75}{0.5}{very thick}
\smcup{11}{-0.125}{0.5}{very thick}
\smcap{11}{-0.25}{0.5}{very thick}
\smcup{11}{1.875}{0.5}{very thick}
\smcap{11}{1.75}{0.5}{very thick}
\drawblack{1.4,1}{3.2,1}{ }{S}
\drawblack{4.9,1}{6.5,1}{ }{S}
\drawblack{8.4,1.1}{10,1.9}{sloped, below}{\Phi_+}
\drawblack{8.4,0.9}{10, 0.1}{sloped}{\Phi_-}
\drawblack{4.9,1.1}{10, 2}{sloped}{\bullet \hspace{4pt}-h}
\drawblack{4.9, 0.9}{10,0}{sloped, below}{\id}
\smcap{7.055}{1.55}{0.4}{thick}
\end{tikzpicture}
\]
which explains the morphisms starting at $q^{6k-1}\tomega$. The remaining morphism from $q^{6k}\talpha$ to $q^{6k+2}\talpha$ is direct application of $C_L$.
\end{proof}

\begin{theorem}\label{thm:decoomega5}
Let $k,l\geq 1$ and $w = (ab)^{3k}b^{2l-1}$, and $L_w$ the closure of the braid diagram $T_w$. Then
\begin{align*}
\CBN(L_w;\Z[h])& \simeq \, q^{6k+2l-3}A \oplus u^{4k}q^{12k+2l-3}A \oplus\\
& \bigoplus_{m=0}^{k-1}u^{4m+2}q^{6(k+m)+2l+1}A(1) \oplus
\bigoplus_{m=1}^{k-1}u^{4m}q^{6(k+m)+2l-3}A(2)\oplus \\
&\bigoplus_{m=0}^{l-1}u^{4k+2m}q^{12k+2l+4m-1}A(1)\oplus
\bigoplus_{m=1}^{l-1}u^{4k+2m}q^{12k+2l+4m-3}A(1).
\end{align*}
and
\[
\CBN(L_{wb};\Z[h]) \simeq q \CBN(L_w;\Z[h]) \oplus u^{4k+2(l-1)}(q^{12k+6(l-1)+3}A \oplus q^{12k+6(l-1)+1}A).
\]
\end{theorem}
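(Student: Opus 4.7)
My plan is to apply the right-closure functor $G$ from Section \ref{sec:toruslinks} to the complex $\tcal{C}_{k,n}$ coming from Proposition \ref{prp:omega5twostrands}. Setting $n = 2l-1$ so that $w = (ab)^{3k}b^n$, combining Propositions \ref{prp:formomega5} and \ref{prp:omega5twostrands} yields
\[
\CBN(L_w;\Z[h]) \simeq q^{6k+n}\, G(\tcal{C}_{k,n}),
\]
so the problem reduces to decomposing $G(\tcal{C}_{k,n})$ up to chain homotopy into shifts of $A$, $A(1)$, and $A(2)$.

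I would split the analysis at homological degree $4k$. For degrees less than $4k$, $\tcal{C}_{k,n}$ agrees with $\tcal{B}$, and the same Gaussian eliminations used to prove Proposition \ref{prp:torusonestrand} produce the summands $q^{-2}A$, $\bigoplus_{m=0}^{k-1}u^{4m+2}q^{6m+2}A(1)$, and $\bigoplus_{m=1}^{k-1}u^{4m}q^{6m-2}A(2)$. At degree $4k$, the objects $q^{6k}\talpha$ and $q^{6k-1}\tomega$ become, after delooping the closed-off circle in $G(\tomega)$, three free summands $q^{6k}A \oplus q^{6k}A \oplus q^{6k-2}A$. In the tail at degrees $4k+1$ through $4k+n$, each row of $\talpha$'s has morphisms that, under $G$, alternate between $G(c) = 0$ and $G(d) = 2X-h$; the nonzero $d$-morphisms pair consecutive columns into copies of $A(1)$.

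The key step is the Gaussian elimination at the junction. The three junction morphisms of Proposition \ref{prp:omega5twostrands} (the $d$ from $q^{6k}\talpha$ to the top $q^{6k+2}\talpha$, the saddle $S$ from $q^{6k-1}\tomega$ to the bottom $q^{6k}\talpha$, and the diagonal $(\bullet-h)S$ composed with the cap) translate under $G$ and delooping into an explicit matrix from $q^{6k}A\oplus q^{6k}A\oplus q^{6k-2}A$ into $q^{6k+2}A\oplus q^{6k}A$. One delooped summand of $G(q^{6k-1}\tomega)$ gives an identity morphism to the bottom $q^{6k}\talpha$, which I cancel, while the other combines with the $d$-morphisms to extend the tail pairings. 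After all cancellations, the tail produces the families $\bigoplus_{m=0}^{l-1}u^{4k+2m}q^{12k+2l+4m-1}A(1) \oplus \bigoplus_{m=1}^{l-1}u^{4k+2m}q^{12k+2l+4m-3}A(1)$ (after the $q^{6k+n}$ shift), and the single surviving $q^{6k-2}A$ becomes $u^{4k}q^{12k+2l-3}A$, giving formula (1).

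The main obstacle is tracking the delooping matrices of the junction morphisms, especially the composition of $(\bullet-h)S$ with the cap, and propagating the Gaussian eliminations through the tail; these reduce to finite symbolic manipulations in $A = q\Z[X,h]/(X^2-Xh)$ using $(2X-h)^2 = h^2$ and $X(2X-h) = Xh$, analogous to those in Lemma \ref{lm:3k+2to3k+3} and Proposition \ref{prp:torusonestrand}. For the second statement, since $wb = (ab)^{3k}b^{2l}\in\Omega_5^+$, the same framework applies with $n$ replaced by $n' = 2l$; the parity change of $n'$ makes the final tail morphism $d_{n'-1} = d_{2l-1}$ be $c$-like, hence zero under $G$, leaving two tail objects unpaired. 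Careful bookkeeping of how these unpaired objects interact with the junction reductions produces the extra direct summand $u^{4k+2(l-1)}(q^{12k+6(l-1)+3}A \oplus q^{12k+6(l-1)+1}A)$, while the remaining summands match $q\CBN(L_w;\Z[h])$ since the extra crossing increases the overall $q$-shift by one.
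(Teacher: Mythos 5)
Your proposal is essentially the paper's own proof: apply the right-closure functor $G=C_R$ (plus identification) to the complex $\tcal{C}_{k,2l-1}$ from Proposition \ref{prp:omega5twostrands}, treat the degrees below $4k$ exactly as in Proposition \ref{prp:torusonestrand}, deloop the $\tomega$ at the junction and cancel the resulting identity, and read off the tail as alternating $G(c)=0$ and $G(d)=2X-h$ giving shifted copies of $A$ and $A(1)$. The only difference from the paper is cosmetic: the paper displays the post-$C_R$ complex explicitly before doing the single junction elimination, whereas you describe the same delooping and cancellation in words.
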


\begin{proof}
We take the complex $\tcal{C}_{k,2l-1}$ from Proposition \ref{prp:omega5twostrands} and apply the functor $C_R\colon \Cob(B^2_2)\to \Cob(B^1_1)$ which connects the two right-most points. The resulting complex is (apart from the parts that behave as in Section \ref{sec:toruslinks})
\[
\begin{tikzpicture}
\node at (0,1) {$q^{6k-2}$};
\dmomega{0.5}{0.7}{0.6}{very thick}
\node at (-0.8,1) {$\cdots$};
\node at (2.5,1) {$q^{6k}$};
\dmomega{3}{0.7}{0.6}{very thick}
\node at (2.5,0) {$q^{6k}$};
\dmomega{3}{-0.3}{0.6}{very thick}
\node at (2.5, -1) {$q^{6k-2}$};
\dmomega{3}{-1.3}{0.6}{very thick}
\node at (5,1) {$q^{6k+2}$};
\dmomega{5.5}{0.7}{0.6}{very thick}
\node at (5,0) {$q^{6k}$};
\dmomega{5.5}{-0.3}{0.6}{very thick}
\node at (7.5,1) {$q^{6k+4}$};
\dmomega{8}{0.7}{0.6}{very thick}
\node at (7.5,0) {$q^{6k+2}$};
\dmomega{8}{-0.3}{0.6}{very thick}
\node at (10,1) {$q^{6k+6}$};
\dmomega{10.5}{0.7}{0.6}{very thick}
\node at (10,0) {$q^{6k+4}$};
\dmomega{10.5}{-0.3}{0.6}{very thick}
\node at (11,0.5) {$\cdots$}; 
\drawblack{0.7,1}{2.2,1}{ }{0}
\drawblack{3.2,1}{4.5,1}{ }{2\boldsymbol{)}\hspace{-6pt}\bullet - \,h}
\drawblack{3.2,0.1}{4.5,0.9}{sloped}{\boldsymbol{)}\hspace{-6pt}\bullet - \,h}
\drawblack{3.2,0}{4.5,0}{ }{\id}
\drawblack{3.2,-1}{4.5,-0.1}{sloped}{\boldsymbol{)}\hspace{-4.5pt}\bullet}
\drawblack{5.7,1}{7,1}{ }{0}
\drawblack{5.7,0}{7,0}{ }{0}
\drawblack{8.2,1}{9.5,1}{ }{2\boldsymbol{)}\hspace{-6pt}\bullet - \,h}
\drawblack{8.2,0}{9.5,0}{ }{2\boldsymbol{)}\hspace{-6pt}\bullet - \,h}
\end{tikzpicture}
\]
After one Gaussian elimination and identifying the arc with $A$, the complex looks as in
\[
\begin{tikzpicture}
\node at (0,1) {$q^{6k}A$};
\node at (0,0) {$q^{6k-2}A$};
\node at (3,1) {$q^{6k+2}A$};
\node at (6,1) {$q^{6k+4}A$};
\node at (6,0) {$q^{6k+2}A$};
\node at (9,1) {$q^{6k+6}A$};
\node at (9,0) {$q^{6k+4}A$};
\node at (10,0.5) {$\cdots$};
\drawblack{0.7,1}{2.3,1}{ }{2X-h}
\drawblack{6.7,1}{8.3,1}{ }{2X-h}
\drawblack{6.7,0}{8.3,0}{ }{2X-h}
\end{tikzpicture}
\]
Note that this portion of the complex starts in homological degree $4k$, and it ends in homological degree $4k+2l-1$. Since $2l-1$ is odd, the direct summand complex of highest homological degree is an $A(1)$ type complex. For the case $2l$ it ends in two copies of $A$. The statement of the theorem now follows after noting that we need to shift the complex by $q^{6k+2l-1}$ for $\CBN(L_w;\Z[h])$ and $q^{6k+2l}$ for $\CBN(L_{wb};\Z[h])$.
\end{proof}

\section{The Bar-Natan complex for words in $\Omega_4^+$}
\label{sec:omega_four}

For $w = (ab)^{3k}a^{-l}\in \Omega_4^+$ with $k,l\geq 1$ we want to use the same technique as in Section \ref{sec:omega_five}. That is, we use
\[
\CBN(T_w;\Z[h]) \simeq q^{6k-l} \mathcal{B}_{3k}\otimes \mathcal{D}_{a^{-l}},
\]
where $\mathcal{D}_{a^{-l}}$ is the cochain complex
\[
q^{1-2l}\alpha \stackrel{d_{l-1}}{\longrightarrow} \cdots \stackrel{d_2}{\longrightarrow} q^{-3}\alpha \stackrel{d_1}{\longrightarrow}q^{-1}\alpha \stackrel{S}{\longrightarrow} \omega.
\]
The complex $\mathcal{B}_{3k}\otimes \mathcal{D}_{a^{-l}}$ is the total complex of the diagram of complexes
\[
q^{1-2l} \tbar{B}_{3k} \longrightarrow \cdots \longrightarrow q^{-3}\tbar{B}_{3k} \longrightarrow q^{-1}\tbar{B}_{3k} \longrightarrow \mathcal{B}_{3k},
\]
where $\tbar{B}_{3k} = \mathcal{B}_{3k}\otimes \alpha \simeq q^{6k}\alpha$ by Lemma \ref{lm:threektensor}. Indeed, we can use Gaussian elimination to get $\mathcal{B}_{3k}\otimes \mathcal{D}_{a^{-l}}$ chain homotopy equivalent to a complex $\mathcal{C}_{k,-l}$ that contains $\mathcal{B}_{3k}$ as a subcomplex, and so that the quotient is up to grading shift $\mathcal{D}_{a^{-1}}$, except for the $\omega$ object. This complex ends in
\[
\begin{tikzpicture}
\node at (-0.5,1) {$\cdots$};
\node at (-0.5, 2.8) {$\cdots$};
\node at (0.8, 1.8) {$q^{6k-5}\alpha$};
\node at (3.2, 1.8) {$q^{6k-4}\gamma$};
\node at (5.6, 1.8) {$q^{6k-2}\gamma$};
\node at (8, 1.8) {$q^{6k-1}\alpha$};
\node at (0.8, 0.2) {$q^{6k-5}\beta$};
\node at (3.2, 0.2) {$q^{6k-4}\delta$};
\node at (5.6, 0.2) {$q^{6k-2}\delta$};
\node at (8, 0.2) {$q^{6k-1}\beta$};
\node at (5.6, 2.8) {$q^{6k-1}\alpha$};
\node at (3.2, 2.8) {$q^{6k-3}\alpha$};
\node at (0.8, 2.8) {$q^{6k-5}\alpha$};
\draw[->] (1.5, 1.8) -- node [above, scale = 0.7] {$-S$} (2.5, 1.8);
\draw[->] (1.5, 0.2) -- node [above, scale = 0.7] {$-S$} (2.5, 0.2);
\draw[->] (3.9, 1.8) -- node [above, scale = 0.7] {$d$} (4.9, 1.8);
\draw[->] (3.9, 0.2) -- node [above, scale = 0.7] {$d$} (4.9, 0.2);
\draw[->] (6.3, 1.8) -- node [above, scale = 0.7] {$-S$} (7.3, 1.8);
\draw[->] (6.3, 0.2) -- node [above, scale = 0.7] {$-S$} (7.3, 0.2);
\draw[->] (1.4, 0.4) -- node [above, scale = 0.7, near start, sloped] {$S$} (2.6, 1.6);
\draw[->] (3.8, 0.4) -- node [above, scale = 0.7, near start, sloped] {$D$} (5, 1.6);
\draw[->] (6.2, 0.4) -- node [above, scale = 0.7, near start, sloped] {$S$} (7.4, 1.6);
\draw[-, line width=6pt, color = white] (1.4, 1.6) -- (2.6, 0.4);
\draw[-, line width=6pt, color = white] (3.8, 1.6) -- (5, 0.4);
\draw[-, line width=6pt, color = white] (6.2, 1.6) -- (7.4, 0.4);
\draw[->] (1.4, 1.6) -- node [above, scale = 0.7, near end, sloped] {$S$} (2.6, 0.4);
\draw[->] (3.8, 1.6) -- node [above, scale = 0.7, near end, sloped] {$D$} (5, 0.4);
\draw[->] (6.2, 1.6) -- node [above, scale = 0.7, near end, sloped] {$S$} (7.4, 0.4);
\draw[->] (1.4, 2.8) -- node [above, scale = 0.7] {$d_2$} (2.6, 2.8);
\draw[->] (6.2, 2.6) -- node [above, scale = 0.7, sloped] {$f_0$} (7, 2.2);
\draw[->] (3.8,  2.8) -- node [above, scale = 0.7] {$d_1$} (5, 2.8);
\draw[->] (3.8, 2.6) -- node [above, scale = 0.7, sloped] {$f_1$} (4.6, 2.2);
\draw[->] (1.4, 2.6) -- node [above, scale = 0.7, sloped] {$f_2$}(2.2, 2.2);
\end{tikzpicture}
\]
but it is not immediately clear what the morphisms $f_i$ between the objects in the top row and the objects in the $\mathcal{B}_{3k}$ subcomplex are. To work out these morphisms, we need to take a closer look how this complex arises from Gaussian eliminations. Note we can start by performing all Gaussian eliminations in $q^{1-2l}\tbar{B}_{3k}$, resulting in a single object $q^{6k+1-2l}\alpha$, then continue with Gaussian eliminations in $q^{3-2l}\tbar{B}_{3k}$, and so on.

Let us continue until we have done all the Gaussian eliminations up until $q^{-3}\tbar{B}_{3k}$. We then have a subcomplex of $\mathcal{B}_{3k}\otimes \mathcal{D}_{a^{-l}}$ that looks as in Figure \ref{fig:last_step}.

\begin{figure}[ht]
\begin{tikzpicture}[scale = 0.875, transform shape]
\node at (-3.3,3.5) {$\cdots$};
\node at (-2.5,2.5) {$q^{6k-8}\delta$};
\node at (-2.5,3.5) {$q^{6k-9}\alpha$};
\node at (-2.5,4.5) {$q^{6k-7}\alpha$};
\node at (-0.8, 0.5) {$\cdots$};
\node at (0,0) {$q^{6k-7}\beta$};
\node at (0,1) {$q^{6k-7}\alpha$};
\node at (0,2.5) {$q^{6k-6}\delta$};
\node at (0,3.5) {$q^{6k-7}\alpha$};
\node at (0,4.5) {$q^{6k-5}\alpha$};
\node at (2.5,0) {$q^{6k-5}\beta$};
\node at (2.5,1) {$q^{6k-5}\alpha$};
\node at (2.5,2.5) {$q^{6k-6}\delta$};
\node at (2.5,3.5) {$q^{6k-4}\delta$};
\node at (2.5, 4.5) {$q^{6k-5}\alpha$};
\node at (5,0) {$q^{6k-4}\delta$};
\node at (5,1) {$q^{6k-4}\gamma$};
\node at (5,2.5) {$q^{6k-4}\delta$};
\node at (5,3.5) {$q^{6k-2}\delta$};
\node at (5,4.5) {$q^{6k-3}\alpha$};
\node at (7.5,0) {$q^{6k-2}\delta$};
\node at (7.5,1) {$q^{6k-2}\gamma$};
\node at (7.5,2.5) {$q^{6k-2}\delta$};
\node at (7.5,3.5) {$q^{6k-3}\alpha$};
\node at (7.5,4.5) {$q^{6k-1}\alpha$};
\node at (10,0) {$q^{6k-1}\beta$};
\node at (10,1) {$q^{6k-1}\alpha$};
\drawblack{-1.8,4.5}{-0.6,3.5}{sloped}{\id}
\drawblack{-1.8,4.3}{-0.6,1.2}{sloped}{\id}
\drawblack{0.7,0}{1.9,0}{}{}
\drawblack{0.7,0.1}{1.9,0.9}{}{}
\drawblackw{0.7,0.9}{1.9,0.1}{}{}
\drawblack{0.7,1}{1.9,1}{}{}
\drawblack{3.2,0.1}{4.4,0.9}{}{}
\drawblack{3.2,1}{4.4,1}{}{}
\drawblackw{3.2,0.9}{4.4,0.1}{}{}
\drawblack{3.2,0}{4.4,0}{ }{ }
\drawblack{5.7,3.6}{6.9,4.4}{sloped, near end}{S}
\drawblackw{5.7,4.3}{6.9,2.8}{sloped, near end}{S}
\drawblack{5.7,4.5}{6.9,4.5}{ }{-\boldsymbol{(}\hspace{-7pt}\bullet-h}
\drawblack{5.7,1}{6.9,1}{}{}
\drawblack{5.7,0.1}{6.9,0.9}{}{}
\drawblackw{5.7,0.9}{6.9,0.1}{}{}
\drawblack{5.7,0}{6.9,0}{}{}
\drawblack{8.2,1}{9.4,1}{}{}
\drawblack{8.2,0.1}{9.4,0.9}{}{}
\drawblackw{8.2,0.9}{9.4,0.1}{}{}
\drawblack{8.2,0}{9.4,0}{}{}
\drawblack{8.2,4.3}{9.4,1.2}{sloped}{\id}
\drawblackw{5.7,4.1}{6.9,1.2}{sloped, near end}{-S}
\drawblackw{5.7,4.4}{6.9,3.6}{sloped, near end}{-\id}
\drawblackw{5.7,3.5}{6.9,2.6}{sloped}{-\id}
\drawblackw{5.7,3.4}{6.9,0.2}{sloped}{-\id}
\drawblack{3.2,3.4}{4.4,2.6}{sloped}{\id}
\drawblackw{3.2,3.3}{4.4,0.2}{sloped}{\id}
\drawblack{0.7,4.3}{1.9,1.2}{sloped, near end}{-\id}
\drawblackw{0.7,2.7}{1.9,4.3}{sloped}{S}
\drawblack{0.7,4.5}{1.9,4.5}{}{-\id}
\drawblackw{0.7,2.5}{1.9,2.5}{near end}{-\id}
\drawblackw{0.7,2.3}{1.9,0.2}{sloped}{-S}
\end{tikzpicture}
\caption{\label{fig:last_step} Surgery from $q^{-1}\tbar{B}_{3k}$ after delooping to $\mathcal{B}_{3k}$. Only relevant morphisms are shown, compare Figure \ref{fig:contract_eps}.}
\end{figure}

We first want to work out how the objects $q^{6k+1-2l}\alpha$ map into this complex.

\begin{lemma}\label{lm:intoqminusone}
After Gaussian eliminations in all $q^{1-2j}\tbar{B}_{3k}$ for $j = 2,\ldots,l$, the objects $q^{6k+1-2j}\alpha$ with $2\leq j\leq 4k+1$ map into $q^{-1}\tbar{B}_{3k}$ as follows ($i = \lfloor (j-2)/4\rfloor$).
\[
\begin{tikzpicture}[scale = 0.875, transform shape]
\node at (10.5,3) {$q^{6k-3-8i}\alpha$};
\node at (12.5,2) {$q^{6k-1-6i}\alpha$};
\node at (12.5,1) {$q^{6k-3-6i}\alpha$};
\node at (7,3) {$q^{6k-5-8i}\alpha$};
\node at (9,2) {$q^{6k-3-6i}\alpha$};
\node at (9,1) {$q^{6k-2-6i}\delta$};
\node at (9,0) {$q^{6k-4-6i}\delta$};
\node at (3.5,3) {$q^{6k-7-8i}\alpha$};
\node at (5.5,1) {$q^{6k-4-6i}\delta$};
\node at (5.5,0) {$q^{6k-6-6i}\delta$};
\node at (0,3) {$q^{6k-9-8i}\alpha$};
\node at (2,2) {$q^{6k-5-6i}\alpha$};
\node at (2,1) {$q^{6k-7-6i}\alpha$};
\node at (2,0) {$q^{6k-6-6i}\delta$};
\drawblack{10.5,2.8}{11.6,2}{sloped}{f}
\drawblack{10.5,2.6}{11.6,1}{sloped, near end}{D^i}
\drawblack{7,2.8}{8.1,2}{sloped}{\ir}
\drawblack{7,2.7}{8.1,1}{sloped, near end}{\ir}
\drawblack{7,2.6}{8.1,0}{sloped, near end}{-SD^i}
\drawblack{3.5,2.8}{4.6,1}{sloped}{\ir}
\drawblack{3.5,2.6}{4.6,0}{sloped,near end}{SD^i}
\drawblack{0,2.8}{1.1,2}{sloped}{\ir}
\drawblack{0,2.7}{1.1,1}{sloped, near end}{-D^{i+1}}
\drawblack{0,2.6}{1.1,0}{sloped, near end}{\ir}
\end{tikzpicture}
\]
Here $D\colon \alpha\to q^2\alpha$ is a double surgery via $q\delta$, $\ir$ simply means that the morphism is non-zero and the exact form is not important, while $f$ is dotting the cup for $i = 0$ and $\ir$ otherwise.
\end{lemma}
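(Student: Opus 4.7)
I will prove Lemma \ref{lm:intoqminusone} by induction on $i\geq 0$ (equivalently, by induction on $j$ in steps of four, handling the four residues $j\equiv 2,3,4,5\pmod 4$ in a single block at each stage). The base case $i=0$ corresponds to $j\in\{2,3,4,5\}$, i.e.\ to the objects $q^{6k-3}\alpha,q^{6k-5}\alpha,q^{6k-7}\alpha,q^{6k-9}\alpha$. For these, the required morphisms can be read off directly from Figure \ref{fig:last_step}, after performing the Gaussian eliminations in $q^{-3}\tbar{B}_{3k},q^{-5}\tbar{B}_{3k},q^{-7}\tbar{B}_{3k},q^{-9}\tbar{B}_{3k}$: the rightmost column of the lemma's diagram records precisely which surgeries from $q^{-1}\tbar{B}_{3k}$ hit the various surviving objects of $\mathcal{B}_{3k}$, with the morphism $f$ being dotting the cup because at $i=0$ the relevant surgery from $q^{6k-3}\alpha$ to $q^{6k-1}\alpha$ lies in the innermost block of Figure \ref{fig:last_step} and is not obscured by any prior cancellations.

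For the inductive step, I assume the statement for $i$ and increase $j$ by four to obtain the result for $i+1$. The key structural input is the periodicity $\mathcal{B}^{i+4}=q^6\mathcal{B}^i$ (hence also of $\tbar{B}_{3k}$), which means that Gaussian eliminations within each $q^{1-2j}\tbar{B}_{3k}$ follow the same pattern as Figure \ref{fig:contract_eps}, shifted by $q^6$ in each period. After the cancellations in $q^{-1-2(j+4)}\tbar{B}_{3k},\ldots,q^{-7-2(j+4)}\tbar{B}_{3k}$ are performed, the surviving generators acquire morphisms into the previous column by composition with the boundary of $\tbar{B}_{3k}$, and further composition with the morphisms from the inductive hypothesis yields morphisms into $q^{-1}\tbar{B}_{3k}$.

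The exact form of these compositions is then computed using two tools. First, a double surgery $D\colon \alpha\to q^2\alpha$ (factoring through $q\delta$) arises whenever two consecutive $S$-morphisms between $\alpha$-type objects are composed during cancellation, and the neck-cutting relation \eqref{eq:localrelb} identifies the resulting cobordism with $D$ up to lower-order terms. Composing $i$ such rounds produces the morphism $D^i$ shown in the middle entries of the diagram. Second, for the morphisms labelled $\ir$, the composite involves a birth followed by a delooping component $\Phi_\pm$ and then the cobordism going out of the surviving object; tracking $q$-degrees shows the composite is non-zero, but its precise form is swallowed by subsequent cancellations in the final passage to $\mathcal{B}_{3k}$, so only non-vanishing matters.

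The main obstacle, as in Proposition \ref{prp:formomega5}, is bookkeeping: since $\tbar{B}_{3k}$ only contracts up to chain homotopy equivalence, one must be careful that morphisms introduced by intermediate Gaussian eliminations, which look like they might affect the final answer, actually factor through objects which have been (or will be) cancelled, and therefore disappear. The periodicity of $\tbar{B}_{3k}$ together with the fact that no morphism in $\tbar{B}_{3k}$ decreases the $q$-grading (as noted in the discussion after Figure \ref{fig:contract_eps}) ensures that these cancellations can be performed in parallel and in increasing homological order, so that the inductive step indeed reduces to a finite local computation that matches the stated pattern.
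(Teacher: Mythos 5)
Your high-level strategy — an induction exploiting the periodicity of $\tbar{B}_{3k}$, with the $q$-grading observation ruling out back-contaminating morphisms — is the same as the paper's, but the execution has two concrete problems.

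\textbf{Wrong figure for the base case.} You claim the $i=0$ case can be read off from Figure \ref{fig:last_step}, but that figure records the morphisms \emph{from} $q^{-1}\tbar{B}_{3k}$ \emph{into} $\mathcal{B}_{3k}$ (the data used in the proof of Proposition \ref{prp:formomega4}, not in this lemma). Lemma \ref{lm:intoqminusone} is about how the surviving object $q^{6k+1-2j}\alpha$ of a contracted $q^{1-2j}\tbar{B}_{3k}$ maps into the objects of the delooped $q^{-1}\tbar{B}_{3k}$. The figure you need is Figure \ref{fig:inbetweensteps}, which displays the dotting differential between two consecutive columns $q^{x}\tbar{B}_{3k}$ and $q^{x+2}\tbar{B}_{3k}$ after delooping — that is the only figure recording the interface across which your base-case morphisms pass. (Your statement that ``the rightmost column of the lemma's diagram records precisely which surgeries from $q^{-1}\tbar{B}_{3k}$ hit the various surviving objects of $\mathcal{B}_{3k}$'' confirms the confusion: the targets $q^{6k-1-6i}\alpha$, $q^{6k-3-6i}\alpha$, etc.\ live in $q^{-1}\tbar{B}_{3k}$, not in $\mathcal{B}_{3k}$.) The paper's base case is in fact the single object $q^{6k-3}\alpha$, established from Figure \ref{fig:inbetweensteps}, and the other three $i=0$ objects are obtained from the induction step, not read off directly.

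\textbf{The inductive step is asserted, not computed.} The content of the lemma is the precise list of coefficients: $f$, $D^{i}$, $\pm SD^{i}$, $-D^{i+1}$. Your argument says that composing ``$i$ such rounds'' of double surgeries gives $D^{i}$, that the neck-cutting relation identifies composites ``up to lower-order terms,'' and that $\ir$-morphisms ``are swallowed by subsequent cancellations.'' None of these is verified; in particular the sign pattern and the fact that the composite $SSD^{i}$ equals $D^{i+1}$ (rather than, say, acquiring a dotted or $h$-correction from the neck-cutting relation) require checking. Your final paragraph acknowledges the ``main obstacle'' — that intermediate Gaussian eliminations might introduce stray morphisms — but then only asserts that this does not happen because of periodicity and the $q$-grading inequality. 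The paper actually performs one full induction step (the passage from $q^{6k-7-8i}\alpha$ to $q^{6k-9-8i}\alpha$), identifying the two $-\id$-cancellations in $q^{-3}\tbar{B}_{3k}$ (with Figure \ref{fig:inbetweensteps}), computing $SSD^{i}=D^{i+1}$ and $-\id\circ D^{i+1}=-D^{i+1}$ explicitly, and checking at the end that the one potentially troublesome $\ir$-morphism does not propagate. You need to carry out at least one such step in detail (and convince the reader the other three residues of $j$ are analogous) for the proof to be complete.
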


\begin{proof}
For $q^{6k-3}\alpha$ in $q^{-3}\tbar{B}_{3k}$ this follows from Figure \ref{fig:inbetweensteps} with $m = 6k-3$.
\begin{figure}[ht]
\begin{tikzpicture}[scale = 0.885, transform shape] 
\node at (-3.3,3.5) {$\cdots$};
\node at (-2.5,2.5) {$q^{m-7}\delta$};
\node at (-2.5,3.5) {$q^{m-8}\alpha$};
\node at (-2.5,4.5) {$q^{m-6}\alpha$};
\node at (0,-1) {$q^{m-5}\delta$};
\node at (0,0) {$q^{m-6}\alpha$};
\node at (0,1) {$q^{m-4}\alpha$};
\node at (0,2.5) {$q^{m-5}\delta$};
\node at (0,3.5) {$q^{m-6}\alpha$};
\node at (0,4.5) {$q^{m-4}\alpha$};
\node at (-0.8, 0) {$\cdots$};
\node at (2.5,-1) {$q^{m-3}\delta$};
\node at (2.5,0) {$q^{m-4}\alpha$};
\node at (2.5,1) {$q^{m-2}\alpha$};
\node at (2.5,2.5) {$q^{m-5}\delta$};
\node at (2.5,3.5) {$q^{m-3}\delta$};
\node at (2.5, 4.5) {$q^{m-4}\alpha$};
\node at (5,-1) {$q^{m-3}\delta$};
\node at (5,0) {$q^{m-1}\delta$};
\node at (5,1) {$q^{m-2}\alpha$};
\node at (5,2.5) {$q^{m-3}\delta$};
\node at (5,3.5) {$q^{m-1}\delta$};
\node at (5,4.5) {$q^{m-2}\alpha$};
\node at (7.5,-1) {$q^{m-1}\delta$};
\node at (7.5,0) {$q^{m+1}\delta$};
\node at (7.5,1) {$q^{m}\alpha$};
\node at (7.5,2.5) {$q^{m-1}\delta$};
\node at (7.5,3.5) {$q^{m-2}\alpha$};
\node at (7.5,4.5) {$q^{m}\alpha$};
\node at (10,-1) {$q^{m+1}\delta$};
\node at (10,0) {$q^{m}\alpha$};
\node at (10,1) {$q^{m+2}\alpha$};
\drawblack{-1.8,4.5}{-0.6,3.6}{sloped}{\id}
\drawgray{-1.8,4.4}{-0.6,1.2}
\drawblack{-1.8,4.3}{-0.6,0.2}{sloped}{\id}
\drawblack{0.7,0.9}{1.9,0.1}{sloped, near start}{\id}
\drawblack{0.7,4.3}{1.9,0.2}{sloped, below}{-\id}
\drawgray{0.7,4.4}{1.9,1.2}
\drawblackw{0.7,2.7}{1.9,4.3}{sloped}{S}
\drawblackw{0.7,2.5}{1.9,2.5}{near end}{-\id}
\drawgray{0.7,2.3}{1.9,-0.8}
\drawblack{0.7,4.5}{1.9,4.5}{}{-\id}
\drawblack{3.2,-1}{4.4,-1}{}{-\id}
\drawblackw{3.2,-0.8}{4.4,0.8}{sloped}{S}
\drawgray{3.2,3.4}{4.4,0.2}
\drawblack{3.2,1}{4.4,1}{very near start}{-\id}
\drawblackw{3.1,3.3}{4.4,-0.8}{sloped, very near end}{\id}
\drawblack{3.2,3.5}{4.4,2.6}{sloped}{\id}
\drawblack{5.7,-0.1}{6.9,-0.9}{sloped}{\id}
\drawblack{5.7,4.3}{6.9,2.7}{sloped}{S}
\drawgray{5.7,4.1}{6.9,1.2}
\drawgray{5.6,3.4}{6.9,0.2}
\drawblackw{5.6,3.3}{7,-0.7}{sloped, below}{-\id}
\drawblack{5.7,3.5}{6.9,2.5}{sloped, near start}{-\id}
\drawblack{5.7,4.5}{6.9,3.6}{sloped}{-\id}
\drawblack{8.2,0.1}{9.4,0.9}{sloped, very near start}{S}
\drawblackw{8.2,0.8}{9.4,-0.8}{sloped, near end}{S}
\drawblackw{8.2,0.9}{9.4,0.1}{sloped, near start}{-\id}
\drawblack{8.2,1}{9.4,1}{near start}{-\boldsymbol{(}\hspace{-7pt}\bullet-h}
\drawblack{8.2,-0.1}{9.4,-0.9}{sloped, near start}{-\id}
\drawblackw{7.9,4.2}{9.5,0.2}{sloped, below}{\id}
\drawblack{8,4.4}{9.4,1.2}{sloped}{f}
\end{tikzpicture}
\caption{\label{fig:inbetweensteps} Dotting morphisms between $q^x\tbar{B}_{3k}$ and $q^{x+2}\tbar{B}_{3k}$. Only relevant morphisms are shown in black, some of the irrelevant induced by the dotting morphism are shown.}
\end{figure}

We now need to do four induction steps, namely showing that the statement for $q^{6k-j-8i}\alpha$ implies the statement for $q^{6k-(j-2)-8i}\alpha$ for $j = 3, 5, 7, 9$. All of these implications are very similar, so we will only show the case for $j=7$, leaving the rest to the reader.

So assume that $q^{6k-7-8i}\alpha$ maps into $q^{-1}\tbar{B}_{3k}$ as stated. Then $q^{6k-9-8i}\alpha$ maps into $q^{-3}\tbar{B}_{3k}$ in the same way, that is, there is an $\ir$-morphism to $q^{6k-6-6i}\delta$ and a $SD^i$-morphism to $q^{6k-8-6i}\delta$. We can think of these two objects as being in the third column of the top half of Figure \ref{fig:inbetweensteps} with $m = 6k-3-6i$. The top half of this figure contains the objects of $q^{-3}\tbar{B}_{3k}$ while the lower half contains the objects of $q^{-1}\tbar{B}_{3k}$.

We now perform the Gaussian eliminations in $q^{-3}\tbar{B}_{3k}$, starting from the left. When we get to the two $-\id$-morphisms between the second and third column, let us begin with the lower one between $q^{m-5=6k-8-6i}\delta$ objects. After this cancellation, the $q^{6k-9-8i}\alpha$ object maps to $q^{m-4=6k-7-6i}\alpha$ in $q^{-3}\tbar{B}_{3k}$ with $SSD^i = D^{i+1}$, and to $q^{m-3=6k-6-6i}\delta$ in $q^{-1}\tbar{B}_{3k}$ with $\ir$ (composing with the gray morphism).

Performing the other $-\id$-Gaussian elimination between $q^{6k-7-6i}\alpha$ objects creates a morphism $\ir$ from $q^{6k-9-8i}\alpha$ to $q^{m-2=6k-5-6i}\alpha$ in $q^{-1}\tbar{B}_{3k}$, and a $-\id\circ D^{i+1}=-D^{i+1}$ morphism to $q^{m-4=6k-7-6i}\alpha$ in $q^{-1}\tbar{B}_{3k}$. This finishes this induction step. Notice that the $\ir$-morphism to $q^{6k-6-6i}\delta$ in $q^{-3}\tbar{B}_{3k}$ does not lead to further morphisms after the cancellations in $q^{-3}\tbar{B}_{3k}$.
\end{proof}

\begin{proposition}\label{prp:formomega4}
Let $w=(ab)^{3k}a^{-l}$ with $k,l\geq 1$. Then, if $l \geq 4$, the complex $\mathcal{C}_{k,-l}$ has the form
\[
\begin{tikzpicture}
\node at (-0.5,1) {$\cdots$};
\node at (-2.6, 3.3) {$\cdots$};
\node at (0.8, 1.8) {$q^{6k-5}\alpha$};
\node at (3.2, 1.8) {$q^{6k-4}\gamma$};
\node at (5.6, 1.8) {$q^{6k-2}\gamma$};
\node at (8, 1.8) {$q^{6k-1}\alpha$};
\node at (0.8, 0.2) {$q^{6k-5}\beta$};
\node at (3.2, 0.2) {$q^{6k-4}\delta$};
\node at (5.6, 0.2) {$q^{6k-2}\delta$};
\node at (8, 0.2) {$q^{6k-1}\beta$};
\node at (5.6, 3.3) {$q^{6k-1}\alpha$};
\node at (3.2, 3.3) {$q^{6k-3}\alpha$};
\node at (0.8, 3.3) {$q^{6k-5}\alpha$};
\node at (-1.6, 3.3) {$q^{6k-7}\alpha$};
\draw[->] (1.5, 1.8) -- node [above, scale = 0.7, very near start] {$-S$} (2.5, 1.8);
\draw[->] (1.5, 0.2) -- node [above, scale = 0.7] {$-S$} (2.5, 0.2);
\draw[->] (3.9, 1.8) -- node [above, scale = 0.7, very near start] {$d$} (4.9, 1.8);
\draw[->] (3.9, 0.2) -- node [above, scale = 0.7] {$d$} (4.9, 0.2);
\draw[->] (6.3, 1.8) -- node [above, scale = 0.7] {$-S$} (7.3, 1.8);
\draw[->] (6.3, 0.2) -- node [above, scale = 0.7] {$-S$} (7.3, 0.2);
\draw[->] (1.4, 0.4) -- node [above, scale = 0.7, near start, sloped] {$S$} (2.6, 1.6);
\draw[->] (3.8, 0.4) -- node [above, scale = 0.7, near start, sloped] {$D$} (5, 1.6);
\draw[->] (6.2, 0.4) -- node [above, scale = 0.7, near start, sloped] {$S$} (7.4, 1.6);
\draw[-, line width=6pt, color = white] (1.4, 1.6) -- (2.6, 0.4);
\draw[-, line width=6pt, color = white] (3.8, 1.6) -- (5, 0.4);
\draw[-, line width=6pt, color = white] (6.2, 1.6) -- (7.4, 0.4);
\draw[->] (1.4, 1.6) -- node [above, scale = 0.7, near start, sloped] {$S$} (2.6, 0.4);
\draw[->] (3.8, 1.6) -- node [above, scale = 0.7, near start, sloped] {$D$} (5, 0.4);
\draw[->] (6.2, 1.6) -- node [above, scale = 0.7, near end, sloped] {$S$} (7.4, 0.4);
\draw[->] (1.4, 3.3) -- node [above, scale = 0.7] {$d$} (2.6, 3.3);
\draw[->] (3.8,  3.3) -- node [above, scale = 0.7] {$c$} (5, 3.3);
\draw[->] (-1, 3.3) -- node [above, scale = 0.7] {$c$} (0.2, 3.3);
\drawblack{6.3, 3.2}{7.4, 2}{sloped}{\id} 
\drawblackw{3.8, 3.1}{5, 0.6}{sloped, very near end}{-S}
\drawblack{3.8, 3.2}{5, 2}{sloped}{-S}
\drawblackw{1.4, 3.1}{2.6, 0.6}{sloped, near start}{S}
\drawblack{-1,3.2}{0.2,2}{sloped}{-D}
\drawblack{-1,3.1}{0.2,0.6}{sloped}{-\bar{D}}
\end{tikzpicture}
\]
If $l \leq3$, this holds if we consider the subcomplex after removing the objects in the top row of the form $q^{6k-1-2j}\alpha$, with $j \geq l$.

If $l > 4$, we get morphisms from $q^{6k+1-2j}$ with $j <4k+1$ into $\mathcal{B}_{3k}$ as follows.
\[
\begin{tikzpicture}[scale = 0.875, transform shape]
\node at (10.5,3) {$q^{6k-3-8i}\alpha$};
\node at (12.5,1.5) {$q^{6k-2-6i}\gamma$};
\node at (12.5,0.5) {$q^{6k-2-6i}\delta$};
\node at (7,3) {$q^{6k-5-8i}\alpha$};
\node at (9,0.5) {$q^{6k-4-6i}\delta$};
\node at (3.5,3) {$q^{6k-7-8i}\alpha$};
\node at (5.5,1.5) {$q^{6k-5-6i}\alpha$};
\node at (5.5,0.5) {$q^{6k-5-6i}\beta$};
\node at (0,3) {$q^{6k-9-8i}\alpha$};
\node at (2,1.5) {$q^{6k-7-6i}\alpha$};
\drawblack{10.5,2.8}{11.6,1.5}{sloped}{-SD^i}
\drawblack{10.5,2.6}{11.6,0.5}{sloped, near end}{-SD^i}
\drawblack{7,2.8}{8.1,0.5}{sloped}{SD^i}
\drawblack{3.5,2.8}{4.6,1.5}{sloped}{-D^{i+1}}
\drawblack{3.5,2.6}{4.6,0.5}{sloped,near end}{-\bar{D}D^i}
\drawblack{0,2.8}{1.1,1.5}{sloped}{D^{i+1}}
\end{tikzpicture}
\]
Finally, if $l\geq 4k+1$ there is a morphism
\[
\begin{tikzpicture}
\node at (0,3) {$q^{-2k-1}\alpha$};
\node at (2,1.5) {$\omega.$};
\drawblack{0,2.8}{1.6,1.5}{sloped}{SD^{k}}
\end{tikzpicture}
\]
Here, $D$ is a back-and-forth surgery from $\alpha$ to $q^2\alpha$ via $q\delta$, while $\bar{D}$ is a double surgery from $\alpha$ to $q^2\beta$. Also, $S$ stands for a surgery determined by domain and codomain.
\end{proposition}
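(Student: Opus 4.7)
My plan is to work from the state captured in Figure \ref{fig:last_step}, which records the complex after all Gaussian eliminations have been performed inside $q^{1-2j}\tbar{B}_{3k}$ for $j=2,\dots,l$, and to take Lemma \ref{lm:intoqminusone} as the input describing how each surviving top-row generator $q^{6k+1-2j}\alpha$ maps into the still-unreduced column $q^{-1}\tbar{B}_{3k}$. The complex $\mathcal{C}_{k,-l}$ is then obtained by performing the remaining $\pm\id$ cancellations in Figure \ref{fig:last_step} and reading off the resulting zig-zag morphisms into $\mathcal{B}_{3k}$. The internal structure of the top row, i.e.\ the alternating $c$ and $d$ morphisms between consecutive $q^{6k+1-2j}\alpha$'s, comes almost for free: the boundary maps $d_i$ of $\mathcal{D}_{a^{-l}}$ tensor through $\mathcal{B}_{3k}$ and then descend via Lemma \ref{lm:threektensor} exactly as in Example \ref{ex:contract} and the proof of Proposition \ref{prp:toruslink2}, yielding the $c$-type morphism for odd index and the $d$-type morphism for even index.

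The main step is the descent into $\mathcal{B}_{3k}$. I would perform the $\pm\id$ cancellations of Figure \ref{fig:last_step} sequentially and track how the edges from Lemma \ref{lm:intoqminusone} are carried through them. Each cancellation removes a pair of objects and composes any incoming edge with any outgoing edge. The explicit morphisms of the lemma ($f$, $D^i$, $SD^i$) combine with these $\pm\id$ cancellations, and with the surgery edges of Figure \ref{fig:last_step} going from $q^{-1}\tbar{B}_{3k}$ into $\mathcal{B}_{3k}$, to produce the advertised $SD^i$, $-SD^i$, $-D^{i+1}$, $-\bar{D}D^i$, and $D^{i+1}$ edges; the case split is by $j\bmod 4$, mirroring the four patterns of Lemma \ref{lm:intoqminusone}, and in each case the resulting composition is a zig-zag of length at most four and can be read off directly from Figure \ref{fig:last_step}. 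The key point that makes the statement as clean as advertised is that the $\ir$-morphisms in Lemma \ref{lm:intoqminusone} are inert: either their targets get cancelled in pairs without producing a surviving outgoing edge, or the compositions they induce involve two consecutive surgeries on the same seam and collapse via the neck-cutting relation (\ref{eq:localrelb}) into morphisms already accounted for.

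The boundary cases are largely bookkeeping. For $l\leq 3$ the complex $\mathcal{D}_{a^{-l}}$ simply has too few $\alpha$'s, so the advertised truncation is immediate. For $l\geq 4k+1$ the top row extends to $q^{-2k-1}\alpha$ corresponding to $j=4k+1$; applying Lemma \ref{lm:intoqminusone} with $i=k-1$ and then post-composing with the final cancellations and the terminal surgery $q^{-1}\alpha\to\omega$ of $\mathcal{D}_{a^{-l}}$, the only surviving edge out of $q^{-2k-1}\alpha$ into $\omega$ is the claimed $SD^k$, realised as the double-surgery tower $D^k$ followed by one additional saddle. I expect the main obstacle to lie precisely in this tracking: verifying sign-by-sign that the $\ir$-morphisms are indeed inert, and that the interplay between $D$ (a back-and-forth $\alpha\to\alpha$ double surgery through $\delta$) and $\bar{D}$ (a double surgery $\alpha\to\beta$) yields the correct exponents under neck-cutting. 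Both the length of the zig-zags and the recursive appearance of $D^i$ as $i$ grows with $j$ force careful combinatorics here, and any sign error propagates through the entire tower.
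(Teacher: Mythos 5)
Your proposal follows essentially the same route as the paper: start from Figure \ref{fig:last_step} together with Lemma \ref{lm:intoqminusone}, perform the remaining Gaussian eliminations in $q^{-1}\tbar{B}_{3k}$, and read off the induced zig-zag morphisms into $\mathcal{B}_{3k}$, handling the $l\le 3$ and $l\ge 4k+1$ boundary cases by truncation and post-composition with the terminal surgery to $\omega$. The paper's own proof is terse (``we leave the details to the reader''), and your proposal fills in the same outline with reasonable attention to the sign tracking and the inertness of the $\ir$-morphisms, so it is a correct rendering of the intended argument.
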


\begin{proof}
The proof is similar to the proof of Lemma \ref{lm:intoqminusone}. From this lemma we know how $q^{6k-j-8i}\alpha$ maps into $q^{-1}\tbar{B}_{3k}$. In particular, we know how these objects map into Figure \ref{fig:last_step} (we can shift each object there by a $q^{-6i}$ to get the general case). Now performing Gaussian eliminations in $q^{-1}\tbar{B}_{3k}$ leads to morphisms as prescribed. We leave the details to the reader.
\end{proof}

We now apply the functor $C_L\colon \Cob(D^3_3)\to \Cob(D^2_2)$ obtained by connecting the left-most strands. Applying the same Gaussian eliminations as in Section \ref{sec:toruslinks}, as well as the Gaussian elimination of the two objects $q^{6k-1}\alpha$ leads to the following.

\begin{proposition}
Let $k,l\geq 1$. Then $C_L(\mathcal{C}_{k,-l})$ is chain homotopy equivalent to a cochain complex $\tcal{C}_{k,-l}$ of the form
\[
\begin{tikzpicture}[scale = 0.75, transform shape]
\node at (-0.8,0.5) {$\cdots$};
\node at (0,0) {$q^{6k-10}\talpha$};
\node at (0,1) {$q^{6k-11}\tomega$};
\node at (2.5,0) {$q^{6k-8}\talpha$};
\node at (2.5,1) {$q^{6k-9}\tomega$};
\node at (5,-1) {$q^{6k-7}\tomega$};
\node at (5,0) {$q^{6k-6}\talpha$};
\node at (5,1) {$q^{6k-7}\tomega$};
\node at (7.5,-1) {$q^{6k-5}\tomega$};
\node at (7.5,0) {$q^{6k-6}\talpha$};
\node at (7.5,1) {$q^{6k-5}\tomega$};
\node at (10,0) {$q^{6k-4}\talpha$};
\node at (10,1) {$q^{6k-3}\tomega$};
\node at (12.5,0) {$q^{6k-2}\talpha$};
\node at (15,0) {$q^{6k}\talpha$};
\drawblack{0.7,0}{1.9,0}{}{d}
\drawblack{0.7,0.9}{1.9,0.1}{sloped}{-SD}
\drawblack{3.1,0.8}{4.4,-0.8}{sloped, near start}{D}
\drawblackw{3.1,0}{4.4,0}{near end}{c}
\drawblack{3.1,1}{4.4,1}{}{e}
\drawblack{5.6,1}{6.9,0.1}{sloped}{-S}
\drawblack{5.6,0.8}{6.9,-0.8}{sloped, near start}{-D}
\drawblackw{5.6,-0.9}{6.9,-0.1}{sloped,near start}{S}
\drawblack{5.6,-1}{6.9,-1}{}{e}
\drawblackw{5.6,-0.1}{6.9,-0.9}{sloped, near start}{S}
\drawblack{8.1,0}{9.4,0}{}{c}
\drawblack{8.1,1}{9.4,1}{}{e}
\drawblack{8.1,0.9}{9.4,0.1}{sloped}{S}
\drawblack{10.6,0.9}{11.9,0.1}{sloped}{-S}
\drawblack{10.6,0}{11.9,0}{}{d}
\drawblack{13.1,0}{14.4,0}{}{c}
\end{tikzpicture}
\]
and which starts with
\[
\begin{tikzpicture}
\node at (0,1) {$q^{-2k-1}\tomega$};
\node at (2.5,0) {$q^{-1}\tomega$};
\node at (2.5,1) {$q^{-2k+1}\tomega$};
\node at (5,0) {$\talpha$};
\node at (5,1) {$q^{-2k+3}\tomega$};
\node at (7.5,0) {$q^2\talpha$};
\node at (7.5,1) {$q^{-2k+5}\tomega$};
\node at (8.5,0.5) {$\cdots$};
\drawblack{0.7,0.9}{2,0.1}{sloped}{D^k}
\drawblack{0.7,1}{1.8,1}{}{e}
\drawblack{3,0}{4.6,0}{}{S}
\drawblack{3.2,0.9}{4.6,0.1}{sloped}{-SD^{k-1}}
\drawblack{5.4,0}{7,0}{}{c}
\drawblack{5.7,1}{6.8,1}{}{e}
\drawblack{5.7,0.9}{7,0.1}{sloped}{SD^{k-1}}
\end{tikzpicture}
\]
\end{proposition}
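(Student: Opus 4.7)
The approach is to apply the functor $C_L$ directly to the complex $\mathcal{C}_{k,-l}$ of Proposition \ref{prp:formomega4} and then perform a careful sequence of Gaussian eliminations, reusing the work from Section \ref{sec:toruslinks} as much as possible. The central observation is that under $C_L$ the cap and cup of $\alpha$ are joined into a single arc of $\tomega$, so dotting the cap equals dotting the cup; consequently $C_L(c) = 0$ and $C_L(d) = e$ on $\tomega$. Thus the top row of $\mathcal{C}_{k,-l}$ immediately becomes a sequence of objects $q^{6k-1-2j}\tomega$ with alternating morphisms $0$ and $e$, which is exactly the shape of the top row appearing in $\tcal{C}_{k,-l}$.

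First I would deal with the subcomplex of $\mathcal{C}_{k,-l}$ in homological degrees at most $4k$, which coincides with $\mathcal{B}_{3k}$. By Lemma \ref{lm:torus2ends}, $C_L(\mathcal{B}_{3k})$ is chain homotopy equivalent to $\tcal{B}_{3k}$ via an explicit sequence of Gaussian eliminations, and these eliminations are precisely the ones that produced $\tcal{B}$ from $C_L(\mathcal{B})$ in Section \ref{sec:toruslinks}. This accounts for the portion of $\tcal{C}_{k,-l}$ in the lowest homological degrees.

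Next I would fold in the top row. The $\id$-morphism from the top $q^{6k-1}\alpha$ to the $q^{6k-1}\alpha$ in $\mathcal{B}_{3k}$ becomes an $\id$-morphism between two copies of $q^{6k-1}\tomega$ after $C_L$, and Gaussian elimination removes this pair. The new morphisms created by this cancellation, together with the remaining top-to-main arrows $\pm S, \pm D, \pm \bar D$ given in Proposition \ref{prp:formomega4}, have to be pushed through the Gaussian eliminations of the previous paragraph. The book-keeping is guided by the following dictionary: after $C_L$ and delooping, a surgery $S\colon \alpha \to \gamma$ or $\alpha \to \delta$ becomes either a surgery $S\colon \tomega \to \talpha$ or the morphism $e\colon \tomega \to \tomega$ depending on which $\Phi_\pm$ is paired with it in the subsequent cancellation, and similarly the double surgeries $D, \bar D$ yield either $D\colon \tomega \to \tomega$ or $SD\colon \tomega \to \talpha$. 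The start of the complex when $l\geq 4k+1$ is then obtained from the extra morphism $SD^k\colon q^{-2k-1}\alpha \to \omega$ of Proposition \ref{prp:formomega4}, which under $C_L$ and delooping of $C_L(\omega) = q\tomega\oplus q^{-1}\tomega$ produces the two starting arrows labelled $D^k$ and $e$ in the stated complex; the small-$l$ boundary case follows by truncation of the same analysis.

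The main obstacle will be verifying the unusual \emph{middle column} of $\tcal{C}_{k,-l}$, which contains two parallel $q^{6k-7}\tomega$ objects linked through a $q^{6k-6}\talpha$ object to both $q^{6k-9}\tomega$ and $q^{6k-5}\tomega$ with the indicated signs. This column arises from the split $C_L(\beta) = q\talpha \oplus q^{-1}\talpha$ combined with the $-D$ and $-\bar D$ morphisms from the top row $q^{6k-7}\alpha$ into the main complex, and requires careful tracking of how the delooping isomorphisms $\Phi_\pm$ interact with the cancellations from Figure \ref{fig:last_step}. In essence, one of the two copies of $\talpha$ from delooping $C_L(\beta)$ is absorbed into the $\tcal{B}_{3k}$ simplification, while the other survives and becomes the connecting $q^{6k-6}\talpha$ in the middle column, and the signs in the $\pm D$, $\pm S$ morphisms between the two $\tomega$ rows follow from the explicit $\Phi_+, \Phi_-$ matrix entries together with the signs of $-D, -\bar D$ from Proposition \ref{prp:formomega4}.
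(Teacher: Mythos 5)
Your proposal matches the paper's approach: cancel the $\id$ between the two $q^{6k-1}\tomega$ objects, reuse the Gaussian eliminations that produced $\tcal{B}_{3k}$ from $C_L(\mathcal{B}_{3k})$ in Section~\ref{sec:toruslinks} (Lemma~\ref{lm:torus2ends}), and then verify how the top-row objects map into the result. The observation that $C_L(c)=0$ and $C_L(d)=e$ on $\alpha$-smoothings is exactly what makes the top row tractable, and this is implicit in the paper's one-line verification. One small inaccuracy in your explanation of the middle column: the $q^{6k-6}\talpha$ objects there are already present in $\tcal{B}_{3k}$ (they arise from the delooping done in Section~\ref{sec:toruslinks}, coming from $C_L(\beta)$ applied to objects of $\mathcal{B}_{3k}$ itself), so they are not produced by a separate delooping of $C_L(\beta)$ triggered by the top-row arrows; what the $-D$ and $-\bar{D}$ morphisms contribute is the connecting arrows into these pre-existing generators, not the generators themselves. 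Similarly, the ``dictionary'' claim that a surgery $S\colon\tomega\to\talpha$ can become $e\colon\tomega\to\tomega$ is imprecise as stated — such a transformation only arises through composition with cancellation morphisms, not from the surgery alone. Neither of these affects the correctness of the overall strategy.
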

The shown parts of $\tcal{C}_{k,-l}$ implicitly assume $k\geq 2$ and $l = 4k+1$. For $l < 4k+1$ one has to consider the subcomplex where in the top row the objects $q^{6k-2r+1}\tomega$ with $r > l$ are missing. For $l > 4k+1$ the top row will continue further to the left with objects of the form $u^{4k-r}q^{6k-2r+1}\tomega$ for $4k+2\leq r \leq l$ and morphisms $e$ starting at objects in odd homological degrees. Also, for $k=1$ one has to remove the objects $q^{6k-2j}\talpha$ for $j \geq 3$ and $q^{6k-5}\tomega$, and overlap the beginning part with the end part.

\begin{proof}
In addition to the Gaussian elimination that can be performed on the identity morphism between the $q^{6k-1}\alpha$ objects, we do the same Gaussian eliminations in $C_L(\mathcal{B}_{3k})$ that were done in Section \ref{sec:toruslinks}. It is straightforward to check that the objects $q^{6k-3-2j}\tomega$ map into $\tcal{B}_{3k}$ as claimed.
\end{proof}

\begin{theorem}\label{thm:decoomega4}
Let $k,l\geq 1$ and $w=(ab)^ka^{-l}$. Then there is a chain homotopy equivalence
\[
\CBN(L_w;\Z[h]) \simeq C_0\oplus C_1 \oplus C_2
\]
as cochain complexes over $\Z[h]$, such that $C_1$ consists only of direct summands of suitably shifted $A(1)$ complexes, $C_2$ consists only of direct summands of suitably shifted $A(2)$ complexes, and $C_0$ consists of two copies of $A$, suitably shifted, if $l$ is odd, and four copies if $l$ is even.
\end{theorem}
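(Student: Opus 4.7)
The plan is to apply the functor $G\colon \Cob(D^2_2)\to \mathfrak{Mod}^q_{\Z[X,h]/(X^2-Xh)}$ (close on the right, deloop, identify arcs with $A$) to the complex $\tcal{C}_{k,-l}$ of the previous proposition, and to read off the decomposition via a further sequence of Gaussian eliminations, in parallel with the proof of Theorem~\ref{thm:decoomega5}. Recall $G(\talpha)=A$, $G(\tomega)=qA\oplus q^{-1}A$, $G(c)=0$, and $G(d)=G(e)=2X-h$, so $G(\tcal{C}_{k,-l})$ is a cochain complex over $\mathfrak{Mod}^q_{\Z[X,h]/(X^2-Xh)}$ whose objects are all grading-shifted copies of $A$.

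First I would treat the torus part of the complex, namely the portion inherited from $G(\tcal{B}_{3k})$, which decomposes as in Proposition~\ref{prp:torusonestrand} and the proof of Theorem~\ref{thm:khovhomtorus} into one grading-shifted copy of $A$ together with a number of $A(1)$ and $A(2)$ summands; the Gaussian eliminations used there are unaffected by the presence of the extra $a^{-l}$-ladder. Next I would collapse the top row of $\tcal{C}_{k,-l}$: under $G$ it splits into two parallel ladders of $A$'s with morphisms alternating between $2X-h$ and $0$, so consecutive pairs connected by $2X-h$ peel off as $A(1)$ summands. Depending on the parity of $l$, either no or two unmatched $A$'s remain at the far left end of the ladder.

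The remaining and main step is the interface between the $a^{-l}$-ladder and the torus part. The boundary morphisms $G(D)$, $G(\bar D)$, $G(SD^i)$, $G(\bar DD^i)$ from Proposition~\ref{prp:formomega4} are polynomials in $X$ and $h$ which simplify modulo $X^2=Xh$; the key identity $(2X-h)^2=h^2$ means that any long iterated surgery $D^i$ reduces to $h^{2\lfloor i/2\rfloor}$ times either $1$ or $2X-h$. I would schedule the Gaussian eliminations so that the interface morphisms are used first to cancel pairs of objects internally in the ladder (producing further $A(1)$ summands), bringing the surviving interface morphisms down to $D$-exponent at most $2$; these either pair two unmatched top-row $A$'s with each other (creating an $A(1)$ or $A(2)$ summand) or match an unmatched $A$ with the singleton $A$ of the torus part. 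A short counting argument then yields exactly two copies of $A$ when $l$ is odd and four when $l$ is even. The main obstacle will be this scheduling: verifying that after all cancellations no $A(j)$ with $j\geq 3$ remains, which amounts to a careful compatibility check between the ladder simplification and the interface morphisms provided by Proposition~\ref{prp:formomega4}.
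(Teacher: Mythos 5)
Your broad plan---apply the functor $G$ to $\tcal{C}_{k,-l}$ and perform Gaussian eliminations, using $G(c)=0$---is the route the paper takes. However, there is a genuine gap in the key step. You claim that the identity $(2X-h)^2=h^2$ reduces the iterated surgery $D^i$ to a scalar $h^{2\lfloor i/2\rfloor}\cdot 1$ or $h^{2\lfloor i/2\rfloor}\cdot(2X-h)$. This would be valid if $G(D)$ were multiplication by $2X-h$ on a single copy of $A$, but the relevant $D$ is a morphism $\tomega\to q^2\tomega$, and since $G(\tomega)=qA\oplus q^{-1}A$ the map $G(D^i)$ is a $2\times 2$ matrix, not a scalar. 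The paper computes it explicitly: $G(D^i)$ has diagonal entries $(X-h)^i$ and $X^i$ and a lower-left entry $(2X-h)^{i-1}$. The decomposition then rests not on $(2X-h)^2=h^2$ but on the divisibility facts $(2X-h)^i=(2X-h)(2X-h)^{i-1}$ and $X^{i+1}=(2X-h)X^i$ (the latter from $X^2=Xh=(2X-h)X$), which allow the cross morphisms in Figures \ref{fig:twoxminush} and \ref{fig:htworeduced} to be removed by changes of basis for $i\geq 1$. Without this divisibility argument, the scheduling you propose does not rule out $A(j)$ with $j\geq 3$---precisely the obstacle you flag as unresolved at the end.

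There is also a structural mismatch. The paper does not treat a ``torus part,'' a ``ladder,'' and an ``interface'' sequentially. Since $G(c)=0$, after applying $G$ the complex immediately falls apart into small independent blocks, namely (\ref{eq:startdiag}), (\ref{eq:easydiag}), and (\ref{eq:diffdiag}), each of which already mixes a piece of the top row, a piece of the bottom row, and the interface morphisms joining them; these blocks are then decomposed one at a time. This automatic splitting sidesteps the compatibility check between ladder simplification and interface morphisms that you identify as the main obstacle, and it also makes the parity-of-$l$ count (two versus four copies of $A$) a straightforward case analysis at the left end of the diagram (\ref{eq:startdiag}) rather than a global counting argument.
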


\begin{proof}
Let $G\colon \Cob(D^2_2)\to\mathfrak{Mod}^q_{\Z[X,h]/(x^-Xh)}$ be the functor obtained by composing $C_R$ with the identification functor. Then $G(c)=0$, and it remains to analyze the image under $G$ of the following three diagrams.
\begin{equation}
\label{eq:startdiag}
\begin{tikzpicture}[baseline={([yshift=-.5ex]current bounding box.center)}]
\node at (0,1) {$q^{-2k-1}\tomega$};
\node at (3,1) {$q^{-2k+1}\tomega$};
\node at (3,0) {$q^{-1}\tomega$};
\node at (6,0) {$\talpha$};
\drawblack{0.7,1}{2.2,1}{}{e}
\drawblack{0.7,0.9}{2.5,0}{sloped}{D^k}
\drawblack{3.8,1}{5.7,0.1}{sloped}{-SD^{k-1}}
\drawblack{3.5,0}{5.7,0}{}{S}
\end{tikzpicture}
\end{equation}

\begin{equation}
\label{eq:easydiag}
\begin{tikzpicture}[baseline={([yshift=-.5ex]current bounding box.center)}]
\node at (0,1) {$q^{6k-5-8i}\tomega$};
\node at (3,1) {$q^{6k-3-8i}\tomega$};
\node at (3,0) {$q^{6k-4-6i}\talpha$};
\node at (6,0) {$q^{6k-2-6i}\talpha$};
\drawblack{0.9,1}{2.1,1}{}{e}
\drawblack{0.9,0.9}{2.1,0.1}{sloped}{SD^i}
\drawblack{3.9,0.9}{5.1,0.1}{sloped}{-SD^i}
\drawblack{3.9,0}{5.1,0}{}{d}
\end{tikzpicture}
\end{equation}
for $i=0,\ldots,k-1$, and
\begin{equation}
\label{eq:diffdiag}
\begin{tikzpicture}[baseline={([yshift=-.5ex]current bounding box.center)}]
\node at (0,1) {$q^{6k-9-8i}\tomega$};
\node at (3,1) {$q^{6k-7-8i}\tomega$};
\node at (3,0) {$q^{6k-6-6i}\talpha$};
\node at (3,-1) {$q^{6k-7-6i}\tomega$};
\node at (6,0) {$q^{6k-6-6j}\talpha$};
\node at (6,-1) {$q^{6k-5-6i}\tomega$};
\drawblack{0.9,1}{2.1,1}{}{e}
\drawblack{0.9,0.9}{2.1,-0.9}{sloped}{D^{i+1}}
\drawblack{3.9,0.9}{5.1,0.1}{sloped}{-SD^i}
\drawblack{3.9,0.8}{5.1,-0.8}{sloped}{-D^{i+1}}
\drawblackw{3.9,-0.9}{5.1,-0.1}{sloped, near start}{S}
\drawblackw{3.9,0}{5.1,-0.9}{sloped, near start}{S}
\drawblack{3.9,-1}{5.1,-1}{}{e}
\end{tikzpicture}
\end{equation}
for $i=0,\ldots,k-2$.

First observe that applying $G$ to $\tomega\stackrel{D^i}{\longrightarrow} q^{2i}\tomega$ with $i\geq 1$ results in
\[
\begin{tikzpicture}
\node at (0,1.5) {$q A$};
\node at (0,0) {$q^{-1} A$};
\node at (4,1.5) {$q^{2i+1}A$};
\node at (4,0) {$q^{2i-1}A$};
\drawblack{0.5,1.5}{3.3,1.5}{}{(X-h)^i}
\drawblack{0.5,1.4}{3.3,0.1}{sloped}{(2X-h)^{i-1}}
\drawblack{0.5,0}{3.3,0}{}{X^i}
\end{tikzpicture}
\]
which can be checked by induction, using that $(X-h)^i+X^i = (2X-h)^i$. So applying $G$ to (\ref{eq:startdiag}) leads to
\[
\begin{tikzpicture}
\node at (0,3) {$q^{-2k}A$};
\node at (0,2) {$q^{-2k-2}A$};
\node at (4,3) {$q^{-2k+2}A$};
\node at (4,2) {$q^{-2k}A$};
\node at (4,1) {$A$};
\node at (4,0) {$q^{-2}A$};
\node at (8,0.5) {$A$};
\drawblack{0.6,3}{3.2,3}{}{2X-h}
\drawblack{0.7,2}{3.35,2}{near end}{2X-h}
\drawblackw{0.6,2.9}{3.7,1.1}{sloped, very near end}{(X-h)^k}
\drawblackw{0.6,2.7}{3.4,0.2}{sloped, near end}{(2X-h)^{k-1}}
\drawblack{0.7,1.9}{3.4,0}{sloped}{X^k}
\drawblack{4.6,0}{7.7,0.4}{sloped}{X}
\drawblack{4.3,1}{7.7,0.5}{sloped}{\id}
\drawblack{4.7,2}{7.7,0.6}{sloped}{-X^k}
\drawblack{4.8,3}{7.7,0.7}{sloped}{-(2X-h)^{k-1}}
\end{tikzpicture}
\]
After Gaussian elimination of the identity morphism on the right this turns into
\[
\begin{tikzpicture}
\node at (0,2) {$q^{-2k}A$};
\node at (0,0) {$q^{2k-2}A$};
\node at (3.5,2) {$q^{-2k+2}A$};
\node at (3.5,1) {$q^{-2}A$};
\node at (3.5,0) {$q^{-2k}A$};
\drawblack{0.6,2}{2.7,2}{}{2X-h}
\drawblack{0.6,1.9}{2.9,1.1}{sloped}{(2X-h)^{k-1}}
\drawblack{0.7,0}{2.8,0}{}{2X-h}
\drawblack{0.7,0.1}{2.9,0.9}{sloped}{X^k}
\end{tikzpicture}
\]
If $k=1$ we can do another Gaussian elimination along the $(2X-h)^0=\id$, resulting in one copy of $A$ and one copy of $A(1)$. If $k\geq 2$ we can remove the $X^k$ morphism and the $(2X-h)^{k-1}$ morphism with a change of basis, resulting in one copy of $q^{-2}A$ and two copies of $A(1)$ (shifted by $q^{-2k}$ and $q^{-2k-2}$).

Depending on $l$, we may only have one or no object in the top row of (\ref{eq:startdiag}). In this case we can still perform the first cancellation of $\id$, and the complex will have the required form.

Applying $G$ to (\ref{eq:easydiag}) is slightly simpler, we get the diagram in Figure \ref{fig:twoxminush}.
\begin{figure}[ht]
\begin{tikzpicture}
\node at (0,2) {$q^{6k-4-8i}A$};
\node at (0,1) {$q^{6k-6-8i}A$};
\node at (4,2) {$q^{6k-2-8i}A$};
\node at (4,1) {$q^{6k-4-8i}A$};
\node at (4,-0.5) {$q^{6k-4-6i}A$};
\node at (8,-0.5) {$q^{6k-2-6i}A$};
\drawblack{0.9,1.8}{3.1,-0.3}{sloped, near end}{(2X-h)^i}
\drawblack{0.9,2}{3.1,2}{}{2X-h}
\drawblackw{0.9,1}{3.1,1}{near end}{2X-h}
\drawblack{0.9,0.9}{3.1,-0.4}{sloped}{X^{i+1}}
\drawblack{4.9,1.9}{7.1,-0.3}{sloped}{-(2X-h)^i}
\drawblack{4.9,0.9}{7.1,-0.4}{sloped}{-X^{i+1}}
\drawblack{4.9,-0.5}{7.1,-0.5}{}{2X-h}
\end{tikzpicture}
\caption{\label{fig:twoxminush}The diagram after applying $G$ to (\ref{eq:easydiag}).}
\end{figure}

If $i=0$, we can perform two Gaussian eliminations, leaving us with one summand of a complex $A(1)$ (suitably shifted). Otherwise we get three copies of $A(1)$ after changing the basis. Again, depending on $l$ there may only be one or no objects in the top row of (\ref{eq:easydiag}), in which case a similar argument applies.

Applying $G$ to (\ref{eq:diffdiag}) is slightly more involved; we get the diagram in Figure \ref{fig:htwodiag}.
\begin{figure}[ht]
\begin{tikzpicture}
\node at (0,4.5) {$q^{6k-8-8i}A$};
\node at (0,3.5) {$q^{6k-10-8i}A$};
\node at (5,0) {$q^{6k-8-6i}A$};
\node at (5,1) {$q^{6k-6-6i}A$};
\node at (5,2) {$q^{6k-6-6i}A$};
\node at (5,3.5) {$q^{6k-8-8i}A$};
\node at (5,4.5) {$q^{6k-6-8i}A$};
\node at (10,0) {$q^{6k-6-6i}A$};
\node at (10,1) {$q^{6k-4-6i}A$};
\node at (10,2) {$q^{6k-6-6i}A$};
\drawblack{0.9,4.5}{4.1,4.5}{}{2X-h}
\drawblack{0.9,4.3}{4.1,1.2}{sloped, near end}{(X-h)^{i+1}}
\drawblack{0.9,4.1}{4.1,0.2}{sloped, near end}{(2X-h)^i}
\drawblackw{0.9,3.5}{4.1,3.5}{near end}{2X-h}
\drawblack{0.9,3.3}{4.1,0}{sloped, below}{X^{i+1}}
\drawblack{5.9,4.5}{9.1,2.3}{sloped}{-(2X-h)^i}
\drawblack{5.9,4.3}{9.1,1.4}{sloped}{-(X-h)^{i+1}}
\drawblack{5.9,4.1}{9.1,0.4}{sloped, below}{(2X-h)^i}
\drawblackw{5.9,3.4}{9.1,2.1}{sloped, very near start}{-X^{i+1}}
\drawblack{5.9,3.2}{9.1,0.2}{sloped, near start, below}{-X^{i+1}}
\drawblackw{5.9,2}{9.1,1.2}{sloped, near start}{X-h}
\drawblackw{5.9,0.2}{9.1,1.7}{sloped, near start, below}{X}
\drawblack{5.9,0}{9.1,0}{}{2X-h}
\drawblackw{5.9,1}{9.1,1}{very near start, below}{2X-h}
\drawblackw{5.9,1.1}{9.1,1.9}{sloped, very near start}{\id}
\drawblackw{5.9,1.8}{9.1,0.1}{sloped, near start}{\id}
\end{tikzpicture}
\caption{\label{fig:htwodiag}The diagram after applying $G$ to (\ref{eq:diffdiag}).}
\end{figure}

Gaussian elimination on the two $\id$-morphisms as in Section \ref{sec:toruslinks} leads to the diagram in Figure \ref{fig:htworeduced}.
\begin{figure}[ht]
\begin{tikzpicture}
\node at (0,2.5) {$q^{6k-8-8i}A$};
\node at (0,1.5) {$q^{6k-10-8i}A$};
\node at (4,0) {$q^{6k-8-6i}A$};
\node at (4,1.5) {$q^{6k-8-8i}A$};
\node at (4,2.5) {$q^{6k-6-8i}A$};
\node at (8,0) {$q^{6k-4-6i}A$}; 
\drawblack{0.9,2.5}{3.1,2.5}{}{2X-h}
\drawblack{0.9,2.3}{3.1,0.2}{sloped, near end}{(2X-h)^i}
\drawblackw{0.9,1.5}{3.1,1.5}{near end}{2X-h}
\drawblack{0.9,1.3}{3.1,0}{sloped}{X^{i+1}}
\drawblack{4.9,2.4}{7.1,0.2}{sloped}{(2X-h)^{i+1}}
\drawblack{4.9,1.4}{7.1,0.1}{sloped}{X^{i+2}}
\drawblack{4.9,0}{7.1,0}{}{-h^2}
\end{tikzpicture}
\caption{\label{fig:htworeduced}Figure \ref{fig:htwodiag} after two Gaussian eliminations.}
\end{figure}
If $i=0$ we can perform one more Gaussian elimination, and after a change of basis we get two copies of $A(1)$, suitably shifted. If $l\leq 3$, we do not have the four objects in the upper rows, and we simply get one summand $A(2)$. For $l = 4$ the two objects in the left corner are missing, and we get one copy of $A(1)$ and two copies of $A$.

If $i\geq 1$, we can remove the diagonal morphisms with change of bases, and we get two summands of $A(1)$ and one summand of $A(2)$. Again, if only one or no object is in the top row of (\ref{eq:diffdiag}), we get a copy of $A(2)$, and possibly two copies of $A$.
\end{proof}

\section{The Bar-Natan complex for proper alternating words}

Recall that a braid word is called proper alternating, if it is of the form 
\[
w=a^{-n_1}b^{m_1}\cdots a^{-n_j}b^{m_j}
\]
 with $j\geq 1$ and $n_i,m_i\geq 1$ for $i=1,\ldots,j$. We define
 \[
 n(w) = \sum_{i=1}^j n_j \hspace{1cm}\mbox{and}\hspace{1cm}
 m(w) = \sum_{i=1}^j m_j.
 \]
 In the special case $j=1$ the tangle complex $\CBN(T_{a^{-n}b^m};\Z[h])$ is chain homotopy equivalent to $q^{m-n}\mathcal{D}_{a^{-n}}\otimes \mathcal{D}_{b^m}$. In Figure \ref{fig:altcomplex} we show a special case of such a complex.
 \begin{figure}[ht]
 \begin{tikzpicture}
 \node at (0,3) {$q^{-6}\alpha$};
 \node at (2,2) {$q^{-4}\alpha$};
 \node at (2,4) {$q^{-5}\gamma$};
 \node at (4,1) {$q^{-2}\alpha$};
 \node at (4,3) {$q^{-3}\gamma$};
 \node at (4,5) {$q^{-3}\gamma$};
 \node at (6,0) {$q^{-1}\omega$};
 \node at (6,2) {$q^{-1}\gamma$};
 \node at (6,4) {$q^{-1}\gamma$};
 \node at (8,1) {$\beta$};
 \node at (8,3) {$q \gamma$};
 \node at (10,2) {$q^2\beta$};
 \drawblack{0.5,3.25}{1.5,3.75}{sloped}{-S}
 \drawblack{0.5,2.75}{1.5,2.25}{sloped}{d}
 \drawblack{2.5,4.25}{3.5,4.75}{sloped}{-c}
 \drawblack{2.5,3.75}{3.5,3.25}{sloped}{d}
 \drawblack{2.5,2.25}{3.5,2.75}{sloped}{S}
 \drawblack{2.5,1.75}{3.5,1.25}{sloped}{c}
 \drawblack{4.5,4.75}{5.5,4.25}{sloped}{d}
 \drawblack{4.5,3.25}{5.5,3.75}{sloped}{c}
 \drawblack{4.5,2.75}{5.5,2.25}{sloped}{c}
 \drawblack{4.5,1.25}{5.5,1.75}{sloped}{-S}
 \drawblack{4.5,0.75}{5.5,0.25}{sloped}{S}
 \drawblack{6.5,3.75}{7.5,3.25}{sloped}{c}
 \drawblack{6.5,2.25}{7.5,2.75}{sloped}{-c}
 \drawblack{6.5,1.75}{7.5,1.25}{sloped}{S}
 \drawblack{6.5,0.25}{7.5,0.75}{sloped}{S}
 \drawblack{8.5,2.75}{9.5,2.25}{sloped}{S}
 \drawblack{8.5,1.25}{9.5,1.75}{sloped}{c}
 \end{tikzpicture}
 \caption{\label{fig:altcomplex}The complex $\CBN(T_{a^{-3}b^2};\Z[h])$ over $\Cob(D^3_3)$.}
 \end{figure}
 
For $j\geq 2$ we can describe the tangle complex for $T_w$ by continuing to tensor with $q^{m_i-n_i}\mathcal{D}_{a^{-n_i}}\otimes \mathcal{D}_{b^{m_i}}$. However, it does not seem clear to us that this leads to a nice formula from which the Khovanov homology can be read off. Nevertheless, we can show that there is a cochain complex chain homotopy equivalent to $\CBN(T_w;\Z[h])$ for any proper alternating word which is nice enough for our purposes.

\begin{proposition}\label{prp:niceproperalt}
Let $w$ be a proper alternating word. Then $\CBN(T_w;\Z[h])$ is chain homotopy equivalent to a cochain complex $\mathcal{C}_w$ over $\Cob(D^3_3)$ satisfying the following properties.
\begin{enumerate}
\item All generators have smoothings without loops.
\item Morphisms are of the form $q^j\varphi_1\to q^{j+i}\varphi_2$ with $i\in \{0,1,2\}$. Furthermore,
\begin{enumerate}
\item if $i=0$, then $\varphi_1=\varphi_2$ and the morphism is a multiple of $\id$.
\item if $i=1$, the morphism is $\pm S$, with $S$ a surgery.
\item if $i=2$, the morphism is a linear combination of dottings and $h$.
\end{enumerate}
\item There is a unique generator whose smoothing is $\omega$, and it is in bidegree $(0,m(w)-n(w))$.
\item There is a unique morphism with domain $q^{m(w)-n(w)}\omega$, and it is a surgery to a generator with smoothing $\beta$.
\item There is a finitely generated free subcomplex $\tcal{C}_w$ which fits into a short exact sequence
\[
0\longrightarrow \tcal{C}_w\longrightarrow \mathcal{C}_w\longrightarrow q^{m(w)-n(w)}\mathcal{D}_{a^{-n(w)}}\longrightarrow 0.
\]
\end{enumerate}
\end{proposition}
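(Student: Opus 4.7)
The plan is to induct on the number of syllables $j$. For the base case $j=1$, so $w=a^{-n}b^m$ and $\CBN(T_w;\Z[h])\simeq q^{m-n}\mathcal{D}_{a^{-n}}\otimes \mathcal{D}_{b^m}$, I would lay out this rectangular grid of tensor products explicitly (as illustrated in Figure \ref{fig:altcomplex}) and show that after stacking smoothings and delooping all interior loops, every generator has a smoothing from $\{\omega,\alpha,\beta,\gamma,\delta\}$ and the morphisms arising are exactly plain saddles from the factor complexes plus the $c$ and $d$ morphisms that appear when the neck-cutting relation \eqref{eq:localrelb} is applied at a delooped loop; this gives properties~(1) and~(2). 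The unique $\omega$ is the corner generator $\omega\otimes\omega$ in bidegree $(0,m-n)$, with a unique outgoing saddle $\omega\to\beta$ coming from $\mathcal{D}_{b^m}$ since $\mathcal{D}_{a^{-n}}$ terminates at $\omega$, giving~(3) and~(4). Taking $\tcal{C}_w$ to be the subcomplex consisting of grid points with at least one $\beta$-factor leaves $q^{m-n}\mathcal{D}_{a^{-n}}$ as the quotient top row, giving~(5).

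For the induction step, suppose $\mathcal{C}_w$ exists as described and let $w'=w\cdot a^{-n}b^m$. Then
\[
\CBN(T_{w'};\Z[h])\simeq \mathcal{C}_w\otimes q^{m-n}\mathcal{D}_{a^{-n}}\otimes\mathcal{D}_{b^m}.
\]
Property~(5) of $\mathcal{C}_w$ provides a short exact sequence whose quotient is
\[
q^{m(w)-n(w)}\mathcal{D}_{a^{-n(w)}}\otimes q^{m-n}\mathcal{D}_{a^{-n}}\otimes\mathcal{D}_{b^m}.
\]
The tensor $\mathcal{D}_{a^{-n(w)}}\otimes\mathcal{D}_{a^{-n}}$ corresponds to the tangle $a^{-n(w')}$, so after delooping and Gaussian elimination on its interior grid (exactly as in Proposition~\ref{prp:toruslink2}) it simplifies to $\mathcal{D}_{a^{-n(w')}}$. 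The overall quotient then takes the shape of the base-case complex for the single-syllable word $a^{-n(w')}b^m$, and I would construct $\mathcal{C}_{w'}$ by designating its top row $q^{m(w')-n(w')}\mathcal{D}_{a^{-n(w')}}$ as the new quotient, while collecting everything else---the remaining generators of the simplified quotient together with the simplified $\tcal{C}_w\otimes q^{m-n}\mathcal{D}_{a^{-n}}\otimes\mathcal{D}_{b^m}$---into $\tcal{C}_{w'}$.

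The main obstacle will be verifying properties~(2) and~(4) after all these Gaussian eliminations. For~(4), the only generator that can carry the smoothing $\omega$ in $\Cob(D^3_3)$ is $\omega_w\otimes\omega\otimes\omega$, and it has two outgoing arrows before simplification: one $\omega_w\to\beta_w$ inherited from $\mathcal{C}_w$, landing in $\tcal{C}_w\otimes\omega\otimes\omega\subset\tcal{C}_{w'}$, and one $\omega\to\beta$ coming from the first step of $\mathcal{D}_{b^m}$, landing in the new quotient backbone. Since every Gaussian elimination occurs strictly within $\tcal{C}_{w'}$ and none involves $\omega$, the latter arrow survives unchanged as the unique outgoing morphism prescribed by~(4), while the former is absorbed into $\tcal{C}_{w'}$. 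For property~(2), the bookkeeping of how saddles and dottings compose through the Gaussian eliminations---mirroring the analysis of Sections~\ref{sec:toruslinks}, \ref{sec:omega_five}, and~\ref{sec:omega_four}, and especially the argument of Proposition~\ref{prp:formomega4}---shows that only morphisms of the form $\pm\id$, $\pm S$, and $\Z[h]$-linear combinations of dottings and $h$ arise, since the neck-cutting relation converts any composite traversing a delooped loop into such a combination.
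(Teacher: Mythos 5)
Your induction is organized by syllable---tensoring $\mathcal{C}_w$ with $q^{m-n}\mathcal{D}_{a^{-n}}\otimes\mathcal{D}_{b^m}$ all at once---whereas the paper inducts on word length, appending a single letter $a^{-1}$ or $b$ per step. The one-letter step is what keeps the delooping and cancellation bookkeeping small enough to check directly; your coarser step is a legitimate alternative in principle, and your base case $a^{-n}b^m$ agrees with the paper's, but the inductive step as written has a gap.

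The gap is in the verification of property~(4). You correctly note that the unique $\omega$-generator $\omega_w\otimes\omega\otimes\omega$ has two outgoing differentials before simplification: $\omega_w\to\beta_w$ inherited from $\mathcal{C}_w$, and $\omega\to q\beta$ from the first step of $\mathcal{D}_{b^m}$. You then assert that the latter ``survives unchanged as the unique outgoing morphism \ldots\ while the former is absorbed into $\tcal{C}_{w'}$.'' But a target sitting inside the subcomplex $\tcal{C}_{w'}$ does not remove that arrow as a boundary component of $\omega$; if no cancellation is performed, both arrows remain and (4) fails. (Also, the claim that the second arrow ``lands in the new quotient backbone'' is wrong: the backbone $q^{m(w')-n(w')}\mathcal{D}_{a^{-n(w')}}$ has only $\alpha$- and $\omega$-smoothings, and that arrow's target is $\beta$-smoothed and so lies in $\tcal{C}_{w'}$.) In the paper's one-letter step $\mathcal{C}_w\otimes\CBN(T_b;\Z[h])$ this is resolved by a specific Gaussian elimination: after delooping $\beta_w\otimes q^2\beta$, the surgery $\omega_w\to\beta_w$ tensored with $q^2\beta$ acquires an $\id$-component, and cancelling that pair deletes $\omega_w\otimes q^2\beta$ together with the duplicate arrow out of $\omega$, after which one checks (1)--(3) and (5) are unaffected. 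Your argument needs the analogous cancellation and this check, and in your syllable step it interacts with all the other simultaneous deloopings and eliminations happening in $\tcal{C}_w\otimes\mathcal{D}_{a^{-n}}\otimes\mathcal{D}_{b^m}$; the control you invoke from Sections~\ref{sec:toruslinks} and~\ref{sec:omega_four} is not automatic here and would have to be established.
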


\begin{proof}
We are going to show this for any words $w$ that start in $a^{-1}$ and contain at least one $b$. The induction start is for words of the form $a^{-n}b^m$ and we can see from Figure \ref{fig:altcomplex} that the statements hold in this case.

Now let $w$ be a word starting in $a^{-1}$ and containing a $b$ for which the cochain complex $\mathcal{C}_w$ with properties (1)-(5) exists. We need to show that the complexes $\mathcal{C}_{wa^{-1}}$ and $\mathcal{C}_{wb}$ with (1)-(5) exist.

We have
\[
\CBN(T_{wa^{-1}};\Z[h])\simeq \mathcal{C}_w\otimes \CBN(T_{a^{-1}};\Z[h]),
\]
so we need to analyze the tensor product. The complex on the right consists of a surgery $q^{-2}\alpha \to q^{-1}\omega$, and tensoring with $\omega$ does not change the smoothings. Tensoring with $\alpha$ can create a loop, so we first need to deloop any objects of the form $\alpha\otimes \alpha$ and $\delta\otimes \alpha$. We need to check that after delooping all morphisms are of the form as in (2). If the $q$-grading changes by $0$ or $2$, domain and codomain smoothings are the same, and the morphism keeps the same form, unless it consists of dotting a circle that has been delooped. If we denote the smoothing by $\chi$, we then get two objects $q\chi$ and $q^{-1}\chi$ for the domains, and $q^3\chi$ and $q\chi$ for the codomain. The morphism arising from dotting the circle leads to an $\id$ morphism between the $q\chi$ objects.

A surgery in $\mathcal{C}_w$ remains a surgery, if the surgery does not involve a delooped circle, and if it involves a circle, it turns into an $\id$ morphism and a linear combination of a dotting and possibly an $h$ after delooping. Similarly, the surgery in $\CBN(T_{a^{-1}};\Z[h])$ remains a surgery, if there is no delooping, and produces an $\id$ morphism as well as a linear combination of a dotting and an $h$.

The only way we can get a smoothing $\omega$ is through the tensor $q^{m(w)-n(w)}\omega\otimes q^{-1}\omega = q^{m(wa^{-1})-n(wa^{-1})}\omega$, so (3) is satisfied. There is also still one morphism going out of this object, and it remains a surgery to a $\beta$ smoothing.

To satisfy (5) we need to perform several Gaussian eliminations. These Gaussian eliminations will only involve objects in $\mathcal{C}_w\otimes q^{-2}\alpha$, in fact in the quotient $q^{m(w)}\mathcal{D}_{a^{-n(w)}}$. The tensor complex looks as in
\[
\begin{tikzpicture}
\node at (0,2.5) {$q^{t}\alpha$};
\node at (0,1.5) {$q^{t-2}\alpha$};
\node at (2,0) {$q^{t}\alpha$};
\node at (2,2.5) {$q^{t+2}\alpha$};
\node at (2,1.5) {$q^{t}\alpha$};
\node at (4,0) {$q^{t+2}\alpha$};
\node at (4,2.5) {$q^{t+4}\alpha$};
\node at (4,1.5) {$q^{t+2}\alpha$};
\node at (6,0) {$q^{t+4}\alpha$};
\node at (5,2) {$\cdots$};
\node at (7,0) {$\cdots$};
\node at (6,2.5) {$q^{s-2}\alpha$};
\node at (6,1.5) {$q^{s-4}\alpha$};
\node at (8,0) {$q^{s-2}\alpha$};
\node at (8,2) {$q^{s-2}\alpha$};
\node at (10,0) {$q^{s-1}\omega$};
\drawgray{0.5,1.6}{0.9,1.8}
\drawgray{0.5,2.6}{0.9,2.8}
\drawgray{2.5,1.6}{2.9,1.8}
\drawgray{2.5,2.6}{2.9,2.8}
\drawgray{4.5,1.6}{4.9,1.8}
\drawgray{4.5,2.6}{4.9,2.8}
\drawgray{6.5,1.6}{6.9,1.8}
\drawgray{6.5,2.6}{6.9,2.8}
\drawgray{2.5,0.1}{2.9,0.3}
\drawgray{4.5,0.1}{4.9,0.3}
\drawgray{6.5,0.1}{6.9,0.3}
\drawgray{8.5,0.1}{8.9,0.3}
\drawgray{8.5,2.1}{8.9,2.3}
\drawgray{10.5,0.1}{10.9,0.3}
\drawblack{0.5,1.4}{1.5,0.1}{}{}
\drawblack{0.5,2.3}{1.5,0.2}{}{}
\drawblackw{0.5,1.5}{1.5,1.5}{}{}
\drawblack{0.5,2.4}{1.5,1.6}{sloped}{\pm\id}
\drawblack{0.5,2.5}{1.5,2.5}{}{}
\drawblack{2.5,1.4}{3.5,0.1}{}{}
\drawblack{2.5,2.3}{3.5,0.2}{}{}
\drawblackw{2.5,1.5}{3.5,1.5}{}{}
\drawblack{2.5,2.4}{3.5,1.6}{sloped}{\mp\id}
\drawblack{2.5,2.5}{3.5,2.5}{}{}
\drawblack{2.5,0}{3.5,0}{}{}
\drawblack{4.5,0}{5.5,0}{}{}
\drawblack{4.5,1.4}{5.5,0.1}{}{}
\drawblack{4.5,2.3}{5.5,0.2}{}{}
\drawblack{6.5,2.3}{7.5,0.2}{}{}
\drawblack{6.5,1.4}{7.5,0.1}{}{}
\drawblackw{6.5,1.5}{7.5,1.9}{}{}
\drawblack{6.5,2.5}{7.5,2.1}{sloped}{\id}
\drawblack{8.5,0}{9.5,0}{}{}
\drawblack{8.5,1.9}{9.5,0.1}{}{}
\end{tikzpicture}
\]
Here $t=1+m(w)-3n(w)$ and $s = m(w)-n(w)$. The black morphisms are a mirrored version of the morphisms used in Proposition \ref{prp:toruslink2}, and after Gaussian elimination of the shown $\id$-morphisms we get condition (5) with $\mathcal{D}_{a^{-n(w)-1}}$. Note that there are no further morphisms going into the codomains of the $\id$-morphisms apart from the ones shown, so we do not get any further complicated morphisms and the properties (1)-(4) still hold for the resulting complex.

The argument for $\mathcal{C}_w\otimes \CBN(T_b;\Z[h])$ is similar.  Indeed, conditions (1)-(3) and (5) hold after delooping. The main difference is that now we have two morphisms going out of the object $q^{m(w)-n(w)+1}\omega$, both surgeries into objects $q^{m(w)-n(w)+2}\beta$. But one of the latter objects can be cancelled via an $\id$-morphism in $\mathcal{C}_w\otimes q^2\beta$ that used to be the surgery from $\omega$ to $\beta$. Indeed, this is the only cancellation we need to do, and it does not affect any of the properties (1)-(5).
\end{proof}

\begin{proposition}\label{prp:altBarNatan}
Let $w$ be a proper alternating word, and $L_w$ the braid closure of $w$ with basepoint on the middle strand. Then there is a finitely generated free $A$-cochain complex $C_w$ with
\[
\CBN(L_w;\Z[h]) \simeq C_w \oplus q^{m(w)-n(w)}A
\]
as $A$-cochain complexes, and such that there is a short exact sequence of finitely generated free $A$-cochain complexes
\[
0\longrightarrow \tilde{C}_w\longrightarrow C_w\longrightarrow S_w\longrightarrow 0
\]
with
\[
S_w\cong \bigoplus_{i=1}^{(n(w)-1)/2} (u^{-1-2i} q^{m(w)-n(w)-4i} A(1)\oplus u^{-1-2i} q^{m(w)-n(w)-4i} A(1))
\]
if $n(w)$ is odd, and
\begin{align*}
S_w\cong &\, u^{-n(w)}q^{m(w)-3n(w)}A \oplus u^{-n(w)}q^{2+m(w)-3n(w)}A \oplus \\
& \bigoplus_{i=1}^{(n(w)-2)/2} (u^{-1-2i} q^{m(w)-n(w)-4i} A(1)\oplus u^{-1-2i} q^{m(w)-n(w)-4i} A(1))
\end{align*}
if $n(w)$ is even.
\end{proposition}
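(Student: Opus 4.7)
The plan is to apply Proposition~\ref{prp:niceproperalt} and then close the 3-tangle $T_w$ to a link via the composite functor $G := C_R \circ C_L \colon \Cob(D^3_3) \to \mathfrak{Mod}^q_{\Z[X,h]/(X^2-Xh)}$, which closes both outer strands of the braid and leaves the middle strand (carrying the basepoint) as the surviving arc identified with $A$. Since $G$ is additive, it sends the short exact sequence of Proposition~\ref{prp:niceproperalt}(5) to a short exact sequence of finitely generated free $A$-cochain complexes
\[
0 \longrightarrow G(\tcal{C}_w) \longrightarrow G(\mathcal{C}_w) \longrightarrow q^{m(w)-n(w)} G(\mathcal{D}_{a^{-n(w)}}) \longrightarrow 0,
\]
with $G(\mathcal{C}_w) \simeq \CBN(L_w;\Z[h])$ as $A$-complexes.

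I next compute $G(\mathcal{D}_{a^{-n}})$. Under $G$ each $\alpha$-smoothing becomes an arc plus one disjoint circle (delooping to $qA \oplus q^{-1}A$) and $\omega$ becomes an arc plus two disjoint circles (delooping to $q^2A \oplus 2A \oplus q^{-2}A$). The alternating dotting differentials close to either $0$ (for differentials of type $\text{dot cap}-\text{dot cup}$, since the cap and cup of $\alpha$ lie on the same closed arc) or to multiplication by $2X-h$ on each summand (for type $\text{dot cap}+\text{dot cup}-h$). The final surgery $S\colon q^{-1}\alpha \to \omega$ closes to a split map whose matrix, computed from $\Delta(1) = X\otimes 1 + 1 \otimes X - h \otimes 1$ composed with the delooping isomorphism, contains two identity entries; Gaussian elimination (Lemma~\ref{lm:gausselim}) on them removes the degree $-1$ object and two of the four degree $0$ summands. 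The $(2X-h)$-pairings at consecutive degrees then contribute $A(1)$-summands, while if $n(w)$ is even two unpaired $A$-summands remain at the top degree $-n(w)$, the leading differential there being of the vanishing type.

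Third, I exploit the extension data. By Proposition~\ref{prp:niceproperalt}(3)--(4), the unique $\omega$-generator at bidegree $(0,m(w)-n(w))$ has a unique outgoing surgery to a $\beta$-generator inside $\tcal{C}_w$, and under $G$ this surgery becomes a merge-cobordism on the middle arc which (by the same Frobenius computation as for $S$) has an identity entry matching one of the two surviving degree-$0$ summands of the quotient against a summand of $G(\beta)$ sitting in $\tcal{C}_w$ at degree $1$. One further Gaussian elimination cancels these; the remaining $A$-summand at bidegree $(0, m(w)-n(w))$ is then isolated and splits off as $q^{m(w)-n(w)}A$. Setting $C_w$ to be the complementary $A$-complex, $\tilde{C}_w$ to be the image of the modified $G(\tcal{C}_w)$, and $S_w := C_w/\tilde{C}_w$, the decomposition computed in the previous paragraph identifies $S_w$ with the claimed sum of $A(1)$-summands (and, for $n(w)$ even, two additional $A$-summands at degree $-n(w)$).

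The main obstacle is controlling the cross-differentials between $G(\tcal{C}_w)$ and the quotient. In particular, one has to verify that the closed surgery $\omega \to \beta$ contributes exactly one identity entry usable for cancellation, so that precisely one pair of summands is removed and $\tilde{C}_w$ remains finitely generated free; here the grading constraint in Proposition~\ref{prp:niceproperalt}(2) (only morphisms of $q$-shift $0,1,2$ of the specified forms occur) together with property~(4) (uniqueness of the outgoing morphism from $\omega$) are essential to pin down the extension data and ensure the cancellation is canonical.
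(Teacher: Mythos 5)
Your proposal follows the paper's strategy closely: apply the closure functor (you call it $G=C_R\circ C_L$, the paper first applies $C_L$, simplifies, and then applies the right-closure functor it also calls $G$), deloop, compute the image of the quotient complex $q^{m(w)-n(w)}\mathcal{D}_{a^{-n(w)}}$ from Proposition~\ref{prp:niceproperalt}(5), observe that the alternating dotting differentials become $0$ and $2X-h$ in even/odd parity, and then perform Gaussian eliminations around the surgery $q^{-1}\alpha\to\omega$ and the outgoing surgery $\omega\to\beta$ to split off the single $q^{m(w)-n(w)}A$ summand. This is exactly the paper's argument, just with the two closure operations collapsed into one step; the Frobenius computation of the split and merge matrices (two identity entries for the split, one for the merge, leaving one $A$-summand at bidegree $(0,m(w)-n(w))$ isolated) matches what the paper carries out in its displayed diagrams leading to Figure~\ref{fig:theC_w}. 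Your closing remark correctly identifies the one point requiring care — that the Gaussian eliminations inside the quotient do not destroy the subcomplex structure, so that $\tilde{C}_w$ remains a subcomplex and $S_w$ a quotient; this is handled in the paper by tracking the gray arrows and observing that the cancellations only modify differentials into the subcomplex, never out of it.

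One small remark independent of your argument: your computation actually produces at degree $-1-2i$ the pair $u^{-1-2i}q^{m(w)-n(w)-4i}A(1)\oplus u^{-1-2i}q^{m(w)-n(w)-2-4i}A(1)$ (the two delooped summands of $q^{1-2k}\alpha$ are $q^{2-2k}A\oplus q^{-2k}A$, differing by $q^2$), which matches the paper's Figure~\ref{fig:theC_w} but not the displayed formula for $S_w$ in the statement, where both copies are written with the same $q$-shift. This looks like a typo in the proposition's statement rather than an error in either proof.
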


\begin{proof}
Let $\mathcal{C}_w$ be the cochain complex over $\Cob(D^3_3)$ from Proposition \ref{prp:niceproperalt}. We can visualize this complex as
\[
\begin{tikzpicture}
\node at (0,0) {$\cdots$};
\node at (1,0) {$q^{t-7}\alpha$};
\node at (3,0) {$q^{t-5}\alpha$};
\node at (5,0) {$q^{t-3}\alpha$};
\node at (7,0) {$q^{t-1}\alpha$};
\node at (9,0) {$q^t\omega$};
\node at (11,1) {$q^{t+1}\beta$};
\drawgray{11.5,1.1}{11.9,1.5}
\drawgray{10.1,1.5}{10.5,1.1}
\drawblack{9.4,0}{10.5,0.9}{sloped}{S}
\drawblack{7.5,0}{8.6,0}{}{S}
\drawblack{5.5,0}{6.5,0}{}{c}
\drawblack{3.5,0}{4.5,0}{}{d}
\drawblack{1.5,0}{2.5,0}{}{c}
\drawgray{1.5,0.1}{2,0.6}
\drawgray{3.5,0.1}{4,0.6}
\drawgray{5.5,0.1}{6,0.6}
\drawgray{7.5,0.1}{8,0.6}
\end{tikzpicture}
\]
where $t=m(w)-n(w)$ and we show parts of the quotient complex $q^{m(w)}\mathcal{D}_{a^{-n(w)}}$ as well as the one object being mapped to by $q^t\omega$. Gray arrows indicate morphisms into the subcomplex $\tcal{C}_w$.

Applying the functor $C_L\colon \Cob(D^3_3)\to \Cob(B^2_2)$ leads to the complex
\[
\begin{tikzpicture}
\node at (0,0) {$\cdots$};
\node at (1,0) {$q^{t-7}\tomega$};
\node at (3,0) {$q^{t-5}\tomega$};
\node at (5,0) {$q^{t-3}\tomega$};
\node at (7,0) {$q^{t-1}\tomega$};
\node at (9,0.5) {$q^{t+1}\tomega$};
\node at (9,-0.5) {$q^{t-1}\tomega$};
\node at (11,1) {$q^{t+2}\talpha$};
\node at (11,0) {$q^{t}\talpha$};
\drawgray{11.5,1.1}{11.9,1.5}
\drawgray{11.5,0.1}{11.9,0.5}
\drawgray{10.1,1.5}{10.5,1.1}
\drawgray{10.1,0.5}{10.5,0.1}
\drawblack{9.5,0.5}{10.5,0.9}{sloped}{S}
\drawblack{9.5,-0.5}{10.5,-0.1}{sloped}{S}
\drawblack{7.5,0.1}{8.5,0.5}{sloped}{)(-h\hspace{-22pt}\bullet}
\drawblack{7.5,-0.1}{8.5,-0.5}{sloped}{\id}
\drawblack{5.5,0}{6.5,0}{}{0}
\drawblack{3.5,0}{4.5,0}{}{e}
\drawblack{1.5,0}{2.5,0}{}{0}
\drawgray{1.5,0.1}{2,0.6}
\drawgray{3.5,0.1}{4,0.6}
\drawgray{5.5,0.1}{6,0.6}
\drawgray{7.5,0.2}{8,1}
\end{tikzpicture}
\]
Gaussian elimination of the identity morphism, then applying $G$ leads to the complex
\[
\begin{tikzpicture}
\node at (0,0.5) {$\cdots$};
\node at (1,1) {$q^{t-6}A$};
\node at (1,0) {$q^{t-8}A$};
\node at (3,1) {$q^{t-4}A$};
\node at (3,0) {$q^{t-6}A$};
\node at (5,1) {$q^{t-2}A$};
\node at (5,0) {$q^{t-4}A$};
\node at (7,0.5) {$0$};
\node at (9,1) {$q^{t+2}A$};
\node at (9,0) {$q^tA$};
\node at (11,1) {$q^{t+2}A$};
\node at (11,0) {$q^{t}A$};
\drawgray{1.5,1.1}{1.9,1.5}
\drawgray{1.5,0.1}{1.9,0.5}
\drawgray{3.5,1.1}{3.9,1.5}
\drawgray{3.5,0.1}{3.9,0.5}
\drawgray{5.5,1.1}{5.9,1.5}
\drawgray{5.5,0.1}{5.9,0.5}
\drawgray{11.5,1.1}{11.9,1.5}
\drawgray{11.5,0.1}{11.9,0.5}
\drawgray{10.1,1.5}{10.5,1.1}
\drawgray{10.1,0.5}{10.5,0.1}
\drawblack{3.5,1}{4.5,1}{}{2X-h}
\drawblack{3.5,0}{4.5,0}{}{2X-h}
\drawblack{9.5,1}{10.5,1}{}{\id}
\drawblack{9.5,0}{10.5,0.9}{sloped}{X}
\end{tikzpicture}
\]
We can perform one more Gaussian elimination here without creating new morphisms, and the result is the cochain complex pictured in Figure \ref{fig:theC_w}.
\begin{figure}[ht]
\begin{tikzpicture}
\node at (0,0.5) {$\cdots$};
\node at (1,1) {$q^{t-6}A$};
\node at (1,0) {$q^{t-8}A$};
\node at (3,1) {$q^{t-4}A$};
\node at (3,0) {$q^{t-6}A$};
\node at (5,1) {$q^{t-2}A$};
\node at (5,0) {$q^{t-4}A$};
\node at (7,0.5) {$0$};
\node at (9,0.5) {$q^tA$};
\node at (11,0.5) {$q^{t}A$};
\drawgray{1.5,1.1}{1.9,1.5}
\drawgray{1.5,0.1}{1.9,0.5}
\drawgray{3.5,1.1}{3.9,1.5}
\drawgray{3.5,0.1}{3.9,0.5}
\drawgray{5.5,1.1}{5.9,1.5}
\drawgray{5.5,0.1}{5.9,0.5}
\drawgray{11.5,0.6}{11.9,1}
\drawgray{10.1,1}{10.5,0.6}
\drawblack{3.5,1}{4.5,1}{}{2X-h}
\drawblack{3.5,0}{4.5,0}{}{2X-h}
\end{tikzpicture}
\caption{\label{fig:theC_w}The complex $C_w$ with a direct summand $q^tA$.}
\end{figure}
The second $q^tA$ object from the right has no further morphisms going in or out, and we call the remaining direct summand $C_w$. The objects to the left of the $0$ form the quotient complex $S_w$, which is a direct sum of complexes of type $A(1)$ and (for even $n(w)$) of type $A(0)$, suitably shifted.
\end{proof}

\begin{corollary}\label{cor:redKhovalt}
Let $w$ be a proper alternating word, and $L_w$ the braid closure of $w$ with basepoint on the middle strand. The reduced Khovanov homology of $L_w$ is free abelian and concentrated in bidegrees $(i, 2i+m(w)-n(w))$.
\end{corollary}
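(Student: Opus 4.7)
The strategy is to apply the reduction functor $R(C) := q^{-1}(C \otimes_A \mathbb{Z})$ (with $X$ and $h$ acting as $0$) to the decomposition of Proposition~\ref{prp:altBarNatan} and analyze the resulting chain complex. Writing $t = m(w) - n(w)$, the splitting $\CBN(L_w;\mathbb{Z}[h]) \simeq C_w \oplus q^tA$ reduces to $\rCKh(L_w;\mathbb{Z}) \simeq R(C_w) \oplus R(q^tA)$, and the summand $R(q^tA)$ is already a free $\mathbb{Z}$ concentrated in bidegree $(0,t)$, which lies on the target diagonal $j = 2i+t$. The task is therefore to show that $H^*(R(C_w))$ is free abelian and concentrated on the same diagonal.

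To handle $R(C_w)$, I would apply $R$ to the short exact sequence of free $A$-complexes $0 \to \tilde{C}_w \to C_w \to S_w \to 0$ from Proposition~\ref{prp:altBarNatan} to obtain a short exact sequence of $\mathbb{Z}$-complexes and its induced long exact sequence in homology. Next I would compute $H^*(R(S_w))$ explicitly from the given decomposition: since the differential $2X-h$ of each $A(1)$ vanishes under $R$, the complex $R(S_w)$ has zero differential, and its underlying bigraded $\mathbb{Z}$-module is read off immediately. A direct calculation shows that every class of $H^*(R(S_w))$ lies on the \emph{off}-diagonal $j = 2i + t + 2$, and a glance at Figure~\ref{fig:theC_w} shows that the generator at $(1,t)$ (coming from position $11$) lies on the opposite off-diagonal $j = 2i + t - 2$.

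The core of the argument is then to show that the long exact sequence ``absorbs'' these off-diagonal classes into $H^*(R(\tilde{C}_w))$: more precisely, that the connecting map $H^i(R(S_w)) \to H^{i+1}(R(\tilde{C}_w))$ is injective onto exactly the off-diagonal part of $H^{i+1}(R(\tilde{C}_w))$, while the diagonal part of $H^*(R(\tilde{C}_w))$ injects into $H^*(R(C_w))$. Combined with the $(1,t)$ class being cancelled in a similar manner, this forces $H^*(R(C_w))$ to be concentrated on $j = 2i + t$ and free abelian.

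The main obstacle is that $\tilde{C}_w$ does not come with an explicit direct-sum decomposition analogous to the one for $S_w$, so the cancellations above cannot be read off directly. My plan to overcome this is by induction on the number of syllables of $w$: the base case $w = a^{-n}b^m$ can be verified by hand from the tensor product $\mathcal{D}_{a^{-n}} \otimes \mathcal{D}_{b^m}$ (c.f.\ Figure~\ref{fig:altcomplex}), where the complex is small enough to compute $\rKh$ directly. For the inductive step, I would use the construction of $\mathcal{C}_{w'}$ from $\mathcal{C}_w$ given in the proof of Proposition~\ref{prp:niceproperalt}, together with the structural restrictions (1)--(4) of that proposition. The key simplification is that morphisms of $q$-shift $2$ (the dot/$h$ combinations) become zero under $R$, so only identities and surgeries contribute to the reduced differential, and one can track how these preserve the diagonal concentration when tensoring with an extra $\CBN(T_{a^{-1}};\mathbb{Z}[h])$ or $\CBN(T_b;\mathbb{Z}[h])$.
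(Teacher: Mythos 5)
Your proposal takes a completely different route from the paper, and it contains a genuine gap at its core. The paper proves this corollary in three sentences by invoking a known result: since $w$ is a proper alternating word, $L_w$ is a non-split alternating link, hence quasi-alternating, and by \cite[Thm 1]{MR2509750} the reduced Khovanov homology of a quasi-alternating link is free abelian and supported on a single diagonal $j=2i+\sigma(L_w)$. Proposition~\ref{prp:altBarNatan} then fixes the diagonal, because the direct summand $q^{m(w)-n(w)}A$ contributes a nontrivial class at $(0,m(w)-n(w))$, so $\sigma(L_w)=m(w)-n(w)$. You, by contrast, are attempting to re-derive thinness for this family from scratch out of Proposition~\ref{prp:altBarNatan} via the long exact sequence of $0\to\tilde{C}_w\to C_w\to S_w\to 0$ and an induction on syllables.

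The gap is exactly where you flag it: you have no control over $H^*(R(\tilde{C}_w))$, and the claim that the connecting map $H^i(R(S_w))\to H^{i+1}(R(\tilde{C}_w))$ is injective with image the off-diagonal part is precisely the hard content of the statement; nothing in Proposition~\ref{prp:altBarNatan} hands it to you. Carrying out your proposed syllable induction would amount to re-proving, by hand, the Lee/Manolescu--Ozsv\'ath thinness theorem for non-split alternating links, which is a substantial piece of work that the sketch does not reduce to routine bookkeeping (in particular, tensoring with $\CBN(T_{a^{-1}})$ or $\CBN(T_b)$ a priori produces both diagonal and off-diagonal generators, and establishing that the off-diagonal ones always cancel is the whole difficulty). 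There is also a small factual error: when $n(w)$ is even, the summand $u^{-n(w)}q^{m(w)-3n(w)}A$ of $S_w$ reduces to a class on the \emph{main} diagonal $j=2i+m(w)-n(w)$, not the off-diagonal, so your blanket assertion that $H^*(R(S_w))$ is entirely off-diagonal fails in that case and the intended cancellation pattern would need to be revised. The short route --- cite thinness, read off the signature from the $q^{m(w)-n(w)}A$ summand --- is both correct and far more economical.
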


\begin{proof}
Since $w$ is proper, $L_w$ is a non-split alternating link. By \cite[Thm 1]{MR2509750} the reduced Khovanov homology of $L_w$ is free abelian and concentrated in bidegrees $(i,2i+s)$, where $s$ is the signature of $L_w$. By Proposition \ref{prp:altBarNatan} we get a copy of $\Z$ in bidegree $(0,m(w)-n(w))$, hence $s=m(w)-n(w)$.
\end{proof}

\section{The Bar-Natan complex for words in $\Omega_6^+$}

Let $k\geq 1$ and $w$ a proper alternating word, so that $(ab)^kw\in \Omega_6^+$. Then
\[
\CBN(T_{(ab)^kw};\Z[h]) \simeq q^{6k}\mathcal{B}_{3k} \otimes \mathcal{C}_w,
\]
where $\mathcal{C}_w$ is the cochain complex from Proposition \ref{prp:niceproperalt}. Since the subcomplex $\tcal{C}_w$ of $\mathcal{C}_w$ contains no generators with smoothing $\omega$, we get
\[
q^{6k}\mathcal{B}_{3k} \otimes \tcal{C}_w \simeq u^{4k}q^{12k}\tcal{C}_w,
\]
by Lemma \ref{lm:threektensor} and the way the possible morphisms behave under the Gaussian eliminations. Also, for the quotient complex $\mathcal{C}_w/\tcal{C}_w$ tensoring with $\mathcal{B}_{3k}$ has the effect as in Section \ref{sec:omega_four}. In particular, we get the following combination of Proposition \ref{prp:formomega5} and Proposition \ref{prp:formomega4}.

\begin{proposition}\label{prp:formomega6}
Let $w$ be a proper alternating word and $k\geq 1$. Then the tangle complex $\CBN(T_{(ab)^{3k}w};\Z[h])$ is chain homotopy equivalent to a complex $\mathcal{C}_{k,w}$ of the form
\[
\begin{tikzpicture}[scale = 0.875, transform shape]
\node at (-0.5,1) {$\cdots$};
\node at (-2.4, 3.3) {$\cdots$};
\node at (0.8, 1.8) {$q^{s-5}\alpha$};
\node at (3.2, 1.8) {$q^{s-4}\gamma$};
\node at (5.6, 1.8) {$q^{s-2}\gamma$};
\node at (8, 1.8) {$q^{s-1}\alpha$};
\node at (0.8, 0.2) {$q^{s-5}\beta$};
\node at (3.2, 0.2) {$q^{s-4}\delta$};
\node at (5.6, 0.2) {$q^{s-2}\delta$};
\node at (8, 0.2) {$q^{s-1}\beta$};
\node at (10.4,1) {$q^{s+1}\beta$};
\node at (5.6, 3.3) {$q^{s-1}\alpha$};
\node at (3.2, 3.3) {$q^{s-3}\alpha$};
\node at (0.8, 3.3) {$q^{s-5}\alpha$};
\node at (-1.6, 3.3) {$q^{s-7}\alpha$};
\draw[->] (1.5, 1.8) -- node [above, scale = 0.7, very near start] {$-S$} (2.5, 1.8);
\draw[->] (1.5, 0.2) -- node [above, scale = 0.7] {$-S$} (2.5, 0.2);
\draw[->] (3.9, 1.8) -- node [above, scale = 0.7, very near start] {$d$} (4.9, 1.8);
\draw[->] (3.9, 0.2) -- node [above, scale = 0.7] {$d$} (4.9, 0.2);
\draw[->] (6.3, 1.8) -- node [above, scale = 0.7] {$-S$} (7.3, 1.8);
\draw[->] (6.3, 0.2) -- node [above, scale = 0.7] {$-S$} (7.3, 0.2);
\draw[->] (1.4, 0.4) -- node [above, scale = 0.7, near start, sloped] {$S$} (2.6, 1.6);
\draw[->] (3.8, 0.4) -- node [above, scale = 0.7, near start, sloped] {$D$} (5, 1.6);
\draw[->] (6.2, 0.4) -- node [above, scale = 0.7, near start, sloped] {$S$} (7.4, 1.6);
\draw[-, line width=6pt, color = white] (1.4, 1.6) -- (2.6, 0.4);
\draw[-, line width=6pt, color = white] (3.8, 1.6) -- (5, 0.4);
\draw[-, line width=6pt, color = white] (6.2, 1.6) -- (7.4, 0.4);
\draw[->] (1.4, 1.6) -- node [above, scale = 0.7, near start, sloped] {$S$} (2.6, 0.4);
\draw[->] (3.8, 1.6) -- node [above, scale = 0.7, near start, sloped] {$D$} (5, 0.4);
\draw[->] (6.2, 1.6) -- node [above, scale = 0.7, near end, sloped] {$S$} (7.4, 0.4);
\draw[->] (1.4, 3.3) -- node [above, scale = 0.7] {$d$} (2.6, 3.3);
\draw[->] (3.8,  3.3) -- node [above, scale = 0.7] {$c$} (5, 3.3);
\draw[->] (-1, 3.3) -- node [above, scale = 0.7] {$c$} (0.2, 3.3);
\drawblack{6.3, 3.2}{7.4, 2}{sloped}{\id} 
\drawblackw{3.8, 3.1}{5, 0.6}{sloped, very near end}{-S}
\drawblack{3.8, 3.2}{5, 2}{sloped}{-S}
\drawblackw{1.4, 3.1}{2.6, 0.6}{sloped, near start}{S}
\drawblack{-1,3.2}{0.2,2}{sloped}{-D}
\drawblack{-1,3.1}{0.2,0.6}{sloped}{-\bar{D}}
\drawblack{8.6,0.2}{9.9,0.9}{sloped}{d}
\drawblack{8.6,1.8}{9.9,1.1}{sloped}{D}
\drawgray{-1,3.4}{0,4}
\drawgray{1.4,3.4}{2.4,4}
\drawgray{3.8,3.4}{4.8,4}
\drawgray{6.2,3.4}{7.2,4}
\drawgray{10.9,1.1}{11.5,1.7}
\drawgray{9.4,2.4}{9.9,1.4}
\end{tikzpicture}
\]
which contains $u^{4k}q^{12k}\tcal{C}_w$ as a subcomplex. Here $s= 12k+m(w)-n(w)$.\hfill \qedsymbol
\end{proposition}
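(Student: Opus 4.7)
The plan is to combine the techniques of Section \ref{sec:omega_five} and Section \ref{sec:omega_four}, exploiting the short exact sequence from Proposition \ref{prp:niceproperalt}(5). By Theorem \ref{thm:torusbraid}(1) we have $\CBN(T_{(ab)^{3k}w};\Z[h]) \simeq q^{6k}\mathcal{B}_{3k}\otimes \mathcal{C}_w$, so the question reduces to analysing this tensor product, and I would split it along
\[
0 \longrightarrow \tcal{C}_w \longrightarrow \mathcal{C}_w \longrightarrow q^{m(w)-n(w)}\mathcal{D}_{a^{-n(w)}} \longrightarrow 0.
\]

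First I would handle the subcomplex. Since $\tcal{C}_w$ contains no generator with smoothing $\omega$ (property (3) of Proposition \ref{prp:niceproperalt}), every generator has smoothing in $\{\alpha,\beta,\gamma,\delta\}$, each of which is $\varepsilon\otimes\varepsilon^{\ast}$, $\zeta\otimes\zeta^{\ast}$, $\varepsilon\otimes\zeta^{\ast}$, or $\zeta\otimes\varepsilon^{\ast}$. Applying Lemma \ref{lm:threektensor} generator-by-generator collapses $\mathcal{B}_{3k}\otimes \tcal{C}_w$ to $u^{4k}q^{12k}\tcal{C}_w$, provided the morphisms of $\tcal{C}_w$ (identities, surgeries, and linear combinations of dottings and $h$, by property (2)) transport unchanged through the cancellations; this is essentially the analysis of $\tilde{d}_m$ carried out in the proof of Proposition \ref{prp:formomega5}.

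Next, for the quotient, tensoring $\mathcal{B}_{3k}$ with $\mathcal{D}_{a^{-n(w)}}$ reproduces verbatim the diagram treated in Section \ref{sec:omega_four}, so Proposition \ref{prp:formomega4} (with the grading shift $s = 12k+m(w)-n(w)$) yields the left and middle portion of $\mathcal{C}_{k,w}$ together with the top row, the cup/cap morphisms $c,d$, the double surgeries $D,\bar D$, and the saddles $\pm S$ shown in the statement. Finally, the unique additional morphism $q^{m(w)-n(w)}\omega\to q^{m(w)-n(w)+1}\beta$ from property (4), after tensoring with $\mathcal{B}_{3k}$ and the cancellations of Lemma \ref{lm:threektensor} inside $\mathcal{B}_{3k}\otimes \beta$, produces the rightmost object $q^{s+1}\beta$ with incoming maps $D$ and $d$ from the $q^{s-1}\alpha$ and $q^{s-1}\beta$ generators; this is exactly what was computed for the initial saddle of $\mathcal{D}_{b^l}$ in Proposition \ref{prp:formomega5}. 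Gluing the three pieces yields $\mathcal{C}_{k,w}$.

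The main obstacle is controlling the interaction between the three pieces. One has to verify that the Gaussian eliminations inside $\mathcal{B}_{3k}\otimes \tcal{C}_w$ and inside $\mathcal{B}_{3k}\otimes \mathcal{D}_{a^{-n(w)}}$ can be interleaved without introducing new morphisms beyond the gray arrows recorded in the statement, and similarly that the surgery from $\omega$ does not create further unexpected morphisms. The increasing-homological-degree property of Lemma \ref{lm:threektensor}, combined with property (2) of Proposition \ref{prp:niceproperalt}, ensures this: the surviving representative in each $\mathcal{B}_{3k}\otimes \chi$ collapse lies in a bounded homological range and its inward/outward morphisms are pinned down by the local form allowed in $\tcal{C}_w$. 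The remark following Proposition \ref{prp:formomega5} is essentially a warning about precisely this bookkeeping, which here must be discharged rather than dodged.
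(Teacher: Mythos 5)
Your proposal follows the same route the paper takes: reduce to $q^{6k}\mathcal{B}_{3k}\otimes\mathcal{C}_w$, split along the short exact sequence from Proposition~\ref{prp:niceproperalt}(5), collapse the subcomplex $\mathcal{B}_{3k}\otimes\tcal{C}_w$ to $u^{4k}q^{12k}\tcal{C}_w$ via Lemma~\ref{lm:threektensor}, import the quotient analysis from Proposition~\ref{prp:formomega4}, and handle the $\omega\to\beta$ saddle as in Proposition~\ref{prp:formomega5}. This matches the paper's (terse) justification essentially verbatim, including the identification of the gray-arrow bookkeeping as the remaining technical point.
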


The only object from $u^{4k}q^{12k}\tcal{C}_w$ visible is $q^{s+1}\beta$, and gray arrows indicate morphisms to this subcomplex. Note that the objects coming from $\mathcal{B}_{3k}$ do not have any morphisms into $q^{12k}\tcal{C}_w$, except for the two morphisms into $q^{s+1}\beta$.

The picture implicitly assumes $n(w)\geq 4$, but if $n(w)<4$ the behaviour is as in Proposition \ref{prp:formomega4}. Also, if $n(w) > 4$ there are morphisms from objects $q^{s-2t-1}\alpha$ for $t>3$ as in Proposition \ref{prp:formomega4} (as well as gray morphisms starting in those objects).

\begin{theorem}\label{thm:decoomega6}
Let $w$ be a proper alternating word, $k\geq 1$ and $L_{(ab)^{3k}w}$ the link closure associated to the braid word $(ab)^{3k}w$ with basepoint on the middle strand. Then
\[
\CBN(L_{(ab)^{3k}w};\Z[h]) \simeq \check{B}_{3k}\oplus D_w
\]
as $A$-cochain complexes, where
\[
\check{B}_{3k} \cong q^{s-6k-2} A \oplus \bigoplus_{m=0}^{k-2}u^{4m+2}q^{s-6(k-m)+2}A(1) \oplus \bigoplus_{m=1}^{k-2}u^{4m}q^{s-6(k-m)-2}A(2),
\]
if $k\geq 2$, and $\check{B}_3 = 0$. The complex $D_w$ fits into a short exact sequence of $A$-cochain complexes
\begin{equation}\label{eq:shortexactT}
0 \longrightarrow T \longrightarrow D_w \longrightarrow u^{4k}q^{12k}C_w\longrightarrow 0,
\end{equation}
where $C_w$ is the cochain complex from Proposition \ref{prp:altBarNatan}, and
\[
T \cong u^{4k-4}q^{s-8}A(2) \oplus u^{4k-2}q^{s-4}A(1) \oplus u^{4k}q^s A(1),
\]
if $k\geq 2$, and
\[
T \cong q^{s-8}A \oplus u^2q^{s-4}A(1) \oplus u^4q^s A(1),
\]
if $k = 1$. Here $s= 12k +m(w)-n(w)$.
\end{theorem}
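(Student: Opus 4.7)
The plan is to mirror the strategy used in Theorems \ref{thm:decoomega5} and \ref{thm:decoomega4}: start from $\mathcal{C}_{k,w}$ in Proposition \ref{prp:formomega6}, apply the closing functor $C_L\colon \Cob(D_3^3)\to \Cob(D_2^2)$, and then apply the functor $G$ obtained by composing $C_R$ with the identification of $\Cob(D_1^1)$ with $\mathfrak{Mod}^q_{\Z[X,h]/(X^2-Xh)}$. After a series of deloopings, Gaussian eliminations and changes of basis, the resulting $A$-cochain complex will split as $\check{B}_{3k}\oplus D_w$, and the non-split part $D_w$ will be identified using the short exact sequence (\ref{eq:shortexactT}).

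First I would isolate the subcomplex $u^{4k}q^{12k}\tcal{C}_w\subset \mathcal{C}_{k,w}$. Since $\tcal{C}_w$ contains no generator with smoothing $\omega$, Proposition \ref{prp:altBarNatan} (whose proof only uses $C_L$, $G$, and the structural properties (1)--(5) of Proposition \ref{prp:niceproperalt}) applies directly to show that after closing and applying $G$ this subcomplex produces a complex chain homotopy equivalent to $u^{4k}q^{12k}C_w$ (the extra direct summand $q^{m(w)-n(w)}A$ from Proposition \ref{prp:altBarNatan} comes from the $q^t\omega$ generator, which is absent here because $\tcal{C}_w$ contains no $\omega$). This produces the right-hand term of (\ref{eq:shortexactT}).

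Next I would handle the portion coming from $\mathcal{B}_{3k}$ away from its last few columns, exactly as in the proof of Theorem \ref{thm:khovhomtorus}(1). This part of the complex has no interaction with the subcomplex $u^{4k}q^{12k}\tcal{C}_w$ nor with the top row of $\alpha$-generators, and so it splits off as a direct summand producing $\check{B}_{3k}$. The apparent discrepancy with the $T(3,3k)$-formula (namely the missing summands $u^{4m+2}q^{s-6(k-m)+2}A(1)$ and $u^{4m}q^{s-6(k-m)-2}A(2)$ at $m=k-1$, and three missing copies of $A$ at $u^{4k}$) is accounted for by the fact that the final columns of $\mathcal{B}_{3k}$ now interact both with the top row $q^{s-2j-1}\alpha$ and with $q^{s+1}\beta$ in $\tcal{C}_w$. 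These interactions produce the complex $T$ (compare with the computation in the proof of Theorem \ref{thm:decoomega4} for the analogous top-row contribution), so after the Gaussian eliminations the remaining ``$A(2)$ at $m=k-1$'' piece, the ``$A(1)$ at $m=k-1$'' piece, together with a new $u^{4k}q^sA(1)$-piece arising from the morphism into $q^{s+1}\beta$ (i.e.\ the diagonal analogous to the $(2X-h)$-morphism in Figure \ref{fig:twoxminush}), combine to give the summand $T$.

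Finally I would assemble $D_w$ as the extension of $u^{4k}q^{12k}C_w$ by $T$ determined by the surviving coupling morphisms going from the $T$-part into the $u^{4k}q^{12k}C_w$-part. The key point is that these couplings cannot be cancelled by further Gaussian eliminations, since the remaining generators in $T$ live in different homological degrees than the $A$-type components of $C_w$; but their existence is exactly what produces a short exact sequence of cochain complexes rather than a direct sum splitting. The main obstacle will be bookkeeping: tracking through Proposition \ref{prp:formomega6} which Gaussian eliminations in the tensor product with the $\Omega_4^+$-like piece survive into morphisms coupling $T$ to $u^{4k}q^{12k}C_w$, and verifying that the only uncancelled couplings are those captured by the short exact sequence. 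The case $k=1$ must be treated separately since $\check{B}_3=0$ and the lowest-degree $A(2)$-piece of $T$ degenerates to an $A$-piece, mirroring the degeneracy in the $k=1$ case of Theorem \ref{thm:khovhomtorus}.
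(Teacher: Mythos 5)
Your outline follows the paper's general strategy (apply $C_L$, Gaussian--eliminate, apply $G$), and you correctly name the three summands of $T$, but the first step of your plan contains a real error that distorts the bookkeeping for the rest of the argument.

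You claim that applying $C_L$ and $G$ to the subcomplex $u^{4k}q^{12k}\tcal{C}_w\subset\mathcal{C}_{k,w}$ produces $u^{4k}q^{12k}C_w$, on the grounds that the split-off summand $q^{m(w)-n(w)}A$ in Proposition \ref{prp:altBarNatan} ``comes from the $q^t\omega$ generator,'' which is absent from $\tcal{C}_w$. But the quotient $\mathcal{C}_w/\tcal{C}_w$ is not a single copy of $\omega$: by Proposition \ref{prp:niceproperalt}(5) it is all of $q^{m(w)-n(w)}\mathcal{D}_{a^{-n(w)}}$, which has $n(w)$ copies of $\alpha$ in addition to the $\omega$. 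Deleting $\tcal{C}_w$ from $\mathcal{C}_w$ therefore removes far more than a single $q^{m(w)-n(w)}A$ summand; what $C_L$, $G$ and the subsequent cancellations actually send $\tcal{C}_w$ to is, roughly, $\tilde{C}_w$ --- the \emph{sub}complex of $C_w$ in Proposition \ref{prp:altBarNatan}'s short exact sequence --- and not $C_w$ itself. The missing quotient $S_w=C_w/\tilde{C}_w$ is produced by the $\mathcal{D}_{a^{-n(w)}}$-part of $\mathcal{C}_w$, which in $\mathcal{C}_{k,w}$ is exactly the top row of $\alpha$-generators in Proposition \ref{prp:formomega6}.

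This misallocation shows up immediately in your description of $T$: you route all the top-row $\alpha$'s into $T$, but after closing and delooping they contribute about $2n(w)$ objects, while $T$ has only six generators. In fact the delooped top-row objects sit above the dashed line in Figure \ref{fig:almostdone} and assemble, together with the image of $\tcal{C}_w$, into the quotient $u^{4k}q^{12k}C_w$ (its $S_w$-part and $\tilde{C}_w$-part, respectively). The subcomplex below the line, which yields $T$ after $\check{B}_{3k}$ is split off by the change-of-basis moves of Figure \ref{fig:htworeduced}, comes from the $\mathcal{B}_{3k}$-generators and their interaction with $q^{s+1}\beta$. Also, you state ``no interaction'' of the early $\mathcal{B}_{3k}$-columns with the top row; there are interactions, and the point (as in Theorem \ref{thm:decoomega4}) is that they can be removed by changes of basis without disturbing the quotient. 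Finally, a smaller slip: the coupling morphisms run from the $u^{4k}q^{12k}C_w$-part into $T$, not the other way round --- $T$ is the subcomplex of $D_w$, so the differential of $D_w$ carries quotient-generators into $T$, which is what the gray arrows in Figure \ref{fig:almostdone} record.
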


\begin{proof}
Let $\mathcal{C}_{k,w}$ be the cochain complex from Proposition \ref{prp:formomega6} and apply the functor $C_L$. The resulting complex looks as in
\[
\begin{tikzpicture}[scale = 0.875, transform shape]
\node at (-2.4, 1) {$\cdots$};
\node at (0.8, 1.8) {$q^{s-5}\tomega$};
\node at (-1.6, 1.8) {$q^{s-7}\tomega$};
\node at (-1.6, 0.2) {$q^{s-6}\talpha$};
\node at (-1.6,-1.4) {$q^{s-8}\talpha$};
\node at (3.2, 1.8) {$q^{s-4}\talpha$};
\node at (5.6, 1.8) {$q^{s-2}\talpha$};
\node at (8, 1.8) {$q^{s-1}\tomega$};
\node at (0.8, 0.2) {$q^{s-4}\talpha$};
\node at (3.2, 0.2) {$q^{s-4}\talpha$};
\node at (0.8, -1.4) {$q^{s-6}\talpha$};
\node at (5.6, 0.2) {$q^{s-2}\talpha$};
\node at (8, 0.2) {$q^{s}\talpha$};
\node at (8, -1.4) {$q^{s-2}\talpha$};
\node at (10.4,1) {$q^{s+2}\talpha$};
\node at (10.4,-0.6) {$q^{s}\talpha$};
\node at (5.6, 3.3) {$q^{s-1}\tomega$};
\node at (3.2, 3.3) {$q^{s-3}\tomega$};
\node at (0.8, 3.3) {$q^{s-5}\tomega$};
\node at (-1.6, 3.3) {$q^{s-7}\tomega$};
\drawgray{-1.1,-1.4}{0.3,-1.4}
\drawgray{-1.1,-1.3}{0.3,1.5}
\drawgray{-1.1, 0.2}{0.3, 0.2}
\drawgray{-1.1, 1.7}{0.3, 0.3}
\drawgray{-1.1, 3.1}{0.3, 0.4}
\drawblackw{-1.1, 0.3}{0.3, 1.7}{sloped, very near start}{S}
\drawblackw{-1.1, 1.6}{0.3, -1.3}{sloped, below}{S}
\drawblackw{-1.1, 3}{0.4, -1.1}{sloped, very near start}{-S}
\drawblack{-1.1, 3.2}{0.3,1.9}{sloped}{-D}
\drawblackw{-1.1, 1.8}{0.3,1.8}{}{e}
\drawblack{-1.1, 3.3}{0.3,3.3}{}{0}
\drawblack{1.3, 3.3}{2.7,3.3}{}{e}
\drawblack{1.3, 3.1}{2.7, 0.5}{sloped, near start}{S}
\drawblackw{1.3, -1.3}{2.7, 1.6}{sloped, very near start}{\bullet}
\drawblackw{1.3, -1.4}{2.7, 0.1}{sloped}{-\,\,\bullet}
\smcap{1.235}{-1}{0.4}{thick}
\smcup{1.855}{-0.6}{0.4}{thick}
\drawblackw{1.3, 1.7}{2.7, 0.3}{sloped, near start}{S}
\drawblackw{1.3, 1.8}{2.7, 1.8}{near end}{-S}
\drawblackw{1.3,0.3}{2.7,1.7}{sloped, near start}{\id}
\drawblackw{1.3, 0.2}{2.7, 0.2}{near start}{-\id}
\drawgray{3.7,0.2}{5.1,0.2}
\drawgray{3.7,0.3}{5.1,1.7}
\drawgray{3.7,1.7}{5.1,0.3}
\drawblack{3.7, 3.3}{5.1,3.3}{}{0}
\drawblack{3.7,3.2}{5.1,1.9}{sloped}{-S}
\drawgray{3.7,3.1}{5.1, 0.4}
\drawblack{3.7,1.8}{5.1,1.8}{}{d}
\drawblack{6.1,3.2}{7.5,1.9}{sloped}{\id}
\drawblack{6.1,1.8}{7.5,1.8}{}{-S}
\drawblack{6.1, 1.7}{7.5,0.3}{sloped, near start}{\bullet \,\, - h}
\drawblack{6.1, 1.6}{7.5, -1.3}{sloped, near end}{\id}
\drawblackw{6.1,0.3}{7.5,1.7}{sloped, very near start}{S}
\drawblackw{6.1,0.2}{7.5,0.2}{}{-\,\,\bullet \,\,+h}
\drawblack{6.1, 0.1}{7.5, -1.4}{sloped}{-\id}
\smcap{6.275}{1.5}{0.4}{thick}
\smcup{6.625}{0.235}{0.4}{thick}
\drawgray{8.5, 1.7}{9.9, 1.1}
\drawgray{8.5, 1.6}{10, -0.5}
\drawgray{8.5, -1.4}{10, -0.7}
\drawblack{8.5, 0.2}{9.9, 0.9}{sloped}{d}
\drawgray{10.8, -0.5}{11.4, 0}
\drawgray{10.9,1.1}{11.5,1.6}
\drawgray{-1,3.4}{0,4}
\drawgray{1.4,3.4}{2.4,4}
\drawgray{3.8,3.4}{4.8,4}
\drawgray{6.2,3.4}{7.2,4}
\drawgray{9.3,2.3}{9.9, 1.2}
\drawgray{9.4, 0.8}{10, -0.3}
\end{tikzpicture}
\]
We now perform Gaussian eliminations as in Section \ref{sec:toruslinks} (and, in particular, do not cancel the $\id$-morphism between the $q^{s-1}\tomega$ objects yet), resulting in the following complex.
\[
\begin{tikzpicture}[scale = 0.875, transform shape]
\node at (-2.4, 1.8) {$\cdots$};
\node at (0.8, 1.8) {$q^{s-5}\tomega$};
\node at (-1.6, 1.8) {$q^{s-7}\tomega$};
\node at (-1.6, 0.2) {$q^{s-6}\talpha$};
\node at (3.2, 1.8) {$q^{s-4}\talpha$};
\node at (5.6, 1.8) {$q^{s-2}\talpha$};
\node at (8, 1.8) {$q^{s-1}\tomega$};
\node at (0.8, 0.2) {$q^{s-6}\talpha$};
\node at (8, 0.2) {$q^{s}\talpha$};
\node at (10.4,1.9) {$q^{s+2}\talpha$};
\node at (10.4,0.2) {$q^{s}\talpha$};
\node at (5.6, 3.3) {$q^{s-1}\tomega$};
\node at (3.2, 3.3) {$q^{s-3}\tomega$};
\node at (0.8, 3.3) {$q^{s-5}\tomega$};
\node at (-1.6, 3.3) {$q^{s-7}\tomega$};
\drawgray{8.5, 1.8}{9.9, 1.8}
\drawgray{8.5, 1.7}{10, 0.3}
\drawblack{8.4, 0.2}{9.9, 1.7}{sloped}{d}
\drawgray{10.8, 0.3}{11.4, 0.8}
\drawgray{10.9,1.9}{11.5,2.4}
\drawgray{-1,3.4}{0,4}
\drawgray{1.4,3.4}{2.4,4}
\drawgray{3.8,3.4}{4.8,4}
\drawgray{6.2,3.4}{7.2,4}
\drawgray{9.3,3.1}{9.9, 2}
\drawgray{9.4, 1.6}{10, 0.5}
\drawblack{6.1, 3.2}{7.5, 1.9}{sloped}{\id}
\drawblack{6.1,1.8}{7.5, 1.8}{}{0}
\drawblack{6.1, 1.7}{7.6,0.3}{sloped}{c}
\drawblack{3.7, 1.8}{5.1, 1.8}{}{d}
\drawblack{3.7, 3.2}{5.1, 1.9}{sloped}{-S}
\drawblack{1.3, 3.3}{2.7, 3.3}{}{e}
\drawblack{1.3, 3.2}{2.7, 1.9}{sloped}{S}
\drawblack{1.3, 1.8}{2.7, 1.8}{}{0}
\drawblack{1.3, 0.3}{2.7, 1.7}{sloped}{c}
\drawblack{-1.1, 3.2}{0.3, 1.9}{sloped}{-D}
\drawblack{-1.1, 3.1}{0.3, 0.4}{sloped, near start}{-S}
\drawblackw{-1.1, 0.3}{0.3, 1.7}{sloped, near start}{S}
\drawblackw{-1.1, 1.8}{0.3,1.8}{near start}{e}
\drawblackw{-1.1, 1.7}{0.3, 0.3}{sloped, near start}{S}
\end{tikzpicture}
\]
We can now perform the Gaussian elimination on the $\id$ between the $q^{s-1}\tomega$, as this is not going to produce new morphisms. Applying the functor $G$ leads to the complex in Figure \ref{fig:almostdone}.
\begin{figure}[ht]
\begin{tikzpicture}[scale = 0.875, transform shape]
\node at (-1,3.5) {$\cdots$};
\node at (-1,1) {$\cdots$};
\node at (0,0) {$q^{s-8}A$};
\node at (0,1) {$q^{s-6}A$};
\node at (0,2) {$q^{s-6}A$};
\node at (0,3) {$q^{s-8}A$};
\node at (0,4) {$q^{s-6}A$};
\node at (2.4,0) {$q^{s-6}A$};
\node at (2.4,1) {$q^{s-4}A$};
\node at (2.4,2) {$q^{s-6}A$};
\node at (2.4,3) {$q^{s-6}A$};
\node at (2.4,4) {$q^{s-4}A$};
\node at (4.8,2) {$q^{s-4}A$};
\node at (4.8,3) {$q^{s-4}A$};
\node at (4.8,4) {$q^{s-2}A$};
\node at (7.2,2) {$q^{s-2}A$};
\node at (9.6,2) {$q^sA$};
\node at (12,2) {$q^{s+2}A$};
\node at (12,3) {$q^sA$};
\draw[dashed] (-1,2.5) -- (13,2.5);
\drawgray{0.6,3.9}{1.8,2.2}
\drawgray{0.6,3.8}{1.8, 1.1}
\drawgray{0.6,3.7}{1.8,0.2}
\drawgray{0.6,2.9}{1.8,2.1}
\drawgray{0.6,2.8}{1.8,0.1}
\drawgray{3,3.9}{4.2,2.2}
\drawgray{3,2.9}{4.2,2.1}
\drawgray{5.4,3.9}{6.6,2.2}
\drawgray{5.4,2.9}{6.6,2.1}
\drawblack{3,4}{4.2,4}{}{2X-h}
\drawblack{3,3}{4.2,3}{}{2X-h}
\drawgray{0.6,4.2}{1.1,4.5}
\drawgray{0.6,3.2}{1.1,3.5}
\drawgray{3,4.2}{3.5,4.5}
\drawgray{3,3.2}{3.5,3.5}
\drawgray{5.4,4.2}{5.9,4.5}
\drawgray{5.4,3.2}{5.9,3.5}
\drawgray{11.1,3.5}{11.6,3.2}
\drawgray{12.4,3.2}{12.9,3.5}
\drawgray{10.2,3.9}{11.4,2.2}
\drawblack{10,2}{11.4,2}{}{2X-h}
\drawblack{5.4,2}{6.6,2}{}{2X-h}
\drawblack{0.6,0}{1.8,0}{}{2X-h}
\drawblack{0.6,1.9}{1.8,1.1}{sloped, very near start}{X-h}
\drawblackw{0.6,0.2}{1.8,1.8}{sloped, very near start}{X}
\drawblackw{0.6,1.8}{1.8,0.2}{sloped, very near end}{\id}
\drawblackw{0.6,1.1}{1.8,1.9}{sloped, near end}{\id}
\drawblackw{0.6,1}{1.8,1}{near start, below}{2X-h}
\end{tikzpicture}
\caption{\label{fig:almostdone}The complex after applying $G$.}
\end{figure}

The objects below the dashed line form a subcomplex, while the objects above the line give rise to a quotient complex which is exactly $u^{4k}q^{12k}C_w$ from Proposition \ref{prp:altBarNatan}, compare Figure \ref{fig:theC_w}. The gray arrows from $u^{4k}q^{12k}C_w$ to the subcomplex below the dashed line have been worked out before, see Figures \ref{fig:twoxminush} and \ref{fig:htwodiag}.

We can still perform the Gaussian eliminations that were done to Figure \ref{fig:htwodiag} leading to Figure \ref{fig:htworeduced}. For $i\geq 1$, that is, the part of the subcomplex that is not visible, we can do the same change of basis moves (and which are not going to change the quotient complex $u^{4k}q^{12k}C_w$) that were done to Figure \ref{fig:htworeduced}, to split off the complex $\check{B}_{3k}$. The visible three sub-complexes in Figure \ref{fig:almostdone} give rise to $T$. Some of the arrows from $u^{4k}q^{12k}C_w$ to $T$ contain identities, but we do not need to do further cancellations, as we are only interested in a short exact sequence, which we now have after splitting off $\check{B}_{3k}$.
\end{proof}

\begin{corollary}\label{cor:properaltcase}
Let $w$ be a proper alternating word, $k\geq 1$ and $L$ the link closure associated to the braid word $(ab)^{3k}w$ with basepoint on the middle strand. Then
\[
\rKh^{i,j}(L;\Z) \cong \rKh^{i,j-t}(T(3,3k);\Z)\oplus \rKh^{i-4k,j-12k}(L_w;\Z),
\]
where $t= m(w)-n(w)$, except in bidegrees $(4k,12k-2+t)$, $(4k, 12k+t)$ and $(4k+1,12k+2+t)$, where
\begin{align*}
\rKh^{4k, 12k-2+t}(L;\Z) &\cong \, 0, \\
\rKh^{4k,12k+t}(L;\Z) &\cong \, \rKh^{0,t}(L_w;\Z), \\
\rKh^{4k+1,12k+2+t}(L;\Z) &\cong \, \rKh^{1,2+t}(L_w;\Z) \oplus \Z.
\end{align*}
Furthermore, all pages of the reduced integral BLT-spectral sequence are free abelian. For $k=1$ the spectral sequence collapses at the $E_2$-page and for $k\geq 2$ it collapses at the $E_3$-page.
\end{corollary}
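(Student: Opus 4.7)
The plan is to apply Theorem \ref{thm:decoomega6} and analyse the resulting pieces bidegree-by-bidegree. From that theorem, $\CBN(L;\Z[h])\simeq\check{B}_{3k}\oplus D_w$ as $A$-cochain complexes, where $\check{B}_{3k}$ is an explicit direct sum of shifted $A$, $A(1)$, and $A(2)$ summands and $D_w$ fits into the short exact sequence $0\to T\to D_w\to u^{4k}q^{12k}C_w\to 0$. After reducing (via the $q^{-1}$ shift and $X,h\mapsto 0$), this yields $\rKh^{i,j}(L;\Z)\cong\rKh^{i,j}(\check{B}_{3k})\oplus\rKh^{i,j}(D_w)$ together with a long exact sequence relating the reduced Khovanov groups of $T$, $D_w$, and $u^{4k}q^{12k}C_w$.

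The next step is to compute the relevant groups. Since the differentials $2X-h$ and $h^2=(2X-h)^2$ vanish modulo $X,h$, the reduced Khovanov of each summand $u^iq^jA$, $u^iq^jA(1)$, $u^iq^jA(2)$ is free and concentrated in one or two explicit bidegrees, so $\rKh(\check{B}_{3k})$ and $\rKh(T)$ are read off directly. Comparing with Theorem \ref{thm:khovhomtorus}(1) gives $q^t\CBN(T(3,3k);\Z[h])\simeq\check{B}_{3k}\oplus M$, where $M$ consists of the three free $A$ summands appearing in the torus link together with the $m=k-1$ terms of the $A(1)$ and $A(2)$ sums. By Corollary \ref{cor:redKhovalt} and Proposition \ref{prp:altBarNatan}, $\rKh(u^{4k}q^{12k}C_w)$ is concentrated on the diagonal $j=2i+t+4k$ for $i\geq 4k$ and agrees with the shift of $\rKh(L_w)$ except for one missing $\Z$ at $(4k,12k+t)$. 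Checking the six bidegrees where $\rKh(T)$ is supported against this diagonal, one sees that the source of the connecting homomorphism is always trivial (either $i<4k$ or the diagonal equation fails), so $\delta=0$ and $\rKh(D_w)\cong\rKh(T)\oplus\rKh(u^{4k}q^{12k}C_w)$. A bidegree-by-bidegree accounting then shows the shared $A(1)$ and $A(2)$ contributions from $M$ and $T$ cancel pairwise, while the three free $A$ summands in $M$, the extra $A(1)$ in $T$ at $u^{4k}q^{12k+t}$, and the missing $\Z_{(0,t)}$ in $C_w$ together produce exactly the three advertised corrections at $(4k,12k-2+t)$, $(4k,12k+t)$, and $(4k+1,12k+2+t)$.

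For the BLT spectral sequence statements, the $\CBN$-equivalence descends to a filtered equivalence of reduced BLT complexes, so the BLT spectral sequence of $L$ splits as the direct sum of those of $\check{B}_{3k}$ and $D_w$. The spectral sequence of $\check{B}_{3k}$ decomposes further by summand: $A$ is stable at $E_1$, $A(1)$ has $d_1$ an isomorphism and collapses at $E_2$, and $A(2)$ has $d_1=0$ (by bidegree) and $d_2$ an isomorphism, collapsing at $E_3$ --- all free on every page. For $D_w$, the short exact sequence induces a morphism of spectral sequences; since $d_r$ has bidegree $(1,2r)$ while $\rKh(u^{4k}q^{12k}C_w)$ remains confined to a diagonal, the same bidegree obstruction shows the connecting maps vanish on every page, giving $E_r(D_w)\cong E_r(T)\oplus E_r(u^{4k}q^{12k}C_w)$. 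The spectral sequence of $T$ is controlled by its $A$, $A(1)$, $A(2)$ summands, while that of $u^{4k}q^{12k}C_w$ collapses at $E_2$ with free pages since $L_w$ is alternating (higher $d_r$ leave the diagonal, and the reduced integral BLT cohomology of an alternating link is torsion-free). Thus for $k=1$ --- where $\check{B}_3=0$ and no $A(2)$ appears --- collapse is at $E_2$; for $k\geq 2$ the $A(2)$ summands force collapse at $E_3$. The hard part will be making the vanishing of the higher-page connecting maps $\delta_r$ for $D_w$ precise: the diagonal-versus-discrete bidegree argument remains valid, but verifying it rigorously requires tracking compatibility of the BLT filtration with the direct sum structure inherited from $E_1$.
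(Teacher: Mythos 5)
Your treatment of the reduced Khovanov computation is essentially the paper's: you invoke Theorem \ref{thm:decoomega6}, use the short exact sequence $0\to T\to D_w\to u^{4k}q^{12k}C_w\to 0$, compare $\check{B}_{3k}$ with $q^t\CBN(T(3,3k);\Z[h])$ via the additional summand $M$, and show by bidegree that the connecting map on $E_1$ vanishes and that the pieces account for the three corrections. This all matches the paper.

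Your spectral sequence argument, however, diverges from the paper in a way that leaves a genuine gap, and you flagged it yourself. You want to establish $E_r(D_w)\cong E_r(T)\oplus E_r(u^{4k}q^{12k}C_w)$ for all $r$, but a short exact sequence of filtered complexes does not yield a long exact sequence of $E_r$-pages for $r>1$; the long exact sequence is an $E_1$-level phenomenon, and the bidegree argument that kills the $E_1$ connecting map does not, on its own, show that higher differentials on $E_r(D_w)$ respect the direct sum. Invoking ``compatibility of the BLT filtration with the direct sum structure'' is not enough: $T$ is a subcomplex of $D_w$ but the sequence does not split as filtered complexes, so the $d_r$ on $E_r(D_w)$ may genuinely mix the $T$- and $C_w$-parts of the filtration quotients.

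The paper sidesteps this entirely. Having computed $\rKh(D_w;\Z)=E_1(D_w)$, observe that it is free abelian and concentrated on the diagonal $(i,t+4k+2i)$ except for a single extra $\Z$ at $(4k-3,s-4)$ when $k\geq 2$. Since $d_r$ has bidegree $(1,2r)$, the diagonal entries can only be connected by $d_1$; every $d_r$ with $r\geq 2$ starting or ending on the diagonal has source or target zero. The lone off-diagonal $\Z$ is untouched by $d_1$, so $E_2^{4k-3,s-4}\cong\Z$; as it sits in odd homological degree it cannot survive to $E_\infty$, and the only $d_r$ with non-zero source hitting it is $d_2$ from $(4k-4,s-8)$, which therefore surjects. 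Freeness of each $E_r$ and collapse at $E_2$ (for $k=1$) or $E_3$ (for $k\geq 2$) follow. You should replace your proposed splitting of $E_r(D_w)$ with this direct analysis of $E_1(D_w)$; it uses only data you have already computed and requires no compatibility claim about the filtration.
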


\begin{proof}
By Theorem \ref{thm:decoomega6} we have
\[
\rCKh(L;\Z) \cong \check{B}_{3k}\otimes_A\Z \oplus D_w\otimes_A\Z,
\]
where $X,h\in A$ act on $\Z$ as $0$. From Theorem \ref{thm:khovhomtorus} we get that $\check{B}_{3k}\otimes_A\Z$ calculates $\rKh^{i,j-t}(T(3,3k);\Z)$ for $i \leq 4k-5$. Notice that this is all the homology this complex has.

We have $H^{i,j}(T\otimes_A\Z)\cong \Z$ in bidegrees
\[
(4k-4,s-8), (4k-2,s-4), (4k-1,s-2), (4k,s), (4k+1,s+2),
\]
and also in $(4k-3,s-4)$ if $k\geq 2$. In all other bidegrees the homology is $0$. Here $s=12k+t$. In particular, we have $H^{i,j}(T\otimes_A\Z)\cong \rKh^{i,j-t}(T(3,3k);\Z)$ for $i=4k-4,\ldots,4k-1$.

Furthermore, $H^{4k+i,s+2i}(u^{4k}q^{12k}C_w\otimes_A\Z)$ is free abelian by Proposition \ref{prp:altBarNatan} and Corollary \ref{cor:redKhovalt}, and in all other bidegrees the homology vanishes. We therefore get short exact sequences
\[
0 \longrightarrow \Z \longrightarrow H^{4k+i,s+2i}(D_w\otimes_A\Z) \longrightarrow H^{i,t+2i}(C_w\otimes_A\Z) \longrightarrow 0,
\]
for $i = -4, -2, -1, 0, 1$, as well as
\[
0\longrightarrow \Z \longrightarrow H^{4k-3,s-4}(D_w\otimes_A\Z) \longrightarrow 0
\]
and $H^{i, j}(D_w\otimes_A\Z) \cong H^{i-4k,j-12k}(C_w\otimes_A\Z)$ in all other bidegrees. By Proposition \ref{prp:altBarNatan} $H^{i,j}(C_w\otimes_A\Z) \cong \rKh^{i,j}(L_w;\Z)$, except in bidegree $(0,t)$, where one copy of $\Z$ is missing. However, $H^{4k,s}(T\otimes_A\Z)$ provides us with a copy of $\Z$. Finally, $H^{4k+1,s+2}(T\otimes_A\Z)$ gives the extra copy of $\Z$ in bidegree $(4k+1,s+2)$.

Observe that in homological degrees $4k-4, \ldots, 4k-1$ the complex $D_w\otimes_A\Z$ has one extra copy of $\Z$ compared to $C_w\otimes_A\Z$. But these agree with the homology of $T(3,3k)$ in these degrees. However, in homological degree $4k$, the torus link has $3$ copies of $\Z$ which are not produced by $\check{B}_{3k}\otimes_A\Z$. In particular, in bidegree $(4k, 12k-2+t)$ is $0$.

This shows that all the reduced Khovanov homology groups of $L$ are free abelian and as stated. For the spectral sequence statement note that $\check{B}_{3k}$ gives rise to a spectral sequence with the required properties (it also collapses at the $E_2$-page for $k=1,2$). 

For the spectral sequence coming from $D_w$ note that all the non-zero groups of the $E_1$-page are on the diagonal $(i,t+4k+2i)$, except for a single copy of $\Z$ in bidegree $(4k-3,s-4)$ if $k\geq 2$. It then follows that $E_2^{4k-3,s-4}\cong \Z$, and since it is in an odd homological degree it cannot survive to the $E_\infty$-page. Similarly, no $E_2^{i,t+4k+2i}$ can contain torsion, as this would survive all the way to $E_\infty$. Note that if $E_\infty^{i,t+4k+2i}$ had torsion, we would need a non-zero $E_\infty^{i,t+4k+2i+2l}$ for some $l\geq 1$, which is not possible by the form of $E_1$. This shows the spectral sequence statement.
\end{proof}

\begin{proof}[Proof of Theorem \ref{thm:maintorsion} and \ref{thm:main_knight}]
Let $w\in \Omega^+_0\cup \Omega^+_1\cup \Omega^+_2\cup\Omega^+_3$. Then Theorems \ref{thm:maintorsion} and \ref{thm:main_knight} hold by Theorem \ref{thm:khovhomtorus} and Corollary \ref{cor:finalstep}. They also hold for the mirror links since Khovanov homology and Bar-Natan homology satisfy duality \cite{MR2232858}. The remaining cases for $w\in \Omega_0\cup \Omega_1\cup \Omega_2\cup\Omega_3$ involve $k=0$ and are either covered by Theorem \ref{thm:khovhomtorus} or trivial.

The cases $w\in \Omega_4\cup \Omega_5$ are handled similarly, using Theorem \ref{thm:decoomega5} and Theorem \ref{thm:decoomega4} together with Corollary \ref{cor:finalstep}. Finally, for $w\in \Omega_6$ we use Corollary \ref{cor:properaltcase} with Proposition \ref{prp:concludeconjectures}.
\end{proof}

\bibliography{KnotHomology}
\bibliographystyle{amsalpha}

\end{document}